\numberwithin{equation}{section}
\numberwithin{figure}{section}
\theoremstyle{plain}
\newtheorem{thm}{\protect\theoremname}[section]
  \theoremstyle{definition}
  \newtheorem{defn}[thm]{\protect\definitionname}
  \theoremstyle{plain}
  \newtheorem{lem}[thm]{\protect\lemmaname}
  \theoremstyle{remark}
  \newtheorem{rem}[thm]{\protect\remarkname}
  \theoremstyle{plain}
  \newtheorem{cor}[thm]{\protect\corollaryname}
  \theoremstyle{plain}
  \newtheorem{prop}[thm]{\protect\propositionname}
  \theoremstyle{definition}
  \newtheorem{example}[thm]{\protect\examplename}
  \providecommand{\corollaryname}{Corollary}
  \providecommand{\definitionname}{Definition}
  \providecommand{\examplename}{Example}
  \providecommand{\lemmaname}{Lemma}
  \providecommand{\propositionname}{Proposition}
  \providecommand{\remarkname}{Remark}
\providecommand{\theoremname}{Theorem}
\begin{document}

\title{Crossed Products of Banach Algebras. III.}
\begin{abstract}
In earlier work a crossed product of a Banach algebra was constructed
from a Banach algebra dynamical system $(A,G,\alpha)$ and a class
$\mathcal{R}$ of continuous covariant representations, and its representations
were determined. In this paper we adapt the theory to the ordered
context. We construct a pre-ordered crossed product of a Banach algebra
from a pre-ordered Banach algebra dynamical system $(A,G,\alpha)$
and a given uniformly bounded class $\mathcal{R}$ of continuous covariant
representations of $(A,G,\alpha)$. If $A$ has a positive bounded
approximate left identity and $\mathcal{R}$ consists of non-degenerate
continuous covariant representations, we establish a bijection between
the positive non-degenerate bounded representations of the pre-ordered
crossed product on pre-ordered Banach spaces with closed cones and
the positive non-degenerate $\mathcal{R}$-continuous covariant representations
of $(A,G,\alpha)$ on such spaces. Under mild conditions, we show
that this pre-ordered crossed product is the essentially unique pre-ordered
Banach algebra for which such a bijection exists. Finally, we study
pre-ordered generalized Beurling algebras. We show that they are bipositively
topologically isomorphic to pre-ordered crossed products of Banach
algebras associated with pre-ordered Banach algebra dynamical systems,
and hence the general theory allows us to describe their positive
representations on pre-ordered Banach spaces with closed cones.
\end{abstract}

\author{Marcel de Jeu}

\address{Marcel de Jeu, Mathematical Institute, Leiden University, P.O. Box
9512, 2300 RA Leiden, The Netherlands}

\email{mdejeu@math.leidenuniv.nl}

\author{Miek Messerschmidt}

\address{Miek Messerschmidt, Mathematical Institute, Leiden University, P.O.
Box 9512, 2300 RA Leiden, The Netherlands}

\email{mmesserschmidt@gmail.com}

\keywords{Crossed product, (pre-)ordered Banach algebra dynamical system, positive
representation in a (pre-)ordered Banach space, covariant representation,
(pre-)ordered generalized Beurling algebra}

\subjclass[2000]{Primary 47L65; Secondary 22D12, 22D15, 43A20, 46B40, 46L55,
47B65}

\maketitle

\global\long\def\crossedprod{(A\rtimes_{\alpha}G)^{\mathcal{R}}}
\global\long\def\leftcent{\mathcal{M}_{l}}
\global\long\def\BeurlingTypeAlg{L^{1}(G,A,\omega;\alpha)}

\section{Introduction}

This paper is a continuation of \cite{2011arXiv1104.5151D} and \cite{CPII},
where, inspired by the theory of crossed products of $C^{*}$-algebras,
the theory of crossed products of Banach algebras is developed. The
lack of the convenient rigidity that $C^{*}$-algebras provide, where,
e.g., morphisms are automatically continuous and even contractive,
makes the task of developing the basics more laborious than it is
for crossed products of $C^{*}$-algebras. 

The paper \cite{2011arXiv1104.5151D} is for a large part concerned
with one result: the General Correspondence Theorem \cite[Theorem 8.1]{2011arXiv1104.5151D},
most of which is formulated as Theorem \ref{thm:General-Correspondence-Theorem}
below. With $(A,G,\alpha)$ a Banach algebra dynamical system and
$\mathcal{R}$ a uniformly bounded class of non-degenerate continuous
covariant representations of $(A,G,\alpha)$ on Banach spaces -- all
notions will be reviewed in Section \ref{sec:Preliminaries-and-recapitulation}
-- the General Correspondence Theorem, in the presence of a bounded
approximate left identity of $A$, yields a bijection between the
non-degenerate $\mathcal{R}$-continuous covariant representations
of $(A,G,\alpha)$, and the non-degenerate bounded representations
of the crossed product Banach algebra $\crossedprod$ associated with
$(A,G,\alpha)$ and $\mathcal{R}$.

In \cite{CPII} the theory established in \cite{2011arXiv1104.5151D}
is developed further. Amongst others, there it is shown that (under
mild conditions) the crossed product $\crossedprod$ is the unique
Banach algebra, up to topological isomorphism, which ``generates''
all non-degenerate $\mathcal{R}$-continuous covariant representations
of $(A,G,\alpha)$ \cite[Theorem 4.4]{CPII}. Furthermore, given a
weight $\omega$ on $G$ and assuming $\alpha$ is uniformly bounded,
for a particular choice of $\mathcal{R}$ it is shown that the crossed
product $\crossedprod$ is topologically isomorphic to a generalized
Beurling algebra $\BeurlingTypeAlg$ \cite[Section 5]{CPII}. These
algebras, as introduced in \cite{CPII}, are weighted Banach spaces
of (equivalence classes) of $A$-valued functions that are also associative
algebras with a multiplication that is continuous in both variables,
but they are not Banach algebras in general, since the norm need not
be submultiplicative. The General Correspondence Theorem then provides
a bijection between the non-degenerate continuous covariant representations
of $(A,G,\alpha)$, of which the representation of $G$ is bounded
by a multiple of $\omega$, and the non-degenerate bounded representations
of $\BeurlingTypeAlg$ \cite[Theorem 5.20]{CPII}. When $A$ is taken
to be the scalars, generalized Beurling algebras reduce to classical
Beurling algebras, which are true Banach algebras, and then \cite[Corollary 5.22]{CPII}
describes their non-degenerate bounded representations. In the case
where $\omega=1$ as well, this specializes to the classical bijection
between uniformly bounded representations of $G$ on Banach spaces
and non-degenerate bounded representations of $L^{1}(G)$ (cf.\,\cite[Assertion VI.1.32]{Helemski}).

In the current paper we adapt the theory developed in \cite{2011arXiv1104.5151D}
and \cite{CPII} to the ordered context: that of pre-ordered Banach
spaces and algebras. Apart from its intrinsic interest, this is also
motivated by the proven relevance of crossed products of $C^{*}$-algebras
for unitary group representations. As is well known, a decomposition
of a general unitary group representation into a direct integral of
irreducible unitary representations is obtained via the group $C^{*}$-algebra
(a particularly simple crossed product), and Mackey's Imprimitivity
Theorem can, by Rieffel's work, now be conceptually interpreted in
terms of (strong) Morita equivalence of a crossed product of a $C^{*}$-algebra
and a group $C^{*}$-algebra. We hope that the results in the present
paper will contribute to similar developments in the theory of positive
representations of groups on pre-ordered Banach spaces (and Banach
lattices in particular), which exist in abundance.

We are mainly concerned with four topics: Firstly, an adaptation of
the construction of crossed products of Banach algebras from \cite{2011arXiv1104.5151D}
to the ordered context (cf.\,Section \ref{sec:Pre-ordered-Banach-algebra-dynamical-systems-and-crossed-prods}).
Secondly, proving a version of the General Correspondence Theorem
in this context (cf.\,Theorem \ref{thm:Ordered-general-correspondence}).
Thirdly, for a pre-ordered Banach algebra dynamical system $(A,G,\alpha)$
and uniformly bounded class of positive continuous covariant representations
$\mathcal{R}$, we establish (under mild conditions) the uniqueness,
up to bipositive topological isomorphism, of the associated pre-ordered
crossed product $\crossedprod$ as the unique pre-ordered Banach algebra
which ``generates'' all positive non-degenerate $\mathcal{R}$-continuous
covariant representations of $(A,G,\alpha)$ (cf.\,Theorem \ref{thm:universal-property}).
And fourthly, we describe the positive non-degenerate bounded representations
of a pre-ordered Beurling algebra $\BeurlingTypeAlg$ in terms of
positive non-degenerate continuous covariant representations of the
pre-ordered Banach algebra dynamical system $(A,G,\alpha)$ to which
$\BeurlingTypeAlg$ is associated (cf.\,Section \ref{sec:Pre-ordered-generalized-Beurling}).

We now briefly describe the structure of the paper.

Section \ref{sec:Preliminaries-and-recapitulation} contains all preliminary
definitions and results concerning pre-ordered vector spaces and crossed
products. Specifically, Sections \ref{sub:Ordered-vector-spaces}--\ref{sub:positive-Representations-of-groups-and-algebras}
provide preliminary definitions and results concerning pre-ordered
vector spaces and algebras and pre-ordered normed spaces and algebras.
Some of the material is completely standard and/or elementary, but
since the fields of representation theory and positivity seem to be
somewhat disjoint, we have included it in an attempt to enhance the
accessibility of this paper, which draws on both disciplines. Sections
\ref{sub:unordered-basic-definitions} and \ref{sub:classical-Correspondence}
provide a brief recapitulation of all relevant notions from \cite{2011arXiv1104.5151D}
relating to Banach algebra dynamical systems and crossed products.

In Section \ref{sec:Pre-ordered-Banach-algebra-dynamical-systems-and-crossed-prods}
we define pre-ordered Banach algebra dynamical systems and provide
the construction of pre-ordered crossed products associated with such
systems. The construction is largely the same as in the general unordered
case, but differs in keeping track of how order structures of $(A,G,\alpha)$
and $\mathcal{R}$ induce a natural cone, denoted $\crossedprod_{+}$,
which defines a pre-order on the crossed product $\crossedprod$.
Theorem \ref{thm:ordered-crossed-prod-properties} collects properties
of the cone $\crossedprod_{+}$ (and thereby the order structure)
of a pre-ordered crossed product $\crossedprod$ in terms of the order
properties of the pre-ordered Banach algebra dynamical system $(A,G,\alpha)$
and uniformly bounded class $\mathcal{R}$ of continuous covariant
representations. Finally, we adapt the General Correspondence Theorem
to the ordered context. In the presence of a positive bounded approximate
left identity of $A$, it gives a canonical bijection between the
positive non-degenerate $\mathcal{R}$-continuous covariant representations
of $(A,G,\alpha)$ on pre-ordered Banach spaces with closed cones,
and positive non-degenerate bounded representations of the pre-ordered
crossed product $\crossedprod$ on such spaces (cf.\,Theorem \ref{thm:Ordered-general-correspondence}).

Paralleling work of Raeburn's \cite{RaeburnOriginalUniversalPaper},
in Section \ref{sec:Uniqueness-of-the-crossed-prod} we show that
(under mild additional hypotheses) the pre-ordered crossed product
$\crossedprod$ associated with $(A,G,\alpha)$ and $\mathcal{R}$
is the unique pre-ordered Banach algebra, up to bipositive topological
isomorphism, which ``generates'' all positive non-degenerate $\mathcal{R}$-continuous
covariant representations of $(A,G,\alpha)$ (cf.\,Theorem \ref{thm:universal-property}).

Finally, in Section \ref{sec:Pre-ordered-generalized-Beurling}, we
will study pre-ordered generalized Beurling algebras $\BeurlingTypeAlg$.
These algebras can be defined for any pre-ordered Banach algebra dynamical
system $(A,G,\alpha)$ and weight $\omega$ on $G$, provided that
$\alpha$ is uniformly bounded. If $A$ has a bounded approximate
right identity, for a specific choice of $\mathcal{R}$ the pre-ordered
crossed product $\crossedprod$ is shown to be bipositively topologically
isomorphic to a pre-ordered generalized Beurling algebra $\BeurlingTypeAlg$
(cf.\,Theorem \ref{thm:Choosing-R-correctly-Crossed-Products-are-beurling}).
In the presence of a positive bounded approximate left identity of
$A$, our ordered version of the General Correspondence Theorem, Theorem
\ref{thm:Ordered-general-correspondence}, then provides a bijection
between the positive non-degenerate bounded representations of $\BeurlingTypeAlg$
and the positive non-degenerate continuous covariant representations
of the pre-ordered Banach algebra dynamical system $(A,G,\alpha)$,
where the representation of the group $G$ is bounded by a multiple
of $\omega$ (cf.\,Theorem \ref{thm:continuous-non-deg-covars-are-R-continuous}).
In the case where $A$ is a Banach lattice algebra, it is shown that
$\BeurlingTypeAlg$ also becomes a Banach lattice (although it is
not generally a Banach algebra), and, under further conditions, becomes
a Banach lattice algebra (cf.\,Theorem \ref{thm:order-structures-of-beurling}).
In the simplest case, where $A$ is taken to be the real numbers and
$\omega=1$, our results reduce to a bijection between the positive
strongly continuous uniformly bounded representations of $G$ on pre-ordered
Banach spaces with closed cones on the one hand, and the positive
non-degenerate bounded representations of $L^{1}(G)$ on such spaces
on the other hand; this also follows from \cite[Assertion VI.1.32]{Helemski}.

\section{Preliminaries and recapitulation\label{sec:Preliminaries-and-recapitulation}}

In this section we will introduce the terminology and notation used
in the rest of the paper and give a brief recapitulation of Banach
algebra dynamical systems and their crossed products. Sections \ref{sub:Ordered-vector-spaces}--\ref{sub:positive-Representations-of-groups-and-algebras}
will introduce general notions concerning pre-ordered (normed) vector
spaces and algebras. Sections \ref{sub:unordered-basic-definitions}
and \ref{sub:classical-Correspondence} will give a brief overview
of results from \cite{2011arXiv1104.5151D} on Banach algebra dynamical
systems and their crossed products.

Throughout this paper all vector spaces are assumed to be over the
reals, and all locally compact topologies are assumed to be Hausdorff. 

Let $X$ and $Y$ be normed spaces. The normed space of bounded linear
operators from $X$ to $Y$ will be denoted by $B(X,Y)$, and by $B(X)$
if $X=Y$. The group of invertible elements in $B(X)$ will be denoted
by $\mbox{Inv}(X)$. If $A$ is a normed algebra, by $\mbox{Aut}(A)$
we will denote its group of bounded automorphisms. We do not assume
algebras to be unital.

For a locally compact topological space $\Omega$ and topological
vector space $V$, we will denote the space of all continuous compactly
supported functions on $\Omega$ taking values in $V$ by $C_{c}(\Omega,V)$.
If $V=\mathbb{R}$, we write $C_{c}(\Omega)$ for $C_{c}(\Omega,\mathbb{R})$.

If $G$ is a locally compact group, we will denote its identity element
by $e\in G$. For $f\in C_{c}(G)$, we will write $\int_{G}f(s)\, ds$
for the integral of $f$ with respect to a fixed left Haar measure
$\mu$ on $G$.

\subsection{Pre-ordered vector spaces and algebras\label{sub:Ordered-vector-spaces}}

We introduce the following terminology.

Let $V$ be a vector space. A subset $C\subseteq V$ will be called
a \emph{cone }if $C+C\subseteq C$ and $\lambda C\subseteq C$ for
all $\lambda\geq0$. The pair $(V,C)$ will be called a \emph{pre-ordered
vector space} and, for $x,y\in V$, by $y\leq x$ we mean $x-y\in C$.
Elements of $C$ will be called \emph{positive.} We will often suppress
mention of the cone $C$, and merely say that $V$ is a pre-ordered
vector space. In this case, we will denote the implicit cone by $V_{+}$
and refer to it as \emph{the cone of $V$}. A cone $C\subseteq V$
will be said to be a \emph{proper cone }if $C\cap(-C)=\{0\}$, in
which case $\leq$ is a partial order, and then \emph{$(V,C)$ }will
be called\emph{ }an\emph{ ordered vector space.} A cone $C\subseteq V$
will be said to be \emph{generating} (\emph{in $V$}) if $V=C-C$.
If $(V,C)$ is a pre-ordered vector space and $V$ is also an associative
algebra such that $C\cdot C\subseteq C$, we will say $(V,C)$ is
a \emph{pre-ordered algebra.}

If $(V_{1},C_{1})$ and $(V_{2},C_{2})$ are pre-ordered vector spaces,
we will say a linear map $T:V_{1}\to V_{2}$ is \emph{positive} if
$TC_{1}\subseteq C_{2}$. If $T$ is injective and both $TC_{1}\subseteq C_{2}$
and $T^{-1}C_{2}\subseteq C_{1}$ hold, we will say $T$ is \emph{bipositive.}

With $W\subseteq V$ a subspace and $q:V\to V/W$ the quotient map,
$q(C)\subseteq V/W$ will be called the \emph{quotient cone}. Then
$(V/W,q(C))$ is a pre-ordered vector space. Clearly $q:V\to V/W$
is positive and $q(C)$ is generating in $V/W$ if $C$ is generating
in $V$.

\subsection{Pre-ordered normed spaces and algebras\label{sub:Pre-ordered-normed-spaces-and-algebras}}

We give a brief description of pre-ordered normed vector spaces and
algebras. In Section \ref{sec:Pre-ordered-Banach-algebra-dynamical-systems-and-crossed-prods}
we will apply the results from this section to describe the order
structure of crossed products associated with pre-ordered Banach algebra
dynamical systems.

If $A$ is a pre-ordered algebra that is also a normed algebra, then
we will call $A$ a \emph{pre-ordered normed algebra,} and a \emph{pre-ordered
Banach algebra} if $A$ is complete. The \emph{positive automorphism
group }(\emph{of $A$}) is defined by $\mbox{Aut}_{+}(A):=\{\alpha\in\mbox{Aut}(A):\alpha^{\pm1}(A_{+})\subseteq A_{+}\}\subseteq B(A)$.

Let $X$ and $Y$ be pre-ordered normed spaces. We will always assume
that $B(X,Y)$ is endowed with the \emph{natural operator cone} $B(X,Y)_{+}:=\{T\in B(X,Y):TX_{+}\subseteq Y_{+}\}$,
so that $B(X,Y)$ is a pre-ordered normed space, and $B(X)$ is a
pre-ordered normed algebra. We define the \emph{group of bipositive
invertible operators }on $X$ by $\mbox{Inv}_{+}(X):=\{T\in\mbox{Inv}(X):T^{\pm1}X_{+}\subseteq X_{+}\}$.
We will say $X_{+}$ is \emph{topologically generating} (\emph{in
$X$}) if $X=\overline{X_{+}-X_{+}}$. If the ordering defined by
$X_{+}$ is a lattice-ordering (i.e., if every pair of elements from
$X$ has a supremum, denoted by $\vee$) we will call $X$ a \emph{normed
vector lattice} if $|x|\leq|y|$ implies $\|x\|\leq\|y\|$ for all
$x,y\in X$, where $|x|:=x\vee(-x)$. A complete normed vector lattice
will be called a \emph{Banach lattice}. A pre-ordered Banach algebra
that is also a Banach lattice will be called a \emph{Banach lattice
algebra. }A subspace $Y\subseteq X$ in a vector lattice $X$ is called
an \emph{order ideal }if, for $g\in Y$ and $f\in X$, $|f|\leq|g|$
implies $f\in Y$.

We will need completions of pre-ordered normed spaces in Section \ref{sec:Pre-ordered-Banach-algebra-dynamical-systems-and-crossed-prods},
to be able to describe pre-ordered crossed products associated with
pre-ordered Banach algebra dynamical systems.
\begin{defn}
\label{def:completion-or-ordered-normed-space}Let $V$ be a pre-ordered
normed space. We define \emph{the completion of $V$ }by $(\overline{V},\overline{V_{+}})$,
where $\overline{V}$ denotes the usual metric completion of the normed
space $V$, and $\overline{V_{+}}$ the closure of $V_{+}$ in $\overline{V}$.
\end{defn}
The following two elementary observations are included for later reference.
\begin{lem}
\label{lem:bounded-positive-operator-on-ordered-noremed-space-implies-extension-is-positive}Let
$V$ be a pre-ordered normed space, $X$ a pre-ordered Banach space
with a closed cone, and $T:V\to X$ a positive bounded linear operator.
Then the bounded extension of $T$ to the completion of $V$ is a
positive operator.
\end{lem}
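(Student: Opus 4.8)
The plan is to exploit the continuity of the operators involved together with the assumption that the cone $X_+$ is closed, so that positivity is preserved under taking limits. Concretely, let $\widetilde{T}:\overline{V}\to X$ denote the unique bounded extension of $T$ to the completion $\overline{V}$; its existence and boundedness are the standard facts about extending a bounded linear operator from a dense subspace into a Banach space, and I would simply invoke these. The goal is then to show $\widetilde{T}\,\overline{V_+}\subseteq X_+$, where by Definition \ref{def:completion-or-ordered-normed-space} the cone of $\overline{V}$ is $\overline{V_+}$, the closure of $V_+$ in $\overline{V}$.

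First I would fix an arbitrary element $w\in\overline{V_+}$ and, using the definition of $\overline{V_+}$ as a closure, choose a sequence $(v_n)$ in $V_+$ with $v_n\to w$ in $\overline{V}$. Since $T$ is positive we have $Tv_n=\widetilde{T}v_n\in X_+$ for every $n$. Next I would use the continuity of $\widetilde{T}$ to pass to the limit: $\widetilde{T}v_n\to\widetilde{T}w$ in $X$. Thus $\widetilde{T}w$ is a limit in $X$ of a sequence lying in $X_+$. Because $X_+$ is assumed to be a closed cone in $X$, this limit again belongs to $X_+$, so $\widetilde{T}w\in X_+$. As $w\in\overline{V_+}$ was arbitrary, this establishes $\widetilde{T}\,\overline{V_+}\subseteq X_+$, which is precisely the statement that $\widetilde{T}$ is positive.

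This argument is entirely routine and I do not anticipate any genuine obstacle; the only point requiring care is that both hypotheses are used exactly once and in an essential way — the closedness of $X_+$ to take the limit inside the cone, and the definition of the completion's cone as the \emph{closure} of $V_+$ to guarantee the approximating sequence can be chosen in $V_+$. Since the map $V\to\overline{V}$ identifies $V$ with a dense subspace on which $\widetilde{T}$ restricts to $T$, there is no issue in regarding $v_n$ simultaneously as elements of $V$ (where $Tv_n\in X_+$) and of $\overline{V}$ (where $v_n\to w$). I would remark that completeness of $\overline{V}$ is not actually needed for the conclusion, only completeness of the target $X$ together with closedness of $X_+$; the statement is phrased for the completion merely because that is the context in which it will be applied.
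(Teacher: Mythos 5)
Your proof is correct: the paper states this lemma without proof as an elementary observation, and your argument (approximate $w\in\overline{V_+}$ by a sequence in $V_+$, apply positivity of $T$, pass to the limit by continuity of the extension, and use closedness of $X_+$) is exactly the standard argument the authors intend. Your closing remark is also accurate — only completeness of $X$ and closedness of $X_+$ are needed, and the definition of the completion's cone as the closure of $V_+$ is what makes the approximation step work.
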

{}
\begin{lem}
\label{lem:completion-of-normed-algebra-isbanach-algebra}If $V$
is a pre-ordered normed algebra, then its completion is a pre-ordered
Banach algebra with a closed cone.
\end{lem}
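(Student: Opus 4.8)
The plan is to verify the two defining properties of a pre-ordered Banach algebra separately, since completion of a normed algebra preserves each ingredient more or less by continuity. First I would recall that the metric completion $\overline{V}$ of a normed algebra $V$ is a Banach algebra: the multiplication $m\colon V\times V\to V$ is continuous and bilinear, hence uniformly continuous on bounded sets, so it extends uniquely to a continuous bilinear map $\overline{m}\colon \overline{V}\times\overline{V}\to\overline{V}$. Associativity and the submultiplicativity of the norm pass to the extension by a routine density argument, since both are closed conditions. This is standard, so I would state it briefly and spend the real effort on the interaction with the cone.

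Next I would equip $\overline{V}$ with the cone $\overline{V_+}$, the norm-closure of $V_+$ in $\overline{V}$, exactly as in Definition~\ref{def:completion-or-ordered-normed-space}. That this is a cone is immediate: it is the closure of a cone, and closures of cones are cones because addition and nonnegative scalar multiplication are continuous, so $\overline{V_+}+\overline{V_+}\subseteq\overline{V_+}$ and $\lambda\overline{V_+}\subseteq\overline{V_+}$ for $\lambda\geq 0$ follow from continuity together with the corresponding inclusions for $V_+$. By construction $\overline{V_+}$ is closed, which delivers the ``closed cone'' clause of the statement. Thus $(\overline{V},\overline{V_+})$ is a pre-ordered Banach space with a closed cone.

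The one genuine point to check is the compatibility condition $\overline{V_+}\cdot\overline{V_+}\subseteq\overline{V_+}$, which makes $(\overline{V},\overline{V_+})$ a pre-ordered \emph{algebra}. Here I would use that $V_+\cdot V_+\subseteq V_+$ (as $V$ is a pre-ordered algebra) and propagate this through the extended multiplication by continuity. Concretely, for fixed $x\in V_+$ the map $y\mapsto \overline{m}(x,y)$ is continuous and sends $V_+$ into $V_+\subseteq\overline{V_+}$; since $\overline{V_+}$ is closed, it sends the closure $\overline{V_+}$ into $\overline{V_+}$. Then for fixed $y\in\overline{V_+}$ the map $x\mapsto\overline{m}(x,y)$ is continuous and sends $V_+$ into $\overline{V_+}$ by the previous step, so again by closedness it sends $\overline{V_+}$ into $\overline{V_+}$. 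Combining the two approximation steps yields $\overline{m}(\overline{V_+},\overline{V_+})\subseteq\overline{V_+}$, as required.

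The main (and essentially only) obstacle is this last cone-multiplicativity argument, where one must take closures in \emph{both} arguments of the bilinear multiplication; doing it naively at once is awkward, but the two-step ``fix one variable, approximate the other'' procedure above circumvents any difficulty, relying only on separate continuity of the extended product and the closedness of $\overline{V_+}$. Everything else is the standard completion of a normed algebra combined with the trivial fact that the closure of a cone is a closed cone, so no further subtleties arise.
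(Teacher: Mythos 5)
Your proof is correct. The paper in fact gives no proof of this lemma at all: it is listed among the ``elementary observations... included for later reference,'' so your writeup simply supplies the routine details the authors chose to omit, and all the ingredients you use (the standard completion of a normed algebra, the fact that the closure of a cone is a closed cone, and a density argument for $\overline{V_+}\cdot\overline{V_+}\subseteq\overline{V_+}$) are exactly what is needed. One minor remark: your worry that the ``naive'' simultaneous approximation is awkward is unfounded. Since the extended multiplication is jointly continuous (it is bounded bilinear), if $x_n\to x$ and $y_n\to y$ with $x_n,y_n\in V_+$, then
\[
\|x_ny_n-xy\|\leq\|x_n\|\,\|y_n-y\|+\|x_n-x\|\,\|y\|\to0
\]
because $(\|x_n\|)$ is bounded, so $x_ny_n\to xy$ and $xy\in\overline{V_+}$ in one step; your two-step ``fix one variable'' argument is equally valid, just not necessary to avoid any difficulty.
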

Together with Corollary \ref{thm:C(Omega,X+)-generating-if-X+-is-generating},
the following two elementary results will be used in Theorem \ref{thm:ordered-crossed-prod-properties}
to give sufficient conditions for the cone of a crossed product of
a pre-ordered Banach algebra dynamical system to be (topologically)
generating.
\begin{lem}
\label{lem:generating-becomes-weakly-generating-in-completions}Let
$V$ be a pre-ordered normed space. If $V_{+}$ is topologically generating
in $V$, then $V_{+}$, and hence the cone $\overline{V_{+}}$, is
topologically generating in the completion $\overline{V}$.\end{lem}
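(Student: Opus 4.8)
The plan is to exploit two elementary facts about the completion. First, the norm on $V$ is exactly the restriction of the norm on $\overline{V}$, so that a sequence lying in $V$ converges in $V$ if and only if it converges in $\overline{V}$, with the same limit. Second, $V$ is dense in $\overline{V}$ by construction. Granting these, both assertions reduce to routine manipulations of closures, the only genuine point of care being to keep track of the ambient space in which each closure is formed.

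First I would establish that $V_{+}$ is topologically generating in $\overline{V}$, that is, that $\overline{V}$ coincides with the closure of $V_{+}-V_{+}$ taken in $\overline{V}$; denote this latter closure by $K$. By hypothesis $V_{+}$ is topologically generating in $V$, so every $v\in V$ is the limit, in the norm of $V$, of some sequence drawn from $V_{+}-V_{+}$. Since this norm is the restriction of the norm of $\overline{V}$, the same sequence converges to $v$ in $\overline{V}$, whence $v\in K$. Thus $V\subseteq K$. As $K$ is closed in $\overline{V}$ and contains the dense subspace $V$, it must equal $\overline{V}$, which is the desired statement $\overline{V}=\overline{V_{+}-V_{+}}$.

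For the second assertion I would simply use the inclusion $V_{+}-V_{+}\subseteq\overline{V_{+}}-\overline{V_{+}}$, which yields $\overline{V}=K\subseteq\overline{\overline{V_{+}}-\overline{V_{+}}}\subseteq\overline{V}$, so that $\overline{V_{+}}$ is topologically generating in $\overline{V}$ as well. This is exactly the ``hence'' clause of the statement.

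I do not expect a substantial obstacle here; the argument is essentially bookkeeping. The one place where an error could creep in is conflating the closure of $V_{+}-V_{+}$ inside $V$ with its closure inside $\overline{V}$, so the crux is the observation that convergence of a sequence from $V$ is unambiguous across the two spaces, together with the density of $V$ in $\overline{V}$ which is what upgrades the generating property from $V$ to its completion.
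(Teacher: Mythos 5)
Your proposal is correct and is essentially the paper's own argument: both prove that $V_{+}-V_{+}$ is dense in $\overline{V}$ by combining its density in $V$ with the density of $V$ in $\overline{V}$. The paper merely makes this explicit via a diagonal estimate (choosing $a_{n},b_{n}\in V_{+}$ with $\|v_{n}-(a_{n}-b_{n})\|<2^{-n}$), whereas you phrase the same transitivity-of-density argument abstractly through closures.
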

\begin{proof}
Let $w\in\overline{V}$ be arbitrary, and let $(v_{n})\subseteq V$
be such that $v_{n}\to w$. For every $n\in\mathbb{N}$, let $a_{n},b_{n}\in V_{+}$
be such that $\|v_{n}-(a_{n}-b_{n})\|<2^{-n}$. Since
\[
\|w-(a_{n}-b_{n})\|\leq\|w-v_{n}\|+\|v_{n}-(a_{n}-b_{n})\|<\|w-v_{n}\|+2^{-n},
\]
$(a_{n}-b_{n})\subseteq V_{+}-V_{+}$ converges to $w$.
\end{proof}
In certain cases the conclusion of the previous lemma for $\overline{V_{+}}$
may be strengthened. 
\begin{lem}
\label{lem:generating-with-continuous-pos-part-function-becomes-generating-in-completions}Let
$V$ be a pre-ordered normed space and $(\cdot)^{+}:V\to V_{+}$ a
function such that $v\leq v^{+}$ for all $v\in V$. Then $V_{+}$
is generating in $V$, and if $(\cdot)^{+}:V\to V_{+}$ maps Cauchy
sequences to Cauchy sequences, then the cone $\overline{V_{+}}$ is
generating in the completion $\overline{V}$.\end{lem}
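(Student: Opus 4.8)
The plan is to exploit the canonical decomposition that the map $(\cdot)^{+}$ provides. For the first assertion, observe that for any $v\in V$ the hypothesis $v\leq v^{+}$ means precisely that $v^{+}-v\in V_{+}$. Since also $v^{+}\in V_{+}$ by the very definition of the map, we obtain the exact decomposition $v=v^{+}-(v^{+}-v)$ with both summands in $V_{+}$. Hence $V=V_{+}-V_{+}$, so $V_{+}$ is generating.

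For the second assertion, I would approximate an arbitrary $w\in\overline{V}$ by a sequence $(v_{n})\subseteq V$ with $v_{n}\to w$, and split each term using the decomposition above as $v_{n}=v_{n}^{+}-(v_{n}^{+}-v_{n})$. The sequence $(v_{n})$ is Cauchy, being convergent, so by the hypothesis that $(\cdot)^{+}$ maps Cauchy sequences to Cauchy sequences, $(v_{n}^{+})$ is Cauchy in $V$ and therefore converges to some $a\in\overline{V}$; since each $v_{n}^{+}$ lies in $V_{+}$, the limit satisfies $a\in\overline{V_{+}}$. Consequently $(v_{n}^{+}-v_{n})$ converges to $a-w$, and as each $v_{n}^{+}-v_{n}$ lies in $V_{+}$, the limit $a-w$ lies in $\overline{V_{+}}$ as well. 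Setting $b:=a-w$ then yields $w=a-b$ with $a,b\in\overline{V_{+}}$, whence $\overline{V}=\overline{V_{+}}-\overline{V_{+}}$, as required.

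The argument is essentially routine; the only point requiring genuine care is the passage to the completion, where the Cauchy-to-Cauchy hypothesis is exactly what guarantees that the positive parts $v_{n}^{+}$ actually \emph{converge} (rather than merely remain bounded or Cauchy in the weaker sense needed for topological generation), so that their limit is an honest element of $\overline{V_{+}}$. This is precisely what strengthens the present conclusion beyond that of Lemma \ref{lem:generating-becomes-weakly-generating-in-completions}: there the approximating positive elements are chosen by successive estimates and need not converge, yielding only that $\overline{V_{+}}$ is topologically generating, whereas here the explicit positive-part function forces genuine convergence and hence the stronger conclusion that $\overline{V_{+}}$ is generating.
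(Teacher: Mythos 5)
Your proof is correct and follows essentially the same route as the paper's: decompose $v=v^{+}-(v^{+}-v)$ for the first assertion, and for the second use the Cauchy-to-Cauchy hypothesis to get convergence of $(v_{n}^{+})$ to a limit in the closed cone $\overline{V_{+}}$, then write $w=a-(a-w)$ exactly as the paper writes $w=w'-(w'-w)$. Your closing remark contrasting this with the merely topological generation of Lemma \ref{lem:generating-becomes-weakly-generating-in-completions} is also an accurate reading of why the extra hypothesis is needed.
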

\begin{proof}
It is obvious that the fact that $V_{+}$ is generating in $V$ is
equivalent with the existence of a function $(\cdot)^{+}:V\to V_{+}$
such that $v\leq v^{+}$ for all $v\in V$.

Assuming that $(\cdot)^{+}:V\to V_{+}$ maps Cauchy sequences to Cauchy
sequences, let $w\in\overline{V}$ be arbitrary and let $(v_{n})\subseteq V$
be such that $v_{n}\to w$. The sequence $(v_{n})\subseteq V$ is
Cauchy, hence, by hypothesis, so is $(v_{n}^{+})\subseteq V_{+}\subseteq\overline{V_{+}}$.
Since $\overline{V_{+}}$ is closed in $\overline{V}$, $(v_{n}^{+})$
converges to some $w'\in\overline{V_{+}}$. Since $v_{n}^{+}-v_{n}\in V_{+}\subseteq\overline{V_{+}}$,
we have $w'-w=\lim_{n\to\infty}(v_{n}^{+}-v_{n})\in\overline{V_{+}}$.
Writing $w=w'-(w'-w)$ yields the result.\end{proof}
\begin{rem}
If $V$ is a normed vector lattice, then the map $v\mapsto v\vee0$
is uniformly continuous and hence maps Cauchy sequences to Cauchy
sequences \cite[Proposition II.5.2]{Schaefer}. Hence $\overline{V_{+}}$
is generating in the completion $\overline{V}$. Since, in this case,
$\overline{V}$ is actually a Banach lattice \cite[Corollary 2, p.\,84]{Schaefer},
this is not unexpected.
\end{rem}
The following refinement of And\^o's Theorem \cite[Lemma 1]{Ando}
is a special case of \cite[Theorem 4.1]{RightInverses}, of which
the essence is that the decomposition of elements as the difference
of positive elements can be chosen in a bounded, continuous and positively
homogeneous manner. Its proof proceeds through applications of a generalization
of the usual Open Mapping Theorem \cite[Theorem 3.2]{RightInverses}
and the Michael Selection Theorem \cite[Theorem 17.66]{AliprantisBorder}.
It will be applied in Theorem \ref{thm:ordered-crossed-prod-properties}
to prove that the cones of certain crossed products associated with
pre-ordered Banach algebras are topologically generating.
\begin{thm}
\label{thm:upper-bound-function}Let $X$ be a pre-ordered Banach
space with closed generating cone. Then there exist a constant $\alpha>0$
and continuous positively homogeneous maps $(\cdot)^{\pm}:X\to X_{+}$
such that $x=x^{+}-x^{-}$ and $\|x^{+}\|+\|x^{-}\|\leq\alpha\|x\|$
for all $x\in X$.
\end{thm}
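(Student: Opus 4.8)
The plan is to obtain both maps $(\cdot)^{\pm}$ as a single continuous, positively homogeneous selection of a suitable set-valued map. I would proceed in two stages: first secure the \emph{quantitative} decomposition (the content of And\^o's Theorem) via an open mapping argument for cones, and then upgrade the bare existence of decompositions to a continuous, homogeneous \emph{choice} of them by a selection theorem.

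For the first stage, consider the closed cone $X_{+}\times X_{+}$ in the Banach space $X\times X$ and the map $T:X_{+}\times X_{+}\to X$ defined by $T(a,b)=a-b$. This $T$ is additive, positively homogeneous, continuous, and surjective, the surjectivity being exactly the hypothesis that $X_{+}$ is generating. The generalization of the Open Mapping Theorem valid for such maps on cones \cite[Theorem 3.2]{RightInverses} then yields a constant $\beta>0$ such that every $x\in X$ has a preimage $(a,b)\in X_{+}\times X_{+}$ with $a-b=x$ and $\|a\|+\|b\|\leq\beta\|x\|$.

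For the second stage, fix $\beta'>\beta$ and, on the unit sphere $S_{X}=\{x\in X:\|x\|=1\}$, define the set-valued map
\[
\Psi(x):=\{(a,b)\in X_{+}\times X_{+}:a-b=x,\ \|a\|+\|b\|\leq\beta'\}.
\]
Each $\Psi(x)$ is nonempty by the first stage, and is closed and convex as an intersection of the closed cone, the affine set $\{a-b=x\}$, and a closed norm constraint; the domain $S_{X}$ is metric, hence paracompact. To invoke Michael's Selection Theorem \cite[Theorem 17.66]{AliprantisBorder} I must verify that $\Psi$ is lower semicontinuous, and this is where the slack $\beta'>\beta$ is used: given $(a_{0},b_{0})\in\Psi(x_{0})$ and $x\in S_{X}$ near $x_{0}$, I would apply the bound from the first stage to write $x-x_{0}=a'-b'$ with $\|a'\|+\|b'\|\leq\beta\|x-x_{0}\|$, so that $(a_{0}+a',b_{0}+b')\in\Psi(x)$ is close to $(a_{0},b_{0})$ once $x$ is close enough to $x_{0}$. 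Michael's theorem then supplies a continuous selection $\sigma:S_{X}\to X_{+}\times X_{+}$ of $\Psi$.

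Finally I would extend by positive homogeneity, setting $(x^{+},x^{-}):=\|x\|\,\sigma(x/\|x\|)$ for $x\neq0$ and $(0^{+},0^{-}):=(0,0)$. The relation $x=x^{+}-x^{-}$, the estimate $\|x^{+}\|+\|x^{-}\|\leq\beta'\|x\|$ (so that $\alpha:=\beta'$ works), and positive homogeneity are then immediate; continuity on $X\setminus\{0\}$ is clear, and continuity at the origin follows from the norm estimate. I expect the main obstacle to be the rigorous verification of lower semicontinuity of $\Psi$, since this is precisely the step where the quantitative open mapping estimate must be fed back in to perturb a decomposition of $x_{0}$ into one of a nearby $x$ without violating the norm constraint -- the choice $\beta'>\beta$ being exactly what keeps the values nonempty and the perturbation admissible under small movements of the argument.
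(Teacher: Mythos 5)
Your strategy is exactly the one the paper has in mind: the paper gives no in-text proof of this theorem, but presents it as a special case of \cite[Theorem 4.1]{RightInverses} and records that the proof there proceeds through a generalized Open Mapping Theorem for cones \cite[Theorem 3.2]{RightInverses} followed by the Michael Selection Theorem \cite[Theorem 17.66]{AliprantisBorder} --- precisely your two stages, in the same order, with the same map $T(a,b)=a-b$ on $X_{+}\times X_{+}$, the same set-valued map on the unit sphere, and the same positively homogeneous extension.

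There is, however, a genuine flaw in your verification of lower semicontinuity, at exactly the point you flag as the main obstacle. From $(a_{0},b_{0})\in\Psi(x_{0})$ you only know $\|a_{0}\|+\|b_{0}\|\leq\beta'$; writing $x-x_{0}=a'-b'$ with $\|a'\|+\|b'\|\leq\beta\|x-x_{0}\|$ then gives only $\|a_{0}+a'\|+\|b_{0}+b'\|\leq\beta'+\beta\|x-x_{0}\|$, which can exceed $\beta'$, so $(a_{0}+a',b_{0}+b')$ need not lie in $\Psi(x)$ no matter how close $x$ is to $x_{0}$: the slack $\beta'>\beta$ does not by itself protect decompositions sitting on the boundary of the norm constraint. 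The repair is a convexity argument that lets you assume strict slack before perturbing. By your first stage there is $(a_{1},b_{1})\in\Psi(x_{0})$ with $\|a_{1}\|+\|b_{1}\|\leq\beta<\beta'$; since $\Psi(x_{0})$ is convex, the points $(1-t)(a_{0},b_{0})+t(a_{1},b_{1})$ lie in $\Psi(x_{0})$, have norm sum at most $(1-t)\beta'+t\beta<\beta'$, and converge to $(a_{0},b_{0})$ as $t\to0^{+}$. Hence any open set $V$ meeting $\Psi(x_{0})$ in fact meets it in a point whose norm sum is at most $\beta'-\delta$ for some $\delta>0$, and for that point your perturbation argument becomes valid once $\beta\|x-x_{0}\|<\delta$, after shrinking the neighbourhood of $x_{0}$ further so that the perturbed pair also stays inside $V$. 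With this insertion the lower semicontinuity holds, and the remainder of your argument --- nonempty closed convex values, paracompactness of $S_{X}$, Michael selection, homogeneous extension, and continuity at the origin from the norm estimate --- is correct as written.
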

{}Simply through composition with the functions $(\cdot)^{\pm}:X\to X_{+}$,
cones of continuous $X_{+}$-valued functions are then immediately
seen to be generating in spaces of continuous $X$-valued functions.
For example:
\begin{cor}
\label{thm:C(Omega,X+)-generating-if-X+-is-generating}Let $\Omega$
be a locally compact Hausdorff space and $X$ be a pre-ordered Banach
space with closed generating cone. Then the cone $C_{c}(\Omega,X_{+})$
is generating in $C_{c}(\Omega,X)$. In fact, there exists a constant
$\alpha>0$ with the property that, for every $f\in C_{c}(G,X)$,
there exist $f^{\pm}\in C_{c}(G,X)$ such that $\|f^{+}(\omega)\|+\|f^{-}(\omega)\|\leq\alpha\|f(\omega)\|$
for all $\omega\in\Omega$. In particular, $\|f^{\pm}\|_{\infty}\leq\alpha\|f\|_{\infty}$
and $\textup{supp}(f^{\pm})\subseteq\textup{supp}(f).$\end{cor}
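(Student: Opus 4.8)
The plan is to reduce Corollary~\ref{thm:C(Omega,X+)-generating-if-X+-is-generating} to Theorem~\ref{thm:upper-bound-function} by applying the pointwise decomposition maps $(\cdot)^{\pm}:X\to X_+$ function-by-function. Given $f\in C_c(\Omega,X)$, I would define $f^{\pm}:=(\cdot)^{\pm}\circ f$, that is, $f^{\pm}(\omega):=(f(\omega))^{\pm}$ for each $\omega\in\Omega$. The desired algebraic identity $f=f^+-f^-$ is then immediate from $x=x^+-x^-$ applied pointwise, and the pointwise norm estimate $\|f^+(\omega)\|+\|f^-(\omega)\|\leq\alpha\|f(\omega)\|$ follows directly from the corresponding estimate in Theorem~\ref{thm:upper-bound-function} with the \emph{same} constant $\alpha$. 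The supremum-norm bound $\|f^{\pm}\|_\infty\leq\alpha\|f\|_\infty$ is then an immediate consequence of taking suprema over $\omega\in\Omega$.

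The two points requiring genuine verification are that $f^{\pm}$ lands in $C_c(\Omega,X)$, namely that it is both continuous and compactly supported. Continuity of $f^{\pm}$ is where the hypotheses of Theorem~\ref{thm:upper-bound-function} pay off: since $(\cdot)^{\pm}:X\to X_+$ is continuous and $f:\Omega\to X$ is continuous, the composition $f^{\pm}=(\cdot)^{\pm}\circ f$ is continuous as a map $\Omega\to X_+\hookrightarrow X$. For the support condition I would use positive homogeneity together with the identity $x^+-x^-=x$: at any point $\omega$ where $f(\omega)=0$ the norm estimate forces $\|f^+(\omega)\|+\|f^-(\omega)\|\leq\alpha\cdot 0=0$, so $f^{\pm}(\omega)=0$. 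Hence $\{\omega:f^{\pm}(\omega)\neq 0\}\subseteq\{\omega:f(\omega)\neq 0\}$, and taking closures gives $\operatorname{supp}(f^{\pm})\subseteq\operatorname{supp}(f)$, which is compact since $f\in C_c(\Omega,X)$. Thus $f^{\pm}\in C_c(\Omega,X)$, and since $f^{\pm}(\omega)\in X_+$ for all $\omega$, in fact $f^{\pm}\in C_c(\Omega,X_+)$.

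I do not anticipate a serious obstacle here; the corollary is essentially a packaging of Theorem~\ref{thm:upper-bound-function} and the only care needed is confirming that composition preserves membership in $C_c(\Omega,X)$. The mildest subtlety is the support argument, where one must invoke the norm estimate (rather than just the decomposition identity) to conclude that $f^{\pm}$ vanishes wherever $f$ does; this is precisely the feature that distinguishes Theorem~\ref{thm:upper-bound-function} from a bare application of And\^o's Theorem, where the decomposition need not be controlled pointwise. With that observed, the statement $C_c(\Omega,X_+)-C_c(\Omega,X_+)=C_c(\Omega,X)$ follows, establishing that $C_c(\Omega,X_+)$ is generating in $C_c(\Omega,X)$.
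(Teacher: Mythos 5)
Your proposal is correct and follows essentially the same route as the paper, which obtains the corollary ``simply through composition with the functions $(\cdot)^{\pm}:X\to X_{+}$'' from Theorem \ref{thm:upper-bound-function}, i.e.\ precisely your pointwise definition $f^{\pm}:=(\cdot)^{\pm}\circ f$. The continuity and support verifications you spell out (in particular using the norm estimate to see that $f^{\pm}$ vanishes wherever $f$ does) are exactly the details the paper leaves implicit.
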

\begin{rem}
\label{rem:wickstead}The earliest results of this type known to the
authors are \cite[Theorem 2.3]{AsimowAtkinson} and \cite[Theorem 4.4]{Wickstead}.
Both results proceed through an application of Lazar's affine selection
theorem to show that canonical cones of certain spaces of continuous
affine functions are generating. The result \cite[Theorem 2.3]{AsimowAtkinson}
shows, with $K$ a Choquet simplex and $X$ a pre-ordered Banach space
with a closed cone, that the space $A(K,X)$ of continuous affine
functions from $K$ to $X$ has $A(K,X_{+})$ as a generating cone.
By taking $K$ to be the regular Borel probability measures on a compact
Hausdorff space $\Omega$, this result includes the case that $C(\Omega,X_{+})$
is generating in $C(\Omega,X)$, which is part of the statement of
Corollary \ref{thm:C(Omega,X+)-generating-if-X+-is-generating}. 
\end{rem}
We will now define normality and conormality properties for a pre-ordered
Banach space $X$ with a closed cone, and subsequently show in Theorem
\ref{thm-yamamuro-results} how these properties imply normality properties
of the pre-ordered normed space $B(X,Y)$. In Theorem \ref{thm:ordered-crossed-prod-properties}
this will be used to conclude (conditional) normality properties of
a pre-ordered crossed product.
\begin{defn}
Let $X$ be a pre-ordered Banach space with closed cone and $\alpha>0$.
We define the following \textbf{\emph{normality properties}}\emph{:}
\begin{enumerate}
\item We will say that $X$ is \emph{$\alpha$-normal} if, for any $x,y\in X$,
$0\leq x\leq y$ implies $\|x\|\leq\alpha\|y\|$. 
\item We will that $X$ is \emph{$\alpha$-absolutely normal} if, for any
$x,y\in X$, $\pm x\leq y$ implies $\|x\|\leq\alpha\|y\|$. 
\end{enumerate}
We define the following \textbf{\emph{conormality properties}}\emph{:}
\begin{enumerate}
\item We will say that $X$ is \emph{approximately $\alpha$-absolutely
conormal} if, for any $x\in X$ and $\varepsilon>0$, there exists
some $a\in X_{+}$ such that $\pm x\leq a$ and $\|a\|<\alpha\|x\|+\varepsilon$. 
\item We will say that $X$ is \emph{approximately $\alpha$-sum-conormal}
if, for any $x\in X$ and $\varepsilon>0$, there exist some $a,b\in X_{+}$
such that $x=a-b$ and $\|a\|+\|b\|<\alpha\|x\|+\varepsilon$. 
\end{enumerate}
\end{defn}
\begin{rem}
Normality (terminology due to Krein \cite{KreinNormality}) and (approximate)
conormality (terminology due to Walsh \cite{Walsh}) are dual properties
for pre-ordered Banach spaces with closed cones. Roughly speaking,
a pre-ordered Banach space with a closed cone has some normality property
precisely if its dual has a corresponding conormality property, and
vice versa. The most complete reference for such duality relationships
seems to be \cite{BattyRobinson}. 

For a pre-ordered Banach space $X$ with a closed cone, elementary
arguments will show that $\alpha$-absolutely normality of $X$ implies
that $X$ is $\alpha$-normal, which in turn implies that $X_{+}$
is a proper cone. Also, approximate $\alpha$-sum-conormality of $X$
implies that $X$ is approximately $\alpha$-absolute conormal, which
in turn implies that $X_{+}$ is generating in $X$. An application
of And\^o's Theorem \cite[Lemma 1]{Ando} shows conversely that,
if $X_{+}$ is generating in $X$, then there exists some $\beta>0$
such that, for every $x\in X$, there exists $a,b\in X_{+}$ such
that $x=a-b$ and $\max\{\|a\|,\|b\|\}\leq\beta\|x\|$ (another form
of conormality, clearly implying approximate $2\beta$-sum-conormality).
We note that Banach lattices are always $1$-absolutely normal and
approximately $1$-absolutely conormal.
\end{rem}
The following results relate normality properties of spaces of operators
to the normality and conormality properties of the underlying spaces.
Part (2) is due to Wickstead \cite[Theorem 3.1]{Wickstead}. Part
(3) is a slight refinement of a result due to Yamamuro \cite[Theorem 1.3]{Yamamuro},
where it is proven for the case $X=Y$ and $\alpha=\beta=1$. No reference
for part (4) is known to the authors. We include proofs for convenience
of the reader.
\begin{thm}
\label{thm-yamamuro-results}Let $X$ and $Y$ be pre-ordered Banach
spaces with closed cones and $\alpha,\beta>0$. 
\begin{enumerate}
\item If $X_{+}$ is generating and $Y_{+}$ is a proper cone, then $B(X,Y)_{+}$
is a proper cone.
\item If $X_{+}$ is generating and $Y$ is $\alpha$-normal, then there
exists some $\gamma>0$ for which $B(X,Y)$ is $\gamma$-normal.
\item If $X$ is approximately $\alpha$-absolutely conormal and $Y$ is
$\beta$-absolutely normal, then $B(X,Y)$ is $\alpha\beta$-absolutely
normal.
\item If $X$ is approximately $\alpha$-sum-conormal and $Y$ is $\beta$-normal,
then $B(X,Y)$ is $\alpha\beta$-normal.
\end{enumerate}
\end{thm}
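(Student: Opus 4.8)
The plan is to handle the four parts separately. Part (1) is a direct algebraic argument, while parts (2), (3) and (4) share a common skeleton: given an operator $S$ squeezed appropriately between $0$, $\pm T$, I test it against an arbitrary $x\in X$ with $\|x\|\le1$, decompose or dominate $x$ by positive elements using the assumed conormality or generation of $X_+$, apply the assumed normality of $Y$ to the resulting positive elements, and finally optimize and take a supremum over $x$. For part (1), suppose $T\in B(X,Y)_+\cap(-B(X,Y)_+)$. Then $TX_+\subseteq Y_+\cap(-Y_+)=\{0\}$ because $Y_+$ is proper, so $T$ vanishes on $X_+$; since $X_+$ is generating, $X=X_+-X_+$, and hence $T=0$.

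For part (2), I would use the consequence of And\^o's Theorem recorded in the remark preceding the statement: since $X_+$ is generating, there is a $\beta>0$ such that every $x\in X$ admits a decomposition $x=a-b$ with $a,b\in X_+$ and $\max\{\|a\|,\|b\|\}\le\beta\|x\|$. Given $0\le S\le T$ and $\|x\|\le1$, I write $x=a-b$ in this way; then $0\le Sa\le Ta$ and $0\le Sb\le Tb$, so $\alpha$-normality of $Y$ yields $\|Sa\|\le\alpha\|Ta\|\le\alpha\beta\|T\|$ and similarly for $b$, whence $\|Sx\|\le2\alpha\beta\|T\|$ and $\gamma=2\alpha\beta$ suffices. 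Part (4) runs along identical lines but more sharply: given $0\le S\le T$, $\|x\|\le1$ and $\varepsilon>0$, approximate $\alpha$-sum-conormality of $X$ supplies $a,b\in X_+$ with $x=a-b$ and $\|a\|+\|b\|<\alpha+\varepsilon$, and then $\|Sx\|\le\|Sa\|+\|Sb\|\le\beta\|T\|(\|a\|+\|b\|)<\beta\|T\|(\alpha+\varepsilon)$; letting $\varepsilon\downarrow0$ and taking the supremum over $x$ gives $\|S\|\le\alpha\beta\|T\|$.

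The step I expect to be the crux is part (3), where $S$ need not be decomposed but rather dominated. Given $\pm S\le T$ --- equivalently $T\pm S\in B(X,Y)_+$, which also forces $T\in B(X,Y)_+$ by convexity of the cone, since $T=\frac12((T+S)+(T-S))$ --- and given $\|x\|\le1$ and $\varepsilon>0$, approximate $\alpha$-absolute conormality of $X$ provides $a\in X_+$ with $\pm x\le a$ and $\|a\|<\alpha+\varepsilon$. The key observation I aim to establish is that $\pm Sx\le Ta$. This should follow from the identity $x=\frac12((a+x)-(a-x))$ together with $a\pm x\in X_+$: applying $\pm S\le T$ to the positive elements $a+x$ and $a-x$ and combining the appropriate sign choices gives $Sx\le\frac12(T(a+x)+T(a-x))=Ta$, and symmetrically $-Sx\le Ta$. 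Since $a\in X_+$ and $T\in B(X,Y)_+$, we have $Ta\in Y_+$, so $\beta$-absolute normality of $Y$ applies to $\pm Sx\le Ta$ and yields $\|Sx\|\le\beta\|Ta\|\le\beta\|T\|\|a\|<\beta\|T\|(\alpha+\varepsilon)$; letting $\varepsilon\downarrow0$ and taking the supremum over $x$ completes the estimate $\|S\|\le\alpha\beta\|T\|$. The only subtlety I anticipate is verifying that $T$ itself is positive, which is needed both for $Ta\in Y_+$ and to make sense of the domination, and this is exactly the convexity remark noted above.
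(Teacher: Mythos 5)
Your proof is correct and follows essentially the same route as the paper's: the identical algebraic argument for (1), the And\^o-type decomposition with $\max\{\|a\|,\|b\|\}\leq\beta\|x\|$ for (2), the same $(a\pm x)/2$ splitting trick for (3), and the sum-conormal decomposition for (4). The only difference is your extra verification in (3) that the dominating operator $T$ is positive, which is harmless but unnecessary: $\beta$-absolute normality of $Y$ places no positivity requirement on the dominating element, and in any case $Ta\geq0$ follows automatically from $\pm Sx\leq Ta$.
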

\begin{proof}
We prove (1). Let $T\in B(X,Y)_{+}\cap(-B(X,Y)_{+})$. If $x\in X_{+}$,
then $Tx\geq0$ and $(-T)x\geq0$. Hence $Tx=0$, since $Y_{+}$ is
proper. Since $X_{+}$ is generating, we have $T=0$ as required.

We prove (2). By And\^o's Theorem \cite[Lemma 1]{Ando}, the fact
that $X_{+}$ is generating in $X$ implies that there exists some
$\beta>0$ such that, for every $x\in X$, there exist $a,b\in X_{+}$
such that $x=a-b$ and $\max\{\|a\|,\|b\|\}\leq\beta\|x\|$. Let $T,S\in B(X,Y)$
be such that $0\leq T\leq S$. Then, for any $x\in X$, let $a,b\in X_{+}$
be such that $x=a-b$ and $\max\{\|a\|,\|b\|\}\leq\beta\|x\|$, so
that $0\leq Ta\leq Sa$ and $0\leq Tb\leq Sb$. By $\alpha$-normality
of $Y$, 
\[
\|Tx\|\leq\|Ta\|+\|Tb\|\leq\alpha(\|Sa\|+\|Sb\|)\leq\alpha\|S\|(\|a\|+\|b\|)\leq2\alpha\beta\|S\|\|x\|,
\]
hence $\|T\|\leq2\alpha\beta\|S\|$.

We prove (3). Let $T,S\in B(X,Y)$ satisfy $\pm T\leq S$. Let $x\in X$
be arbitrary. Then, for every $\varepsilon>0$, there exists some
$a\in X_{+}$ satisfying $\pm x\leq a$ and $\|a\|<\alpha\|x\|+\varepsilon$.
Then 
\[
Tx=T\left(\frac{a+x}{2}\right)-T\left(\frac{a-x}{2}\right),
\]
and hence 
\[
\pm Tx=\pm T\left(\frac{a+x}{2}\right)\mp T\left(\frac{a-x}{2}\right).
\]
Since $(a+x)/2\geq0$, $(a-x)/2\geq0$ and $\pm T\leq S$, we find
\[
\pm Tx\leq S\left(\frac{a+x}{2}\right)+S\left(\frac{a-x}{2}\right)=Sa.
\]
Now, because $Y$ is $\beta$-absolutely normal, we obtain 
\[
\|Tx\|\leq\beta\|Sa\|\leq\beta\|S\|\|a\|\leq\alpha\beta\|S\|\|x\|+\varepsilon\beta\|S\|.
\]
Since $\varepsilon>0$ was chosen arbitrarily, we conclude that $B(X,Y)$
is $\alpha\beta$-absolutely normal.

We prove (4). Let $T,S\in B(X,Y)$ satisfy $0\leq T\leq S$. Let $x\in X$
be arbitrary. Then, for every $\varepsilon>0$, there exist $a,b\in X_{+}$
such that $x=a-b$ and $\|a\|+\|b\|\leq\alpha\|x\|+\varepsilon$.
Because $0\leq T\leq S$, we have $0\leq Ta\leq Sa$ and $0\leq Tb\leq Sb$.
Since $Y$ is $\beta$-normal, we obtain
\[
\|Tx\|\leq\|Ta\|+\|Tb\|\leq\beta\|Sa\|+\beta\|Sb\|\leq\beta\|S\|\left(\|a\|+\|b\|\right)\leq\alpha\beta\|S\|\|x\|+\varepsilon\beta\|S\|.
\]
 Since $\varepsilon>0$ was chosen arbitrarily, we conclude that $B(X,Y)$
is $\alpha\beta$-normal.\end{proof}
\begin{rem}
\label{remark:onyamamuroresults}We note some specific cases of the
above theorem. Any Banach lattice $X$ is both approximately $1$-absolutely
conormal and $1$-absolutely normal, therefore (3) in the previous
result implies that $B(X)$ is always $1$-absolutely normal in this
case. Also, if $X$ is a Banach lattice and $Y$ and $Z$ are pre-ordered
Banach spaces with closed cones that are respectively $\alpha$-absolutely
normal and approximately $\alpha$-absolutely conormal for some $\alpha>0$,
then $B(X,Y)$ and $B(Z,X)$ are $\alpha$-absolutely normal, again
by (3) above. If a Banach lattice $X$ is an AL-space (i.e., $\|x+y\|=\|x\|+\|y\|$
for all $x,y\in X_{+}$), then it is approximately $1$-sum-conormal,
and hence, for any $\alpha$-normal pre-ordered Banach space $Y$
with a closed cone, $B(X,Y)$ is $\alpha$-normal by (4) in the previous
result. 

Let $\mathcal{H}$ be real Hilbert space endowed with a Lorentz cone
$\mathcal{L}_{v}:=\{x\in\mathcal{H}:\langle v|x\rangle\geq\|Px\|\}$,
where $v\in\mathcal{H}$ is any norm 1 element, and $P$ the projection
onto $\{v\}^{\bot}$. Although not a Banach lattice if $\dim\mathcal{H}\geq3$,
the ordered Banach space $(\mathcal{H},\mathcal{L}_{v})$ is $1$-absolutely
normal and approximately $1$-absolutely conormal \cite{PosAttainedOperatorNorms}.
Hence, again by (3) above, $B(\mathcal{H})$ is $1$-absolutely normal,
and if $Y$ and $Z$ are pre-ordered Banach spaces with closed cones
that are respectively $\alpha$-absolutely normal and approximately
$\alpha$-absolutely conormal for some $\alpha>0$, then $B(\mathcal{H},Y)$
and $B(Z,\mathcal{H})$ are $\alpha$-absolutely normal.
\end{rem}

\subsection{Representations on pre-ordered normed spaces\label{sub:positive-Representations-of-groups-and-algebras}}

We will now introduce positive representations of groups and pre-ordered
normed algebras on pre-ordered normed spaces. In Section \ref{sec:Pre-ordered-Banach-algebra-dynamical-systems-and-crossed-prods}
we will use the notions in this section to describe a bijection between
the positive non-degenerate bounded representations of a crossed product
associated with a pre-ordered Banach algebra dynamical system on the
one hand, and positive non-degenerate covariant representations of
the certain dynamical system on the other hand (cf.\,Theorem \ref{thm:Ordered-general-correspondence}).
\begin{defn}
Let $A$ be a normed algebra and $X$ a normed space. An algebra homomorphism
$\pi:A\to B(X)$ will be called a \emph{representation of $A$ on}
$X$. We will write $X_{\pi}$ for $X$, if the connection between
$X_{\pi}$ and $\pi$ requires emphasis. We will say that $\pi$ is
\emph{non-degenerate} if $\mbox{span}\{\pi(a)x:a\in A,x\in X\}$ is
dense in $X$. 

If $A$ is a pre-ordered normed algebra and $X$ is a pre-ordered
normed space, we will say that a representation $\pi$ of $A$ on
$X$ is \emph{positive} if $\pi(A_{+})\subseteq B(X)_{+}$.
\end{defn}
{}
\begin{defn}
Let $G$ be a locally compact group and $X$ a normed space. A group
homomorphism $U:G\to\mbox{Inv}(X)$ will be called a \emph{representation}
(\emph{of $G$ on $X$}). 

If $X$ is a pre-ordered normed space, a group homomorphism $U:G\to\mbox{Inv}_{+}(X)\subseteq B(X)$
(cf.\,Section \ref{sub:Pre-ordered-normed-spaces-and-algebras})
will be called a \emph{positive representation} of $G$ on $X$. 
\end{defn}
For typographical reasons we will write $U_{s}$ instead of $U(s)$
for $s\in G$. 

Note that continuity is not included in the definition of representations
of normed algebras and locally compact groups, and that representations
of a unital algebra are not required to be unital.

The left centralizer algebra of a normed algebra, to be introduced
next, plays a crucial role in the construction of the bijection mentioned
in the first paragraph of this section. 
\begin{defn}
Let $A$ be a normed algebra. A bounded linear operator $L:A\to A$
will be called a \emph{left centralizer of }$A$ if $L(ab)=(La)b$
for all $a,b\in A$. The unital normed algebra of all left centralizers,
with the operator norm inherited from $B(A)$, will be denoted $\leftcent(A)$
and called the \emph{left centralizer algebra of $A$}. The \emph{left
regular representation of $A$,} $\lambda:A\to\leftcent(A)$, is defined
by $\lambda(a)b:=ab$ for $a,b\in A$. 

If $A$ is a pre-ordered normed algebra, we will always assume that
$\leftcent(A)$ is endowed with the cone $\leftcent(A)\cap B(A)_{+}$.
Then $\lambda:A\to B(A)$ is a positive contractive representation
of $A$ on itself.
\end{defn}
{}
\begin{defn}
If $A$ is a pre-ordered normed algebra, we will say an approximate
left (right) identity $(u_{i})$ of $A$ is \emph{positive }if $(u_{i})\subseteq A_{+}$.
\end{defn}
The result \cite[Theorem 4.1]{2009arXiv0904.3268D} plays a key role
in the proof of the General Correspondence Theorem (Theorem \ref{thm:General-Correspondence-Theorem}).
We collect the relevant parts in Theorem \ref{thm:extension-theorem},
including how it can be applied to representations of pre-ordered
normed algebras on pre-ordered Banach spaces with closed cones. This
will be used to adapt the General Correspondence Theorem to the ordered
context (cf.\,Theorem \ref{thm:Ordered-general-correspondence}).
\begin{thm}
\label{thm:extension-theorem} Let $B$ be a normed algebra with an
$M$-bounded approximate left identity $(u_{i})$ and $X$ a Banach
space. If $T:B\to B(X)$ is a non-degenerate bounded representation,
then the map $\overline{T}:\leftcent(B)\to B(X)$ defined by $\overline{T}(L):=\textup{SOT-lim}_{i}T(Lu_{i})$
is the unique representation of $\leftcent(B)$ on $X$ such that
the diagram 
\[
\xymatrix{B\ar[r]^{T}\ar[dr]^{\lambda} & B(X)\\
 & \leftcent(B)\ar[u]_{\overline{T}}
}
\]
commutes. Moreover, $\overline{T}$ is non-degenerate and bounded,
with $\|\overline{T}\|\leq M\|T\|$, and satisfies $\overline{T}(L)T(b)=T(Lb)$
for all $b\in B$ and $L\in\leftcent(B)$. 

If, in addition, $B$ is a pre-ordered normed algebra, $(u_{i})$
is positive, and $X$ is a pre-ordered Banach space with a closed
cone, then $\overline{T}:\leftcent(B)\to B(X)$ is a positive non-degenerate
bounded representation of $\leftcent(B)$ on $X$.
\end{thm}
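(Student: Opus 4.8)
The plan is to lean entirely on the first paragraph of the theorem, which (being quoted from \cite[Theorem 4.1]{2009arXiv0904.3268D}) already supplies $\overline{T}$ as a non-degenerate bounded representation of $\leftcent(B)$ on $X$, together with the explicit formula $\overline{T}(L)=\textup{SOT-lim}_{i}T(Lu_{i})$. Under the additional ordered hypotheses, non-degeneracy and boundedness are therefore inherited for free, and the only assertion left to prove is positivity, i.e.\ that $\overline{T}(L)\in B(X)_{+}$ whenever $L\in\leftcent(B)_{+}$. I would first unwind what this means with the conventions of Section \ref{sub:positive-Representations-of-groups-and-algebras}: the cone on $\leftcent(B)$ is $\leftcent(B)\cap B(B)_{+}$, so $L\in\leftcent(B)_{+}$ says exactly that $L$ is a left centralizer with $L(B_{+})\subseteq B_{+}$, while $\overline{T}(L)\in B(X)_{+}$ means $\overline{T}(L)X_{+}\subseteq X_{+}$.

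The argument itself is then a single limiting step. I would fix $L\in\leftcent(B)_{+}$ and $x\in X_{+}$ and read off from the defining formula that $\overline{T}(L)x=\lim_{i}T(Lu_{i})x$, the limit being in the norm of $X$ along the net indexing the approximate identity (this is precisely SOT-convergence evaluated at $x$). Now I would place every term of this net in $X_{+}$: positivity of the approximate left identity gives $u_{i}\in B_{+}$, positivity of $L$ then gives $Lu_{i}\in B_{+}$, and positivity of $T$ finally gives $T(Lu_{i})\in B(X)_{+}$, so that $T(Lu_{i})x\in X_{+}$ for every $i$. Since $X$ has a closed cone, $X_{+}$ contains the limits of its convergent nets, and hence $\overline{T}(L)x=\lim_{i}T(Lu_{i})x\in X_{+}$. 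As $x\in X_{+}$ and $L\in\leftcent(B)_{+}$ were arbitrary, this yields $\overline{T}(\leftcent(B)_{+})\subseteq B(X)_{+}$, which is the positivity of $\overline{T}$.

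There is no real obstacle beyond assembling the right ingredients, so the emphasis should be on identifying them correctly. The decisive input is the positivity of $T$: it is what converts the positive elements $Lu_{i}$ into positive operators $T(Lu_{i})$, and it is genuinely indispensable --- already for one-dimensional $B$, taking $T(a)=aP$ for a non-positive idempotent $P$ on $X$ produces a non-degenerate bounded representation with $\overline{T}$ non-positive. (Accordingly I read the positivity of $T$ as standing among the ordered hypotheses of this clause.) The second, complementary ingredient is the positivity of $(u_{i})$, without which $Lu_{i}$ need not be positive; and the closedness of $X_{+}$ is exactly what lets positivity survive the passage to the SOT-limit. The only point requiring a moment's care is that this limit is taken along a net rather than a sequence, but since the limit of a net contained in a closed subset of a topological space lies in that subset, closedness of $X_{+}$ settles the matter.
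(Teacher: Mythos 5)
Your proof is correct and follows what is essentially the paper's own route: the paper states this theorem without proof, importing the unordered first paragraph from \cite[Theorem 4.1]{2009arXiv0904.3268D} and leaving the ordered addendum as an implicit easy consequence, which is precisely the SOT-limit argument you give (positivity of $u_{i}$, of $L$, and of $T$ place every $T(Lu_{i})x$ in $X_{+}$, and closedness of $X_{+}$ under limits of nets finishes it).

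Two remarks. First, you are right to read positivity of $T$ into the hypotheses: it is not stated explicitly, but since $T=\overline{T}\circ\lambda$ and $\lambda$ is positive, $\overline{T}$ cannot be positive unless $T$ is, and in every application in the paper (Theorem \ref{thm:Ordered-general-correspondence}, Lemma \ref{lem:existence_of_generating_pair}) the representation being extended is positive, so this is the intended reading. Second, however, the specific counterexample you offer to show that positivity of $T$ is indispensable does not work: for one-dimensional $B$, a representation $T(a)=aP$ with $P$ a bounded idempotent is non-degenerate only if $P$ has dense range, and the range of a bounded idempotent is closed (it equals $\ker(I-P)$), which forces $P=I$, a positive operator. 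A correct example requires a larger algebra: take $B=\mathbb{R}^{2}$ with coordinatewise multiplication and cone $\mathbb{R}_{+}^{2}$, take $X=\mathbb{R}^{2}$ with cone $\mathbb{R}_{+}^{2}$, let $E_{1}$ be the (positive) projection onto $\textup{span}\{(1,1)\}$ along $\textup{span}\{(1,-1)\}$, let $E_{2}=I-E_{1}$ (not positive), and set $T(a,b):=aE_{1}+bE_{2}$; then $T$ is unital, hence non-degenerate, the unit $(1,1)$ is a positive bounded approximate identity, yet $\overline{T}$ (which here is essentially $T$ itself, as $\leftcent(B)\cong B$) is not positive. This flaw lies only in your side remark justifying the extra hypothesis; the proof of positivity itself is complete and correct.
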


\subsection{Banach algebra dynamical systems and crossed products\label{sub:unordered-basic-definitions}}

We recall some basic definitions and results from \cite{2011arXiv1104.5151D}.
\begin{defn}
Let $A$ be a Banach algebra, $G$ a locally compact group, and $\alpha:G\to\mbox{Aut}(A)$
a strongly continuous representation of $G$ on $A$. Then we will
call the triple $(A,G,\alpha)$ a \emph{Banach algebra dynamical system. }
\end{defn}
If $(A,G,\alpha)$ is a Banach algebra dynamical system, $C_{c}(G,A)$
can be made into an associative algebra by defining the twisted convolution
\[
(f*g)(s):=\int_{G}f(r)\alpha_{r}(g(r^{-1}s))\, dr\quad(f,g\in C_{c}(G,A),\ s\in G).
\]
Here, as in \cite{2011arXiv1104.5151D}, integrals of compactly supported
continuous Banach space valued functions are defined by duality, following
\cite[Section 3]{Rudin}. 

Let $(A,G,\alpha)$ be a Banach algebra dynamical system and $X$
a normed space. If $\pi:A\to B(X)$ and $U:G\to\mbox{Inv}(X)$ are
representations satisfying 
\[
\pi(\alpha_{s}(a))=U_{s}\pi(a)U_{s}^{-1}\quad(a\in A,\ s\in G),
\]
 we will say that $(\pi,U)$ is a \emph{covariant representation }of
$(A,G,\alpha)$ on $X$. We will say $(\pi,U)$ is \emph{continuous}
if $\pi$ is bounded and $U$ is strongly continuous. We will say
that $(\pi,U)$ is \emph{non-degenerate }if $\pi$ is non-degenerate.

If $(\pi,U)$ is a continuous covariant representation of $(A,G,\alpha)$
on a Banach space $X$, then, as in \cite[Section 3]{2011arXiv1104.5151D},
for every $f\in C_{c}(G,A)$, $\pi\rtimes U(f)\in B(X)$ is defined
by 
\[
\pi\rtimes U(f)x:=\int_{G}\pi(f(r))U_{r}x\, dr\quad(x\in X).
\]
The map $\pi\rtimes U:C_{c}(G,A)\to B(X)$ is then a representation
of the algebra $C_{c}(G,A)$ on $X$, and is called the \emph{integrated
form }of $(\pi,U)$.

Let $(A,G,\alpha)$ be a Banach algebra dynamical system and $\mathcal{R}$
a class of continuous covariant representations of $(A,G,\alpha)$
on Banach spaces. We will always tacitly assume that $\mathcal{R}$
is non-empty. We will say $\mathcal{R}$ is a \emph{uniformly bounded
class }of continuous covariant representations if there exists a constant
$C\geq0$ and a function $\nu:G\to\mathbb{R}_{\geq0}$, which is bounded
on compact subsets of $G$, such that, for all $(\pi,U)\in\mathcal{R}$,
$\|\pi\|\leq C$ and $\|U_{r}\|\leq\nu(r)$ for all $r\in G$. 

Let $(A,G,\alpha)$ be a Banach algebra dynamical system and $\mathcal{R}$
a uniformly bounded class of continuous covariant representations
of $(A,G,\alpha)$ on Banach spaces. It follows that $\|\pi\rtimes U(f)\|\leq C\left(\sup_{r\in\mbox{supp}(f)}\nu(r)\right)\|f\|_{1}$
for all $(\pi,U)\in\mathcal{R}$ and $f\in C_{c}(G,A)$ \cite[Remark 3.3]{2011arXiv1104.5151D}.
We introduce the (hence finite) algebra seminorm $\sigma^{\mathcal{R}}$
on $C_{c}(G,A)$, defined by 
\[
\sigma^{\mathcal{R}}(f):=\sup_{(\pi,U)\in\mathcal{R}}\|\pi\rtimes U(f)\|\quad(f\in C_{c}(G,A)).
\]
The kernel of $\sigma^{\mathcal{R}}$ is a two-sided ideal of $C_{c}(G,A)$,
hence $C_{c}(G,A)/\ker\sigma^{\mathcal{R}}$ is a normed algebra with
norm $\|\cdot\|^{\mathcal{R}}$ induced by $\sigma^{\mathcal{R}}$.
Its completion is called the \emph{crossed product }(\emph{associated
with $(A,G,\alpha)$ and $\mathcal{R}$}), and denoted by $\crossedprod$.
Multiplication in $\crossedprod$ will be denoted by $*$. We denote
the quotient map from $C_{c}(G,A)$ to $\crossedprod$ by $q^{\mathcal{R}}:C_{c}(G,A)\to\crossedprod$.
For any Banach space $X$ and linear map $T:C_{c}(G,A)\to X$, if
$T$ is bounded with respect to the $\sigma^{\mathcal{R}}$ seminorm,
we will denote the canonically induced bounded linear map on $\crossedprod$
by $T^{\mathcal{R}}:\crossedprod\to X$, as detailed in \cite[Section 3]{2011arXiv1104.5151D}.

\subsection{\label{sub:classical-Correspondence}Correspondence between representations
of $(A,G,\alpha)$ and $\crossedprod$}

We briefly describe the General Correspondence Theorem \cite[Theorem 8.1]{2011arXiv1104.5151D},
most of which is formulated as Theorem \ref{thm:General-Correspondence-Theorem}
below. In the presence of a bounded approximate left identity of $A$,
the General Correspondence Theorem describes a bijection between the
non-degenerate $\mathcal{R}$-continuous (to be defined below) covariant
representations of a Banach algebra dynamical system $(A,G,\alpha)$
and the non-degenerate bounded representations of the associated crossed
product $\crossedprod$. In Section \ref{sec:Pre-ordered-Banach-algebra-dynamical-systems-and-crossed-prods}
we will adapt the results from this section to pre-ordered Banach
algebra dynamical systems and the associated pre-ordered crossed products.
\begin{defn}
Let $(A,G,\alpha)$ be a Banach algebra dynamical system and $\mathcal{R}$
a uniformly bounded class of continuous covariant representations
of $(A,G,\alpha)$ on Banach spaces. If $(\pi,U)$ is a continuous
covariant representation of $(A,G,\alpha)$ on a Banach space $X$,
and $\pi\rtimes U:C_{c}(G,A)\to B(X)$ is bounded with respect to
$\sigma^{\mathcal{R}}$, we will say $(\pi,U)$ is $\mathcal{R}$\emph{-continuous}.
\end{defn}
The proof of the General Correspondence Theorem proceeds through an
application of Theorem \ref{thm:extension-theorem}, which requires
the existence of a bounded approximate left identity of $\crossedprod$.
The following theorem makes precise how this (and its right-sided
counterpart) is inherited from $A$.
\begin{thm}
\label{thm:bounded-approx-id-incrossed-prod}\cite[Theorem 4.4]{2011arXiv1104.5151D}
Let $(A,G,\alpha)$ be a Banach algebra dynamical system, and let
$\mathcal{R}$ be a uniformly bounded class of continuous covariant
representations of $(A,G,\alpha)$ on Banach spaces. Let $A$ have
a bounded approximate left \textup{(}right\textup{)} identity $(u_{i})$.
Let $\mathcal{Z}$ be a neighbourhood base of $e\in G$ of which all
elements are contained in a fixed compact subset of $G$. For each
$V\in\mathcal{Z}$, let $z_{V}\in C_{c}(G)$ be positive, supported
in $V$, and have integral equal to one. Then the net 
\[
\left(q^{\mathcal{R}}(z_{V}\otimes u_{i})\right),
\]
where $(V,i)\leq(W,j)$ is defined to mean $i\leq j$ and $W\subseteq V$,
is a bounded approximate left \textup{(}right\textup{)} identity of
$\crossedprod$.
\end{thm}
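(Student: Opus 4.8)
The plan is to verify the two defining properties of a left approximate identity — uniform boundedness and the approximation property — working throughout with the seminorm estimate $\sigma^{\mathcal{R}}(f)\le C\big(\sup_{r\in\mathrm{supp}(f)}\nu(r)\big)\|f\|_1$ recorded before the definition of $\crossedprod$. Since $\crossedprod$ is the completion of $C_c(G,A)/\ker\sigma^{\mathcal{R}}$, the set $q^{\mathcal{R}}(C_c(G,A))$ is dense in it, and a uniformly bounded net which acts as a left identity on a dense subset of a Banach algebra automatically does so on the whole algebra (a routine $3\varepsilon$-argument). Hence it suffices to (i) bound the net, and (ii) show $q^{\mathcal{R}}(z_V\otimes u_i)*q^{\mathcal{R}}(g)\to q^{\mathcal{R}}(g)$ for each $g\in C_c(G,A)$. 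For boundedness, write $M$ for a bound on $(u_i)$; then $\|z_V\otimes u_i\|_1=\|u_i\|\int_G z_V\le M$, and every $z_V$ is supported in the fixed compact set $K\supseteq\bigcup_V V$, so $\sigma^{\mathcal{R}}(z_V\otimes u_i)\le C\big(\sup_{r\in K}\nu(r)\big)M$, a bound independent of $(V,i)$ which is finite because $\nu$ is bounded on compacts.

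For (ii), since $q^{\mathcal{R}}$ is an algebra homomorphism the product equals $q^{\mathcal{R}}\big((z_V\otimes u_i)*g\big)$, so I must show $\sigma^{\mathcal{R}}\big((z_V\otimes u_i)*g-g\big)\to0$. This difference is supported in the fixed compact set $L:=\overline{K\cdot\mathrm{supp}(g)}\cup\mathrm{supp}(g)$, so the seminorm estimate together with $\|\cdot\|_1\le\mu(L)\|\cdot\|_\infty$ reduces matters to proving $\|(z_V\otimes u_i)*g-g\|_\infty\to0$. Using $z_V\ge0$ and $\int_G z_V=1$, a pointwise computation gives
\[
\big((z_V\otimes u_i)*g\big)(s)-g(s)=\int_G z_V(r)\big[u_i\alpha_r(g(r^{-1}s))-g(s)\big]\,dr,
\]
and splitting the bracket as $u_i\big[\alpha_r(g(r^{-1}s))-g(s)\big]+\big[u_i g(s)-g(s)\big]$ yields
\[
\|(z_V\otimes u_i)*g-g\|_\infty\le M\sup_{r\in V}\,\sup_{s\in G}\|\alpha_r(g(r^{-1}s))-g(s)\|+\sup_{s\in G}\|u_i g(s)-g(s)\|.
\]

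The hard part, which I would isolate as two separate inputs, is controlling these two suprema. The first tends to $0$ as $V$ shrinks because $r\mapsto\sup_{s}\|\alpha_r(g(r^{-1}s))-g(s)\|$ is continuous at $r=e$ with value $0$; this "left uniform continuity" follows from the strong continuity of $\alpha$, its uniform boundedness on a compact neighbourhood of $e$, and the uniform continuity of the compactly supported function $g$. The second tends to $0$ as $i$ increases because a bounded approximate left identity converges uniformly on the compact set $g(G)=g(\mathrm{supp}(g))\cup\{0\}\subseteq A$ (cover it by finitely many small balls and use boundedness of $(u_i)$). Since the first quantity depends only on $V$ and the second only on $i$, the product ordering on $(V,i)$ makes both small simultaneously — which is precisely what the directed set in the statement is designed to achieve.

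Finally, the right-identity case is entirely parallel. After the left-invariant substitution $r=s\rho$ one obtains $\big(g*(z_V\otimes u_i)\big)(s)=\int_G g(s\rho)\,z_V(\rho^{-1})\,\alpha_{s\rho}(u_i)\,d\rho$, and the same split works, now using a bounded approximate \emph{right} identity so that $g(s)u_i\to g(s)$ uniformly on $g(G)$ and $\alpha_\rho(u_i)\to u_i$, $g(s\rho)\to g(s)$ as $\rho\to e$. The only extra bookkeeping is the modular function entering through $\int_G z_V(\rho^{-1})\,d\rho$; being continuous with $\Delta(e)=1$, it contributes a factor tending to $1$ as $V$ shrinks and therefore does not affect the limit. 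I expect this modular-function normalization and the uniform-in-$s$ continuity lemma to be the only genuinely delicate points.
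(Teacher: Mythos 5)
The paper itself does not prove this theorem; it imports it by citation from the first paper in the series, so the comparison below is with the standard argument used there. Your reduction — boundedness of the net via $\sigma^{\mathcal{R}}(z_V\otimes u_i)\leq C\bigl(\sup_{r\in K}\nu(r)\bigr)M$, the $3\varepsilon$-argument on the dense subalgebra $q^{\mathcal{R}}(C_c(G,A))$, the support estimate reducing $\sigma^{\mathcal{R}}$-convergence to sup-norm convergence, and the entire left-identity half (split into $u_i\bigl[\alpha_r(g(r^{-1}s))-g(s)\bigr]$ plus $\bigl[u_ig(s)-g(s)\bigr]$, handled by continuity of $r\mapsto i_G(r)g$ at $e$ and uniform convergence of $u_ia\to a$ on compact subsets of $A$) — is correct and follows essentially the same route as the cited proof.

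The right-identity half, however, has a genuine gap. After your substitution the integrand is $g(s\rho)\,\alpha_{s\rho}(u_i)$: the approximate identity enters \emph{twisted} by $\alpha_{s\rho}$, and neither of your stated inputs matches this. The input ``$\alpha_\rho(u_i)\to u_i$ as $\rho\to e$'' would have to hold uniformly in $i$, because the product order on the index set forces you to control the error uniformly over all $i\geq i_0$ once $V\subseteq V_0$ is fixed; and that uniform statement is false in general. For example, let $A=C_0(\mathbb{R})$ with pointwise multiplication, $G=\mathbb{R}$ acting by translation, and let $u_n$ be the $1$-bounded approximate identity with $u_n=1$ on $[-n,n]$, $u_n=0$ off $[-n-\tfrac1n,\,n+\tfrac1n]$, linearly interpolated; then $\|\alpha_t(u_n)-u_n\|_\infty=1$ for every $t\neq0$ and all $n\geq1/|t|$. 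Likewise ``$g(s)u_i\to g(s)$ uniformly on $g(G)$'' is not the statement you need, since $u_i$ never multiplies $g(s)$ directly: the twist sits in between. The repair is to decompose so that the term $\alpha_\rho(u_i)-u_i$ never appears:
\[
g(s\rho)\alpha_{s\rho}(u_i)-g(s)=\bigl[g(s\rho)-g(s)\bigr]\alpha_{s\rho}(u_i)+\alpha_{s\rho}\Bigl(\alpha_{(s\rho)^{-1}}\bigl(g(s)\bigr)\,u_i-\alpha_{(s\rho)^{-1}}\bigl(g(s)\bigr)\Bigr).
\]
The first bracket is handled by uniform continuity of $g$, the bound $\|u_i\|\leq M$, and local uniform boundedness of $\alpha$ (Banach--Steinhaus). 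For the second, note that $(s,\rho)\mapsto\alpha_{(s\rho)^{-1}}(g(s))$ is jointly continuous (strong continuity plus Banach--Steinhaus), hence has compact image $F\subseteq A$ as $s$ ranges over $\mathrm{supp}(g)\cdot K$ and $\rho$ over $K^{-1}$, where $K$ is the fixed compact set containing all $V$; your covering argument then gives $au_i\to a$ uniformly for $a\in F$, and the outer $\alpha_{s\rho}$ is uniformly bounded. With this decomposition the first term is small for all small $V$ independently of $i$, and the second is small for all large $i$ independently of $V$, which is exactly what the product order requires. Your treatment of the modular function (the total mass $\int_G z_V(\rho^{-1})\,d\rho\to1$) is fine and is indeed the only other point needing care.
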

Let $(A,G,\alpha)$ be a Banach algebra dynamical system. We define
the maps $i_{A}:A\to\mbox{End}(C_{c}(G,A))$ and $i_{G}:G\to\mbox{End}(C_{c}(G,A))$
by 
\begin{eqnarray*}
(i_{A}(a)f)(s) & := & af(s),\\
(i_{G}(r)f)(s) & := & \alpha_{r}(f(r^{-1}s)),
\end{eqnarray*}
for $f\in C_{c}(G,A)$, $a\in A$ and $r,s\in G$. The maps $i_{A}(a)$
and $i_{G}(r)$ are bounded on $C_{c}(G,A)$ with respect to $\sigma^{\mathcal{R}}$
\cite[Lemma 6.3]{2011arXiv1104.5151D}, hence we can define the maps
$i_{A}^{\mathcal{R}}:A\to B(\crossedprod)$ and $i_{G}^{\mathcal{R}}:G\to B(\crossedprod)$,
by $i_{A}^{\mathcal{R}}(a):=i_{A}(a)^{\mathcal{R}}$ and $i_{G}^{\mathcal{R}}(r):=i_{G}(r)^{\mathcal{R}}$
in the notation of Section \ref{sub:unordered-basic-definitions},
for $a\in A$ and $r\in G$. Moreover, the maps $a\mapsto i_{A}^{\mathcal{R}}(a)$
and $r\mapsto i_{G}^{\mathcal{R}}(r)$ map $A$ and $G$ into $\leftcent(\crossedprod)$
\cite[Proposition 6.4]{2011arXiv1104.5151D}. If $A$ has a bounded
approximate left  identity and $\mathcal{R}$ is a uniformly bounded
class of non-degenerate continuous covariant representations of $(A,G,\alpha)$
on Banach spaces, then $(i_{A}^{\mathcal{R}},i_{G}^{\mathcal{R}})$
is a non-degenerate $\mathcal{R}$-continuous covariant representation
of $(A,G,\alpha)$ on $\crossedprod$, and the integrated form $(i_{A}^{\mathcal{R}}\rtimes i_{G}^{\mathcal{R}})^{\mathcal{R}}$
equals the left regular representation of $\crossedprod$ \cite[Theorem 7.2]{2011arXiv1104.5151D}. 

This pair $(i_{A}^{\mathcal{R}},i_{G}^{\mathcal{R}})$ can be used
to ``generate'' non-degenerate continuous covariant representations
of $(A,G,\alpha)$ from non-degenerate bounded representations of
$\crossedprod$. We will investigate this further in Section \ref{sec:Uniqueness-of-the-crossed-prod},
but its key role becomes already apparent in the following result,
giving an explicit bijection between the non-degenerate $\mathcal{R}$-continuous
covariant representations of $(A,G,\alpha)$ and the non-degenerate
bounded representations of $\crossedprod$.
\begin{thm}
\textup{(}General Correspondence Theorem, cf.\,\cite[Theorem 8.1]{2011arXiv1104.5151D}\textup{)}
\label{thm:General-Correspondence-Theorem}Let the triple $(A,G,\alpha)$ be
a Banach algebra dynamical system, where $A$ has a bounded approximate
left identity. Let $\mathcal{R}$ be a uniformly bounded class of
non-degenerate continuous covariant representations of $(A,G,\alpha)$
on Banach spaces. Then the map $(\pi,U)\mapsto(\pi\rtimes U)^{\mathcal{R}}$
is a bijection between the non-degenerate $\mathcal{R}$-continuous
covariant representations of $(A,G,\alpha)$ on Banach spaces and
the non-degenerate bounded representations of $\crossedprod$ on such
spaces.

More precisely:
\begin{enumerate}
\item If the pair $(\pi,U)$ is a non-degenerate $\mathcal{R}$-continuous covariant
representation of $(A,G,\alpha)$ on a Banach space $X_{\pi}$, then
$(\pi\rtimes U)^{\mathcal{R}}$ is a non-degenerate bounded representation
of $\crossedprod$ on $X_{\pi}$, and 
\[
(\overline{(\pi\rtimes U)^{\mathcal{R}}}\circ i_{A}^{\mathcal{R}},\overline{(\pi\rtimes U)^{\mathcal{R}}}\circ i_{G}^{\mathcal{R}})=(\pi,U),
\]
where $\overline{(\pi\rtimes U)^{\mathcal{R}}}$ is the non-degenerate
bounded representation of \textup{$\leftcent(\crossedprod)$ }as in
Theorem \ref{thm:extension-theorem}\textup{.} 
\item If $T$ is a non-degenerate bounded representation of $\crossedprod$
on a Banach space $X_{T}$, then $(\overline{T}\circ i_{A}^{\mathcal{R}},\overline{T}\circ i_{G}^{\mathcal{R}})$
is a non-degenerate $\mathcal{R}$-continuous covariant representation
of $(A,G,\alpha)$ on $X_{T}$, and
\[
(\overline{T}\circ i_{A}^{\mathcal{R}}\rtimes\overline{T}\circ i_{G}^{\mathcal{R}})^{\mathcal{R}}=T
\]
where $\overline{T}$ is the non-degenerate bounded representation
of \textup{$\leftcent(\crossedprod)$ }as in Theorem \ref{thm:extension-theorem}\textup{.} 
\end{enumerate}
\end{thm}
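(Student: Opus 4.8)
The plan is to show that the assignment $(\pi,U)\mapsto(\pi\rtimes U)^{\mathcal{R}}$ of the statement and the assignment $T\mapsto(\overline{T}\circ i_{A}^{\mathcal{R}},\overline{T}\circ i_{G}^{\mathcal{R}})$ are mutually inverse, where $\overline{T}$ denotes the canonical extension of a non-degenerate bounded representation $T$ of $\crossedprod$ to $\leftcent(\crossedprod)$ furnished by Theorem \ref{thm:extension-theorem}. This extension is available precisely because Theorem \ref{thm:bounded-approx-id-incrossed-prod} equips $\crossedprod$ with a bounded approximate left identity. Parts (1) and (2) assert exactly that the two composites of these assignments are the respective identity maps, so once both assignments are shown to be well defined and both round-trip identities are verified, the bijection follows.

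First I would check that each assignment lands in the correct class. For (1), $\mathcal{R}$-continuity of $(\pi,U)$ says precisely that $\pi\rtimes U$ is $\sigma^{\mathcal{R}}$-bounded, so it descends to a bounded algebra homomorphism $(\pi\rtimes U)^{\mathcal{R}}$ on $\crossedprod$; its non-degeneracy I would deduce from that of $\pi$ by feeding in the bounded approximate left identity $(u_{i})$ of $A$, using the relation $(\pi\rtimes U)^{\mathcal{R}}(q^{\mathcal{R}}(i_{A}(a)g))=\pi(a)(\pi\rtimes U)^{\mathcal{R}}(q^{\mathcal{R}}(g))$ together with $q^{\mathcal{R}}(i_{A}(u_{i})g)\to q^{\mathcal{R}}(g)$. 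For (2), since $(i_{A}^{\mathcal{R}},i_{G}^{\mathcal{R}})$ is already a non-degenerate $\mathcal{R}$-continuous covariant representation on $\crossedprod$ and $\overline{T}$ is a bounded homomorphism, both the covariance relation and the strong continuity transport to $(\overline{T}\circ i_{A}^{\mathcal{R}},\overline{T}\circ i_{G}^{\mathcal{R}})$; its non-degeneracy would follow from the identity $\overline{T}(L)T(c)=T(Lc)$ of Theorem \ref{thm:extension-theorem} via the same approximate-identity device, and its $\mathcal{R}$-continuity will emerge from the round-trip computation below.

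The core of the argument rests on the identity $\overline{S}(L)S(c)=S(Lc)$ for $L\in\leftcent(\crossedprod)$ and $c\in\crossedprod$, and the commuting triangle $\overline{S}\circ\lambda=S$, both from Theorem \ref{thm:extension-theorem}. For (2), writing $(\pi,U):=(\overline{T}\circ i_{A}^{\mathcal{R}},\overline{T}\circ i_{G}^{\mathcal{R}})$ and using that $\overline{T}$ is a bounded homomorphism which commutes with the compactly supported $\leftcent(\crossedprod)$-valued integral, I would compute for $f\in C_{c}(G,A)$ that
\[
(\pi\rtimes U)(f)=\overline{T}\Big(\int_{G}i_{A}^{\mathcal{R}}(f(r))\, i_{G}^{\mathcal{R}}(r)\, dr\Big)=\overline{T}\big((i_{A}^{\mathcal{R}}\rtimes i_{G}^{\mathcal{R}})(f)\big)=\overline{T}(\lambda(q^{\mathcal{R}}(f)))=T(q^{\mathcal{R}}(f)),
\]
where the penultimate equality invokes $(i_{A}^{\mathcal{R}}\rtimes i_{G}^{\mathcal{R}})^{\mathcal{R}}=\lambda$ and the last one $\overline{T}\circ\lambda=T$. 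This simultaneously shows $(\pi,U)$ is $\mathcal{R}$-continuous and that $(\pi\rtimes U)^{\mathcal{R}}$ agrees with $T$ on the dense range of $q^{\mathcal{R}}$, hence everywhere. For (1), writing $S:=(\pi\rtimes U)^{\mathcal{R}}$ and $\overline{S}$ for its extension, I would verify $\overline{S}(i_{A}^{\mathcal{R}}(a))S(q^{\mathcal{R}}(g))=S(q^{\mathcal{R}}(i_{A}(a)g))=\pi(a)S(q^{\mathcal{R}}(g))$ straight from $(i_{A}(a)g)(s)=ag(s)$, and similarly $\overline{S}(i_{G}^{\mathcal{R}}(r))S(q^{\mathcal{R}}(g))=U_{r}S(q^{\mathcal{R}}(g))$ from $(i_{G}(r)g)(s)=\alpha_{r}(g(r^{-1}s))$, the covariance of $(\pi,U)$, and the left-invariance of Haar measure used to substitute $s=rt$. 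Non-degeneracy of $S$ then promotes these dense-range identities to $\overline{S}\circ i_{A}^{\mathcal{R}}=\pi$ and $\overline{S}\circ i_{G}^{\mathcal{R}}=U$.

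I expect the main obstacle to be the careful bookkeeping needed to push $\overline{T}$ (respectively $\overline{S}$) through the defining integral and to justify that the integrand $r\mapsto i_{A}^{\mathcal{R}}(f(r))\,i_{G}^{\mathcal{R}}(r)$ genuinely takes values in the closed subalgebra $\leftcent(\crossedprod)$ on which the extension is defined, so that $\overline{T}\big((i_{A}^{\mathcal{R}}\rtimes i_{G}^{\mathcal{R}})(f)\big)$ is meaningful and equals $(\overline{T}\circ i_{A}^{\mathcal{R}}\rtimes\overline{T}\circ i_{G}^{\mathcal{R}})(f)$. The remaining delicate points are the Haar-measure change of variables in the $i_{G}$ computation and the approximate-identity arguments underpinning the several non-degeneracy claims; these are routine given the cited results, but they must be carried out carefully enough that each dense-range equality can be upgraded to a genuine operator identity.
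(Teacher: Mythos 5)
Your proposal is correct and follows essentially the same route as the proof this paper relies on: the theorem is not reproved here but imported from \cite[Theorem 8.1]{2011arXiv1104.5151D}, whose argument is exactly your combination of Theorem \ref{thm:extension-theorem} (made applicable by the bounded approximate left identity of $\crossedprod$ from Theorem \ref{thm:bounded-approx-id-incrossed-prod}), the identities $\overline{T}\circ\lambda=T$ and $\overline{T}(L)T(c)=T(Lc)$, and the facts that $(i_{A}^{\mathcal{R}},i_{G}^{\mathcal{R}})$ is a non-degenerate $\mathcal{R}$-continuous covariant representation with $(i_{A}^{\mathcal{R}}\rtimes i_{G}^{\mathcal{R}})^{\mathcal{R}}=\lambda$. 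The bookkeeping you flag (pushing $\overline{T}$ through the only strongly convergent integral, the strong continuity of $\overline{T}\circ i_{G}^{\mathcal{R}}$, and the non-degeneracy claims) is indeed the delicate part, and it is resolved just as you anticipate: one evaluates on the dense span of vectors $T(c)x$ using $\overline{T}(L)T(c)=T(Lc)$, which is precisely the content of the result cited as \cite[Lemma 4.1]{CPII} in Lemma \ref{lem:existence_of_generating_pair} of this paper.
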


\section{Pre-ordered Banach algebra dynamical systems and crossed products\label{sec:Pre-ordered-Banach-algebra-dynamical-systems-and-crossed-prods}}

In this section we study pre-ordered Banach algebra dynamical systems
and their associated crossed products. After the preliminary Section
\ref{sub:Pre-ordered-Banach-algebra-dynamical-systems}, in Section
\ref{sub:Crossed-products-of-pre-ordered-banach-alg} we will define
pre-ordered crossed products associated with pre-ordered Banach algebra
dynamical systems, and describe properties of the cone of such pre-ordered
crossed products (cf.\,Theorem \ref{thm:ordered-crossed-prod-properties}).
Finally, Theorem \ref{thm:Ordered-general-correspondence} in Section
\ref{sub:Correspondence-between-positive-representations} will give
an adaptation of the General Correspondence Theorem (Theorem \ref{thm:General-Correspondence-Theorem})
to the ordered context.

\subsection{Pre-ordered Banach algebra dynamical systems\label{sub:Pre-ordered-Banach-algebra-dynamical-systems}}

We introduce pre-ordered Banach algebra dynamical systems $(A,G,\alpha)$,
and verify that the twisted convolution as defined in Section \ref{sub:unordered-basic-definitions}
gives $C_{c}(G,A)$ a pre-ordered algebra structure. Furthermore,
Lemma \ref{lem:integrated-froms-positive} shows that positive continuous
covariant representations of a pre-ordered Banach algebra dynamical
systems $(A,G,\alpha)$ have positive integrated forms.
\begin{defn}
Let $A$ be a pre-ordered Banach algebra with closed cone, $G$ a
locally compact group, and $\alpha:G\to\mbox{Aut}_{+}(A)$ a strongly
continuous positive representation of $G$ on $A$. Then we will call
the triple $(A,G,\alpha)$ a \emph{pre-ordered} \emph{Banach algebra
dynamical system. }\end{defn}
\begin{lem}
\label{lemma:twisted-convolution-positive}If $(A,G,\alpha)$ is a
pre-ordered Banach algebra dynamical system, with $A$ having a closed
cone, then $(C_{c}(G,A),C_{c}(G,A_{+}))$, with twisted convolution
as defined in Section \ref{sub:unordered-basic-definitions}, is a
pre-ordered algebra.\end{lem}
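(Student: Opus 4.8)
The plan is to verify directly that $(C_c(G,A), C_c(G,A_+))$ satisfies the definition of a pre-ordered algebra from Section \ref{sub:Ordered-vector-spaces}: namely that $C_c(G,A_+)$ is a cone in the vector space $C_c(G,A)$, and that it is closed under the twisted convolution product, i.e.\ $C_c(G,A_+) * C_c(G,A_+) \subseteq C_c(G,A_+)$.

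First I would check that $C_c(G,A_+)$ is a cone. This is essentially pointwise and immediate: if $f,g \in C_c(G,A_+)$, then $(f+g)(s) = f(s)+g(s) \in A_+$ since $A_+$ is closed under addition, and for $\lambda \geq 0$ we have $(\lambda f)(s) = \lambda f(s) \in A_+$; moreover $f+g$ and $\lambda f$ remain continuous with compact support. Thus $C_c(G,A_+) + C_c(G,A_+) \subseteq C_c(G,A_+)$ and $\lambda\, C_c(G,A_+) \subseteq C_c(G,A_+)$, so $C_c(G,A_+)$ is a cone and $(C_c(G,A), C_c(G,A_+))$ is a pre-ordered vector space.

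The substantive step is showing positivity of the product, and this is where I expect the main obstacle. Given $f,g \in C_c(G,A_+)$, I must show $(f*g)(s) = \int_G f(r)\,\alpha_r(g(r^{-1}s))\,dr \in A_+$ for every $s \in G$. The integrand $r \mapsto f(r)\,\alpha_r(g(r^{-1}s))$ takes values in $A_+$: indeed $f(r) \in A_+$, and $\alpha_r(g(r^{-1}s)) \in A_+$ because $g(r^{-1}s) \in A_+$ and $\alpha_r \in \mathrm{Aut}_+(A)$ maps $A_+$ into $A_+$; the product of two positive elements lies in $A_+$ since $A$ is a pre-ordered algebra, so $A_+ \cdot A_+ \subseteq A_+$. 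The remaining point is that the $A$-valued integral of an $A_+$-valued compactly supported continuous function again lies in $A_+$. This is the crux: the integral is defined by duality following \cite[Section 3]{Rudin}, and one must invoke the fact that $A_+$ is closed. The cleanest argument is that the integral of a continuous compactly supported $A_+$-valued function is a limit (in norm) of Riemann-type sums $\sum_i \mu(E_i)\, h(r_i)$ with nonnegative coefficients $\mu(E_i) \geq 0$ and values $h(r_i) \in A_+$; each such sum lies in $A_+$ since $A_+$ is a cone, and the limit lies in $A_+$ precisely because $A_+$ is closed. Hence $(f*g)(s) \in A_+$ for all $s$, giving $f*g \in C_c(G,A_+)$.

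Combining the two steps, $C_c(G,A_+)$ is a cone satisfying $C_c(G,A_+) \cdot C_c(G,A_+) \subseteq C_c(G,A_+)$ under twisted convolution, which is exactly the condition for $(C_c(G,A), C_c(G,A_+))$ to be a pre-ordered algebra. The only genuinely nontrivial ingredient is the closedness of $A_+$ (part of the standing hypothesis on a pre-ordered Banach algebra dynamical system), which is what legitimizes passing the positivity of the integrand through the limit defining the vector-valued integral.
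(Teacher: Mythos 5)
Your proof is correct, and its skeleton matches the paper's: show the integrand $r \mapsto f(r)\alpha_r(g(r^{-1}s))$ is $A_+$-valued (using positivity of each $\alpha_r$ and $A_+ \cdot A_+ \subseteq A_+$), then argue that the vector-valued integral of a continuous compactly supported $A_+$-valued function stays in $A_+$ because $A_+$ is closed. (The verification that $C_c(G,A_+)$ is a cone is, as you say, immediate, and the paper does not even record it.) The two arguments diverge only at the integration step. The paper normalizes by $\mu(\mathrm{supp}(f))$ (hence assumes $f \neq 0$, the case $f=0$ being trivial) so that the integral becomes a barycenter with respect to a probability measure, and then cites Rudin's Theorem 3.27: the integral lies in the closed convex hull of the integrand's values, which is contained in $A_+$ since a closed cone is closed and convex. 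You instead assert that the duality-defined integral is a norm limit of Riemann-type sums $\sum_i \mu(E_i)\,h(r_i)$ with $\mu(E_i) \geq 0$ and $h(r_i) \in A_+$; each such sum lies in the cone, and closedness finishes. This is a valid and arguably more elementary route, but the Riemann-sum approximation is precisely the point your proposal leaves unproven: it requires uniform continuity of the integrand with respect to the group uniformity (note that $G$ need not be metrizable, so one cannot speak of partitions of small diameter without this), a finite partition of the compact support into Borel sets subordinate to translates of a small neighbourhood of $e$, and the estimate $\bigl\|\int_G h(r)\,dr - \sum_i \mu(E_i)h(r_i)\bigr\| \leq \varepsilon\,\mu(\mathrm{supp}(h))$, obtained by testing against functionals of norm at most one. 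Once that is filled in, your argument is complete; the paper's appeal to the closed-convex-hull theorem packages the same analytic content into a single citation and needs only the normalization trick in exchange.
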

\begin{proof}
Let $f,g\in C_{c}(G,A_{+})$ with $f\neq0$. By \cite[Theorem 3.27]{Rudin},
for every $s\in G$, 
\[
\frac{(f*g)(s)}{\mu(\textup{supp}(f))}=\int_{\textup{supp}(f)}f(r)\alpha_{r}(g(r^{-1}s))\,\frac{dr}{\mu(\textup{supp}(f))},
\]
where $\mu$ denotes the chosen left Haar measure on $G$, lies in
the closed convex hull of $\{f(r)\alpha_{r}(g(r^{-1}s)):r\in\textup{supp}(f)\}\subseteq A_{+}$.
Since $A_{+}$ is itself closed and convex, the result follows.\end{proof}
\begin{defn}
Let $(A,G,\alpha)$ be a pre-ordered Banach algebra dynamical system.
Let $(\pi,U)$ be a covariant representation of $(A,G,\alpha)$ on
a pre-ordered normed space $X$. If both $\pi$ and $U$ are positive
representations, we will say that the covariant representation $(\pi,U)$
is \emph{positive}.\end{defn}
\begin{lem}
\label{lem:integrated-froms-positive}Let $(A,G,\alpha)$ be a pre-ordered
Banach algebra dynamical system with $A$ having a closed cone, and
$(\pi,U)$ a positive continuous covariant representation of $(A,G,\alpha)$
on a pre-ordered Banach space $X$ with a closed cone. Then the integrated
form $\pi\rtimes U:C_{c}(G,A)\to B(X)$ is a positive algebra representation.\end{lem}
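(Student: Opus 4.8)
The plan is to observe that the only genuinely new content is positivity, since the fact that $\pi\rtimes U:C_{c}(G,A)\to B(X)$ is an algebra homomorphism is already part of the unordered theory recalled in Section \ref{sub:unordered-basic-definitions}. Thus it suffices to show that $\pi\rtimes U(f)\in B(X)_{+}$ for every $f\in C_{c}(G,A_{+})$, i.e., that $\pi\rtimes U(f)\,X_{+}\subseteq X_{+}$.

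First I would reduce the statement to a pointwise claim about the integrand. Fix $f\in C_{c}(G,A_{+})$ and $x\in X_{+}$. For each $r\in G$ we have $f(r)\in A_{+}$, so positivity of $\pi$ gives $\pi(f(r))\in B(X)_{+}$; and since $U$ is a positive representation, $U_{r}\in\mbox{Inv}_{+}(X)$, so $U_{r}x\in X_{+}$. Combining these, $\pi(f(r))U_{r}x\in X_{+}$ for every $r\in G$. Hence the integrand $r\mapsto\pi(f(r))U_{r}x$ is a continuous, compactly supported function taking values in the closed cone $X_{+}$.

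The remaining step is an averaging argument identical to the one in the proof of Lemma \ref{lemma:twisted-convolution-positive}. Assuming $f\neq0$ (the case $f=0$ being trivial, as then $\pi\rtimes U(f)=0$ maps $X_{+}$ into $\{0\}\subseteq X_{+}$), the support of $f$ has strictly positive Haar measure, since $\mbox{supp}(f)$ contains a nonempty open set. By \cite[Theorem 3.27]{Rudin}, the normalized integral
\[
\frac{\pi\rtimes U(f)x}{\mu(\textup{supp}(f))}=\int_{\textup{supp}(f)}\pi(f(r))U_{r}x\,\frac{dr}{\mu(\textup{supp}(f))}
\]
lies in the closed convex hull of $\{\pi(f(r))U_{r}x:r\in\textup{supp}(f)\}\subseteq X_{+}$. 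As $X_{+}$ is closed and convex, this element lies in $X_{+}$, and rescaling by $\mu(\textup{supp}(f))\geq0$ (using that $X_{+}$ is a cone) yields $\pi\rtimes U(f)x\in X_{+}$, as desired.

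I do not expect a serious obstacle here; the argument is essentially a verbatim transcription of Lemma \ref{lemma:twisted-convolution-positive}, with $g(r^{-1}s)$ replaced by $U_{r}x$. The only points requiring a moment's care are that the integrand is genuinely $X_{+}$-valued, which is exactly where the positivity of \emph{both} $\pi$ and $U$ is used in tandem, and that the closedness of $X_{+}$ is what allows the convex-hull membership to pass to the limit defining the integral.
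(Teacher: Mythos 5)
Your proof is correct and is essentially the same as the paper's: the paper likewise notes that positivity of $\pi$ and $U$ puts the integrand $\pi(f(r))U_{r}x$ in the closed cone $X_{+}$, and then invokes the closed-convex-hull argument of Lemma \ref{lemma:twisted-convolution-positive} to conclude $\int_{G}\pi(f(r))U_{r}x\,dr\in X_{+}$. Your additional remarks (the trivial case $f=0$, positivity of $\mu(\mathrm{supp}(f))$, rescaling within the cone) only make explicit details the paper leaves implicit.
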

\begin{proof}
Let $f\in C_{c}(G,A_{+})$. Since $(\pi,U)$ is positive, we have
$\pi(f(r))U_{r}x\in X_{+}$ for all $x\in X_{+}$ and $r\in G$. Since
$X_{+}$ is closed and convex, we obtain $\int_{G}\pi(f(r))U_{r}x\, dr\in X_{+}$
as in the proof of Lemma \ref{lemma:twisted-convolution-positive}. 
\end{proof}

\subsection{Crossed products associated with pre-ordered Banach algebra dynamical
systems\label{sub:Crossed-products-of-pre-ordered-banach-alg}}

In this section we will describe the construction of pre-ordered crossed
products associated with pre-ordered Banach algebra dynamical systems.
The construction as a Banach algebra is as described in Section \ref{sub:unordered-basic-definitions},
so we will focus mainly on the properties of the order structure.
\begin{lem}
\label{lem:Cc(G,A)-quotient-is-ordered-normed-algebra}Let $(A,G,\alpha)$
be a pre-ordered Banach algebra dynamical system, with $A$ having
a closed cone, and $\mathcal{R}$ a uniformly bounded class of continuous
covariant representations of $(A,G,\alpha)$ on Banach spaces. Then
the space $C_{c}(G,A)/\ker\sigma^{\mathcal{R}}$, with norm $\|\cdot\|^{\mathcal{R}}$
induced by $\sigma^{\mathcal{R}}$ and pre-ordered by the quotient
cone $q^{\mathcal{R}}(C_{c}(G,A_{+}))$, is a pre-ordered normed algebra.\end{lem}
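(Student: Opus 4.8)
The plan is to verify that $C_c(G,A)/\ker\sigma^{\mathcal{R}}$ is a pre-ordered normed algebra by checking three things: that the quotient cone $q^{\mathcal{R}}(C_c(G,A_+))$ is genuinely a cone, that it is compatible with the multiplication (i.e.\ $q^{\mathcal{R}}(C_c(G,A_+)) \cdot q^{\mathcal{R}}(C_c(G,A_+)) \subseteq q^{\mathcal{R}}(C_c(G,A_+))$), and that the data already assembled gives us a normed algebra. The last point is immediate: by the construction recalled in Section \ref{sub:unordered-basic-definitions}, $\ker\sigma^{\mathcal{R}}$ is a two-sided ideal and $C_c(G,A)/\ker\sigma^{\mathcal{R}}$ is a normed algebra with norm $\|\cdot\|^{\mathcal{R}}$, so no work is needed there.

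\emph{First} I would recall from Section \ref{sub:Ordered-vector-spaces} that for any subspace $W$ and any cone $C$ in a pre-ordered vector space, the image $q(C)$ under the quotient map is again a cone, and $(V/W, q(C))$ is a pre-ordered vector space. Applying this with $V = C_c(G,A)$, $W = \ker\sigma^{\mathcal{R}}$, and $C = C_c(G,A_+)$, and using Lemma \ref{lemma:twisted-convolution-positive} to know that $(C_c(G,A), C_c(G,A_+))$ is a pre-ordered algebra, shows that $q^{\mathcal{R}}(C_c(G,A_+))$ is a cone making the quotient a pre-ordered vector space. The only remaining structural property to verify is that this cone is closed under the induced multiplication.

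\emph{Second} I would check the multiplicative compatibility. Since $q^{\mathcal{R}}$ is an algebra homomorphism (the quotient of $C_c(G,A)$ by a two-sided ideal), for $\bar f, \bar g \in q^{\mathcal{R}}(C_c(G,A_+))$ choose representatives $f, g \in C_c(G,A_+)$ with $q^{\mathcal{R}}(f) = \bar f$ and $q^{\mathcal{R}}(g) = \bar g$. Then $\bar f * \bar g = q^{\mathcal{R}}(f * g)$, and by Lemma \ref{lemma:twisted-convolution-positive} we have $f * g \in C_c(G,A_+)$, so $\bar f * \bar g \in q^{\mathcal{R}}(C_c(G,A_+))$. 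This is exactly the condition $C \cdot C \subseteq C$ from the definition of a pre-ordered algebra, so combined with the normed-algebra structure it yields a pre-ordered normed algebra, as required.

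There is essentially no hard part here: the statement is an assembly of the quotient-cone construction from Section \ref{sub:Ordered-vector-spaces}, the positivity of twisted convolution from Lemma \ref{lemma:twisted-convolution-positive}, and the fact that $q^{\mathcal{R}}$ is an algebra homomorphism. The one subtlety worth a sentence is that a cone in the quotient is automatically well-defined as the \emph{set-image} $q^{\mathcal{R}}(C_c(G,A_+))$, rather than something requiring a choice of representative; the multiplicative closure then follows because the product of any two chosen positive representatives is again positive, independent of the representatives. I do not expect any obstacle beyond this bookkeeping.
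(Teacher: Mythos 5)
Your proposal is correct and follows essentially the same route as the paper's own proof: the normed-algebra structure is taken from the unordered construction, and the pre-ordered algebra structure follows from the quotient-cone construction together with Lemma \ref{lemma:twisted-convolution-positive} (positivity of twisted convolution), with the multiplicative closure of the quotient cone following because $q^{\mathcal{R}}$ is an algebra homomorphism. The paper states this in two sentences; you have merely made the same bookkeeping explicit.
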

\begin{proof}
As explained in Section \ref{sub:unordered-basic-definitions}, $C_{c}(G,A)/\ker\sigma^{\mathcal{R}}$
is a normed algebra with norm induced by $\sigma^{\mathcal{R}}$.
That it is a pre-ordered algebra follows from the definition of the
quotient cone and the fact that the twisted convolution of positive
elements of $C_{c}(G,A)$ is again positive by Lemma \ref{lemma:twisted-convolution-positive}.
\end{proof}
We can now describe $\crossedprod$ as a pre-ordered Banach algebra: 
\begin{defn}
\label{def:pre-ordered-crossed-prod}Let $(A,G,\alpha)$ be a pre-ordered
Banach algebra dynamical system, with $A$ having a closed cone, and
$\mathcal{R}$ a uniformly bounded class of continuous covariant representations
of $(A,G,\alpha)$ on Banach spaces. The completion  of the
pre-ordered normed algebra $(C_{c}(G,A)/\ker\sigma^{\mathcal{R}},q^{\mathcal{R}}(C_{c}(G,A_{+})))$ (in the sense
of Definition \ref{def:completion-or-ordered-normed-space}),
with norm $\|\cdot\|^{\mathcal{R}}$ induced by $\sigma^{\mathcal{R}}$,
will be denoted by $\crossedprod$, the pre-ordering being tacitly
understood, and will be called the\emph{ pre-ordered crossed product}
(\emph{associated with }$(A,G,\alpha)$\emph{ and $\mathcal{R}$}).
\end{defn}
We recall the following result from \cite{2011arXiv1104.5151D}, which
will be used twice in the proof of Theorem \ref{thm:ordered-crossed-prod-properties}:
\begin{prop}
\label{prop:norm-in-(AxG)R}\cite[Proposition 3.4]{2011arXiv1104.5151D}
Let $(A,G,\alpha)$ be a Banach algebra dynamical system, and let
$\mathcal{R}$ be a uniformly bounded class of continuous covariant
representations of $(A,G,\alpha)$ on Banach spaces. Then, for every
$d\in\crossedprod$, 
\[
\|d\|^{\mathcal{R}}=\sup_{(\pi,U)\in\mathcal{R}}\|(\pi\rtimes U)^{\mathcal{R}}(d)\|.
\]

\end{prop}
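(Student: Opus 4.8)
The plan is to denote the right-hand side by $N(d):=\sup_{(\pi,U)\in\mathcal{R}}\|(\pi\rtimes U)^{\mathcal{R}}(d)\|$ for $d\in\crossedprod$, and to prove the identity $N=\|\cdot\|^{\mathcal{R}}$ by a density-and-continuity argument. The space $q^{\mathcal{R}}(C_{c}(G,A))$ is dense in $\crossedprod$ by construction, so it suffices to check that both $N$ and $\|\cdot\|^{\mathcal{R}}$ are continuous functions on $\crossedprod$ and that they agree on this dense subspace.

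First I would verify agreement on the dense subspace directly from the definitions. For $f\in C_{c}(G,A)$, since $\sigma^{\mathcal{R}}$ is a seminorm whose kernel is precisely $\ker\sigma^{\mathcal{R}}$, the triangle inequality gives $\sigma^{\mathcal{R}}(f+g)=\sigma^{\mathcal{R}}(f)$ for every $g\in\ker\sigma^{\mathcal{R}}$, so the induced quotient norm satisfies $\|q^{\mathcal{R}}(f)\|^{\mathcal{R}}=\sigma^{\mathcal{R}}(f)$. On the other hand, by the very definition of the canonically induced maps one has $(\pi\rtimes U)^{\mathcal{R}}(q^{\mathcal{R}}(f))=\pi\rtimes U(f)$ for each $(\pi,U)\in\mathcal{R}$, whence
\[
N(q^{\mathcal{R}}(f))=\sup_{(\pi,U)\in\mathcal{R}}\|\pi\rtimes U(f)\|=\sigma^{\mathcal{R}}(f)=\|q^{\mathcal{R}}(f)\|^{\mathcal{R}}.
\]
Thus the two functions coincide on $q^{\mathcal{R}}(C_{c}(G,A))$.

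It remains to establish continuity of $N$ on all of $\crossedprod$, which is the only point requiring a little care, since a supremum of continuous functions is in general merely lower semicontinuous. The key observation is that the bound is \emph{uniform}: for each $(\pi,U)\in\mathcal{R}$ the inequality $\|\pi\rtimes U(f)\|\leq\sigma^{\mathcal{R}}(f)$ shows that the map induced on the quotient has norm at most $1$, and hence its canonical extension satisfies $\|(\pi\rtimes U)^{\mathcal{R}}\|\leq1$ on $\crossedprod$. Consequently each function $d\mapsto\|(\pi\rtimes U)^{\mathcal{R}}(d)\|$ is $1$-Lipschitz with respect to $\|\cdot\|^{\mathcal{R}}$; for any $d,d'\in\crossedprod$ and any $(\pi,U)$,
\[
\|(\pi\rtimes U)^{\mathcal{R}}(d)\|\leq\|(\pi\rtimes U)^{\mathcal{R}}(d')\|+\|d-d'\|^{\mathcal{R}}\leq N(d')+\|d-d'\|^{\mathcal{R}}.
\]
Taking the supremum over $\mathcal{R}$ and interchanging the roles of $d$ and $d'$ yields $|N(d)-N(d')|\leq\|d-d'\|^{\mathcal{R}}$, so $N$ is finite-valued and $1$-Lipschitz, in particular continuous. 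Since $N$ and $\|\cdot\|^{\mathcal{R}}$ are continuous and agree on the dense subspace $q^{\mathcal{R}}(C_{c}(G,A))$, they agree on all of $\crossedprod$, which is the claimed identity.

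I expect no serious obstacle here; the substantive step is recognizing that the uniform operator-norm bound $\|(\pi\rtimes U)^{\mathcal{R}}\|\leq1$ upgrades the supremum from lower semicontinuous to genuinely $1$-Lipschitz, so that continuity (and finiteness) is automatic and the density argument closes the proof.
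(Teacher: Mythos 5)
Your proof is correct, and since this paper merely recalls the proposition from \cite[Proposition 3.4]{2011arXiv1104.5151D} without reproving it, the relevant comparison is with that cited argument, which yours essentially reproduces: agreement of the two quantities on the dense subspace $q^{\mathcal{R}}(C_{c}(G,A))$, combined with the uniform contractivity $\|(\pi\rtimes U)^{\mathcal{R}}\|\leq1$ that makes $d\mapsto\sup_{(\pi,U)\in\mathcal{R}}\|(\pi\rtimes U)^{\mathcal{R}}(d)\|$ $1$-Lipschitz and so justifies passing to the limit. Your explicit observation that the uniform bound is what upgrades the supremum from merely lower semicontinuous to Lipschitz is exactly the substantive point of that proof.
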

The following theorem describes properties of the closed cone $\crossedprod_{+}$
in a pre-ordered crossed product $\crossedprod$. 
\begin{thm}
\label{thm:ordered-crossed-prod-properties}Let $(A,G,\alpha)$ be
a pre-ordered Banach algebra dynamical system with $A$ having a closed
cone. Let $\mathcal{R}$ be a uniformly bounded class of continuous
covariant representations of $(A,G,\alpha)$ on Banach spaces. Then:
\begin{enumerate}
\item The pre-ordered crossed product $\crossedprod$ is a pre-ordered Banach
algebra with a closed cone.
\item If $A_{+}$ is generating in $A$, then $\crossedprod_{+}$ is topologically
generating in $\crossedprod$. 
\item If $A_{+}$ is generating in $A$ and $(\cdot)^{+}:q^{\mathcal{R}}(C_{c}(G,A))\to q^{\mathcal{R}}(C_{c}(G,A_{+}))$
is a function such that $q^{\mathcal{R}}(f)\leq q^{\mathcal{R}}(f)^{+}$
for all $f\in C_{c}(G,A)$, and which maps $\|\cdot\|^{\mathcal{R}}$-Cauchy
sequences to $\|\cdot\|^{\mathcal{R}}$-Cauchy sequences, then the
cone $\crossedprod_{+}$ is generating in $\crossedprod$.
\item If $\mathcal{R}$ is a uniformly bounded class of positive continuous
covariant representations of $(A,G,\alpha)$ on pre-ordered Banach
spaces with closed cones such that, for every $(\pi,U)\in\mathcal{R}$,
the cone $B(X_{\pi})_{+}$ is proper, then the cone $\crossedprod_{+}$
is a proper cone.
\item If $\beta>0$ and $\mathcal{R}$ is a uniformly bounded class of positive
continuous covariant representations of $(A,G,\alpha)$ on pre-ordered
Banach spaces with closed cones such that, for every $(\pi,U)\in\mathcal{R}$,
$B(X_{\pi})$ is $\beta$-\textup{(}absolutely\textup{)} normal,
then $\crossedprod$ is $\beta$-\textup{(}absolutely\textup{)} normal.
\end{enumerate}
\end{thm}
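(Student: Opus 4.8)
The plan is to dispatch parts (1)--(3) by appeal to the general completion results of Section \ref{sub:Pre-ordered-normed-spaces-and-algebras}, and parts (4)--(5) by transporting the order-theoretic hypotheses on each $B(X_\pi)$ back to $\crossedprod$ through the positive maps $(\pi\rtimes U)^{\mathcal{R}}$ together with the norm formula of Proposition \ref{prop:norm-in-(AxG)R}.

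For part (1), Lemma \ref{lem:Cc(G,A)-quotient-is-ordered-normed-algebra} shows that $C_{c}(G,A)/\ker\sigma^{\mathcal{R}}$ is a pre-ordered normed algebra, and $\crossedprod$ is by definition its completion; Lemma \ref{lem:completion-of-normed-algebra-isbanach-algebra} then immediately yields that $\crossedprod$ is a pre-ordered Banach algebra with a closed cone. For part (2), I would first apply Corollary \ref{thm:C(Omega,X+)-generating-if-X+-is-generating} with $\Omega=G$ and $X=A$ (permissible since $A$ has a closed, and now generating, cone) to see that $C_{c}(G,A_{+})$ is generating in $C_{c}(G,A)$. Since $q^{\mathcal{R}}$ is positive and maps onto the dense subalgebra, the quotient cone $q^{\mathcal{R}}(C_{c}(G,A_{+}))$ is generating in $C_{c}(G,A)/\ker\sigma^{\mathcal{R}}$ (see the observation on quotient cones in Section \ref{sub:Ordered-vector-spaces}); Lemma \ref{lem:generating-becomes-weakly-generating-in-completions} then promotes this to topological generation of $\crossedprod_{+}$ in the completion $\crossedprod$. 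Part (3) is a direct application of Lemma \ref{lem:generating-with-continuous-pos-part-function-becomes-generating-in-completions} to $V=C_{c}(G,A)/\ker\sigma^{\mathcal{R}}$: the hypothesized function $(\cdot)^{+}$ is exactly the data required, and since it maps Cauchy sequences to Cauchy sequences, the lemma delivers that $\crossedprod_{+}=\overline{V_{+}}$ is generating in $\crossedprod$.

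The common engine for parts (4) and (5) is the observation that, for each positive $(\pi,U)\in\mathcal{R}$, the induced map $(\pi\rtimes U)^{\mathcal{R}}:\crossedprod\to B(X_{\pi})$ is positive. I would establish this in two steps: Lemma \ref{lem:integrated-froms-positive} shows that the integrated form $\pi\rtimes U$ is positive on $C_{c}(G,A)$, so $(\pi\rtimes U)^{\mathcal{R}}$ carries the quotient cone into $B(X_{\pi})_{+}$; and since $X_{\pi}$ has a closed cone, the operator cone $B(X_{\pi})_{+}$ is closed, so Lemma \ref{lem:bounded-positive-operator-on-ordered-noremed-space-implies-extension-is-positive} guarantees that the bounded extension to the completion $\crossedprod$ remains positive, hence sends $\crossedprod_{+}$ into $B(X_{\pi})_{+}$. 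For part (4), if $d\in\crossedprod_{+}\cap(-\crossedprod_{+})$ then $(\pi\rtimes U)^{\mathcal{R}}(d)\in B(X_{\pi})_{+}\cap(-B(X_{\pi})_{+})=\{0\}$ by properness, for every $(\pi,U)\in\mathcal{R}$; Proposition \ref{prop:norm-in-(AxG)R} then gives $\|d\|^{\mathcal{R}}=\sup_{(\pi,U)}\|(\pi\rtimes U)^{\mathcal{R}}(d)\|=0$, so $d=0$ and $\crossedprod_{+}$ is proper. For part (5), suppose $0\leq d\leq e$ (resp.\ $\pm d\leq e$); positivity of $(\pi\rtimes U)^{\mathcal{R}}$ yields $0\leq(\pi\rtimes U)^{\mathcal{R}}(d)\leq(\pi\rtimes U)^{\mathcal{R}}(e)$ (resp.\ $\pm(\pi\rtimes U)^{\mathcal{R}}(d)\leq(\pi\rtimes U)^{\mathcal{R}}(e)$) in $B(X_{\pi})$, whence $\beta$-(absolute) normality of $B(X_{\pi})$ gives $\|(\pi\rtimes U)^{\mathcal{R}}(d)\|\leq\beta\|(\pi\rtimes U)^{\mathcal{R}}(e)\|\leq\beta\|e\|^{\mathcal{R}}$; taking the supremum over $\mathcal{R}$ and invoking Proposition \ref{prop:norm-in-(AxG)R} once more gives $\|d\|^{\mathcal{R}}\leq\beta\|e\|^{\mathcal{R}}$.

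I expect the only genuinely delicate point to be the passage of positivity from the dense subalgebra $C_{c}(G,A)/\ker\sigma^{\mathcal{R}}$ to the completion $\crossedprod$ in parts (4)--(5): one must know both that $B(X_{\pi})_{+}$ is closed (which rests on $X_{\pi}$ having a closed cone) and that $(\pi\rtimes U)^{\mathcal{R}}$ is bounded, so that Lemma \ref{lem:bounded-positive-operator-on-ordered-noremed-space-implies-extension-is-positive} applies. Once this positivity on all of $\crossedprod$ is in hand, everything else is a formal transcription of the order hypotheses on the operator algebras $B(X_{\pi})$ through the norm formula of Proposition \ref{prop:norm-in-(AxG)R}.
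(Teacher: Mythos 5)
Your proposal is correct and follows essentially the same route as the paper: parts (1)--(3) via Lemmas \ref{lem:Cc(G,A)-quotient-is-ordered-normed-algebra}, \ref{lem:completion-of-normed-algebra-isbanach-algebra}, \ref{lem:generating-becomes-weakly-generating-in-completions}, \ref{lem:generating-with-continuous-pos-part-function-becomes-generating-in-completions} and Corollary \ref{thm:C(Omega,X+)-generating-if-X+-is-generating}, and parts (4)--(5) by establishing positivity of $(\pi\rtimes U)^{\mathcal{R}}$ on all of $\crossedprod$ through Lemmas \ref{lem:integrated-froms-positive} and \ref{lem:bounded-positive-operator-on-ordered-noremed-space-implies-extension-is-positive}, then transporting properness and normality back via the norm formula of Proposition \ref{prop:norm-in-(AxG)R}. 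The point you flag as delicate (closedness of $B(X_{\pi})_{+}$ plus boundedness so that the extension lemma applies) is exactly the step the paper makes explicit as well.
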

\begin{proof}
As to (1): That $\crossedprod$ is a pre-ordered Banach algebra with
closed cone is immediate from Lemmas \ref{lem:Cc(G,A)-quotient-is-ordered-normed-algebra}
and \ref{lem:completion-of-normed-algebra-isbanach-algebra}.

We prove (2). Let $A_{+}$ be generating in $A$. By Corollary \ref{thm:C(Omega,X+)-generating-if-X+-is-generating},
the cone $C_{c}(G,A_{+})$ is generating in $C_{c}(G,A)$. Hence
the quotient cone is generating in $C_{c}(G,A)/\ker\sigma^{\mathcal{R}}$,
and by  Lemma \ref{lem:generating-becomes-weakly-generating-in-completions},
$\crossedprod_{+}$ is topologically generating in $\crossedprod$. 

The statement in (3) follows from Lemma \ref{lem:generating-with-continuous-pos-part-function-becomes-generating-in-completions}.

We prove (4). Let $d\in\crossedprod$ be such that $0\leq d\leq0$
in $\crossedprod$. Let $(\pi,U)\in\mathcal{R}$ be arbitrary. By
Lemma \ref{lem:integrated-froms-positive}, $\pi\rtimes U:C_{c}(G,A)\to B(X_{\pi})$
is a positive algebra representation. Therefore the induced map $(\pi\rtimes U)^{\mathcal{R}}:C_{c}(G,A)/\ker\sigma^{\mathcal{R}}\to B(X_{\pi})$
is a positive bounded algebra representation. Since the cone of $X_{\pi}$
is closed, so is the cone of $B(X_{\pi})$, and therefore, by Lemma
\ref{lem:bounded-positive-operator-on-ordered-noremed-space-implies-extension-is-positive},
$(\pi\rtimes U)^{\mathcal{R}}:\crossedprod\to B(X_{\pi})$ is a positive
algebra representation. Hence $0\leq d\leq0$ implies 
\[
0\leq(\pi\rtimes U)^{\mathcal{R}}(d)\leq0,
\]
and since $B(X_{\pi})_{+}$ is a proper cone, we obtain $(\pi\rtimes U)^{\mathcal{R}}(d)=0$.
Therefore, by Proposition \ref{prop:norm-in-(AxG)R}, $\|d\|^{\mathcal{R}}=\sup_{(\pi,U)\in\mathcal{R}}\|(\pi\rtimes U)^{\mathcal{R}}(d)\|=0$,
and hence $d=0$. We conclude that $\crossedprod_{+}$ is a proper
cone.

We prove (5). Let $\beta>0$ be such that, for every $(\pi,U)\in\mathcal{R}$,
$B(X_{\pi})$ is $\beta$-absolutely normal. For any $(\pi,U)\in\mathcal{R}$,
as previously, $(\pi\rtimes U)^{\mathcal{R}}:\crossedprod\to B(X_{\pi})$
is a positive algebra representation. Hence, if $\pm d_{1}\leq d_{2}$
for $d_{1},d_{2}\in\crossedprod$, we have 
\[
\pm(\pi\rtimes U)^{\mathcal{R}}(d_{1})\leq(\pi\rtimes U)^{\mathcal{R}}(d_{2}).
\]
Since $B(X_{\pi})$ is $\beta$-absolutely normal, we obtain $\|(\pi\rtimes U)^{\mathcal{R}}(d_{1})\|\leq\beta\|(\pi\rtimes U)^{\mathcal{R}}(d_{2})\|$.
By Proposition \ref{prop:norm-in-(AxG)R}, taking the supremum over
$\mathcal{R}$ on both sides yields $\|d_{1}\|^{\mathcal{R}}\leq\beta\|d_{2}\|^{\mathcal{R}}$.

That $\crossedprod$ is $\beta$-normal for some $\beta>0$ under
the assumption that, for every $(\pi,U)\in\mathcal{R}$, $B(X_{\pi})$
is $\beta$-normal follows similarly. 
\end{proof}
Under the hypotheses of Theorem \ref{thm:ordered-crossed-prod-properties},
we see that it is relatively easy to have a topologically generating
cone of $\crossedprod$: It is sufficient that $A_{+}$ is generating
in $A$. The condition in (3) under which the cone of $\crossedprod$
is generating is less easily verified if $\sigma^{\mathcal{R}}$ is
not a norm, but we will nevertheless see an example (where $\sigma^{\mathcal{R}}$
is a norm) in Section \ref{sec:Pre-ordered-generalized-Beurling}
where we conclude that the cone of $\crossedprod$ is generating through
a different method than provided by (3) in the theorem above. Furthermore,
according to part (4) and Theorem \ref{thm-yamamuro-results}, if
every continuous covariant representation from $\mathcal{R}$ is positive
and acts on a pre-ordered Banach space with a closed proper generating
cone, then the cone of $\crossedprod$ is proper. As to (5), an appeal
to Remark \ref{remark:onyamamuroresults} shows that $\crossedprod$
is $1$-absolutely normal if every continuous covariant representation
from $\mathcal{R}$ is positive and acts on a Banach lattice. We collect
the features of the latter case in the following result:
\begin{cor}
\label{cor:ordered-crossed-prod-properties-Banach-lattice-version}Let
$(A,G,\alpha)$ be a pre-ordered Banach algebra dynamical system with
$A$ having a closed cone. Let $\mathcal{R}$ be a uniformly bounded
class of positive continuous covariant representations on Banach lattices.
Then $\crossedprod_{+}$ is a closed proper cone and $\crossedprod$
is a $1$-absolutely normal ordered Banach algebra, i.e., for $d_{1},d_{2}\in\crossedprod$,
if $\pm d_{1}\leq d_{2}$, then $\|d_{1}\|^{\mathcal{R}}\leq\|d_{2}\|^{\mathcal{R}}$.
If $A_{+}$ is generating in $A$, then $\crossedprod_{+}$ is topologically
generating in $\crossedprod$.
\end{cor}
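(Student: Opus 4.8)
The plan is to obtain every assertion as a direct consequence of Theorem \ref{thm:ordered-crossed-prod-properties}, feeding into it the two standard facts about Banach lattices collected in Remark \ref{remark:onyamamuroresults}. The single input I would isolate first is that, for each $(\pi,U)\in\mathcal{R}$, the operator algebra $B(X_\pi)$ is $1$-absolutely normal: since $X_\pi$ is a Banach lattice it is simultaneously approximately $1$-absolutely conormal and $1$-absolutely normal, so Theorem \ref{thm-yamamuro-results}(3) with $\alpha=\beta=1$ (as already recorded in Remark \ref{remark:onyamamuroresults}) gives $1$-absolute normality of $B(X_\pi)$. This one observation drives both the properness and the normality of the cone.

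For the closedness and properness of $\crossedprod_{+}$, I would invoke Theorem \ref{thm:ordered-crossed-prod-properties}(1) for closedness, and part (4) for properness. To apply part (4) I must verify that $B(X_\pi)_{+}$ is a proper cone for every $(\pi,U)\in\mathcal{R}$, which is precisely where the Banach lattice hypothesis enters. Note first that $B(X_\pi)$ is a Banach space whose cone is closed, since the cone of $X_\pi$ is closed; hence the normality implications recorded in the earlier remark on normality and conormality apply to it. The $1$-absolute normality of $B(X_\pi)$ just noted implies $1$-normality, which in turn forces $B(X_\pi)_{+}$ to be proper. With this hypothesis met, part (4) yields that $\crossedprod_{+}$ is proper, so $\crossedprod$ is in fact an ordered Banach algebra.

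For the normality statement, I would apply Theorem \ref{thm:ordered-crossed-prod-properties}(5) with $\beta=1$, using that each $B(X_\pi)$ is $1$-absolutely normal. This gives exactly that $\crossedprod$ is $1$-absolutely normal, i.e.\ that $\pm d_{1}\leq d_{2}$ implies $\|d_{1}\|^{\mathcal{R}}\leq\|d_{2}\|^{\mathcal{R}}$. Finally, the topologically generating assertion under the extra hypothesis that $A_{+}$ is generating in $A$ is simply Theorem \ref{thm:ordered-crossed-prod-properties}(2) quoted verbatim.

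The proof is almost entirely a matter of checking that the hypotheses of the relevant parts of Theorem \ref{thm:ordered-crossed-prod-properties} are satisfied, so there is no substantial obstacle. The only point requiring a moment's care is the verification that $B(X_\pi)_{+}$ is proper for part (4): rather than arguing this directly, I would route it through the normality implications (Banach lattice $\Rightarrow$ $1$-absolute normality of $B(X_\pi)$ $\Rightarrow$ $1$-normality $\Rightarrow$ proper cone), which also makes the reuse of the same $1$-absolute normality fact in part (5) entirely transparent.
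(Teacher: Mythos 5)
Your proof is correct and takes essentially the same approach as the paper, which likewise assembles the corollary from parts (1), (2), (4) and (5) of Theorem \ref{thm:ordered-crossed-prod-properties} combined with Theorem \ref{thm-yamamuro-results} and Remark \ref{remark:onyamamuroresults}. The only cosmetic difference is that the paper verifies properness of $B(X_{\pi})_{+}$ via Theorem \ref{thm-yamamuro-results}(1) (generating cone in the domain, proper cone in the codomain), whereas you deduce it from the $1$-absolute normality of $B(X_{\pi})$ already established; both verifications are immediate.
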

The following example shows that even with $A$ a Banach lattice algebra
and the positive representations from $\mathcal{R}$ acting on Banach
lattices, $\ker\sigma^{\mathcal{R}}$ need not be an order ideal in
the vector lattice $C_{c}(G,A)$ in general. 
\begin{example}
Let $\mathbb{Z}_{2}:=\mathbb{Z}/2\mathbb{Z}$. We consider the pre-ordered
Banach algebra dynamical system $(\mathbb{R},\mathbb{Z}_{2},\textup{triv})$
and $\mathcal{R}=\{(\textup{id},\mbox{triv})\}$ with $(\textup{id},\mbox{triv})$
the trivial positive non-degenerate continuous covariant representation
of $(\mathbb{R},\mathbb{Z}_{2},\textup{triv})$ on $\mathbb{R}$.
Then, for $f\in C_{c}(\mathbb{Z}_{2})$, $\textup{id}\rtimes\mbox{triv}(f)=f(0)+f(1)$,
hence $f\in\ker\sigma^{\mathcal{R}}$ if and only if $f(0)=-f(1)$.
In particular, since $f\in\ker\sigma^{\mathcal{R}}$ does not imply
$|f|\in\ker\sigma^{\mathcal{R}}$, $\ker\sigma^{\mathcal{R}}$ is
not an order ideal in the vector lattice $C_{c}(\mathbb{Z}_{2})$. 
\end{example}

\subsection{Correspondence between positive representations of $(A,G,\alpha)$
and $\crossedprod$\label{sub:Correspondence-between-positive-representations}}

In this section we give an adaptation of the General Correspondence
Theorem (Theorem \ref{thm:General-Correspondence-Theorem}) to the
ordered context. As in the unordered context, Theorem \ref{thm:extension-theorem}
will be a crucial ingredient, which here will rely on the existence
of a positive bounded approximate left identity of the pre-ordered
crossed product $\crossedprod$. The following result shows that this
is inherited from $A$.
\begin{prop}
\label{prop:ordered-crossed-prod-has-positive-bounded-approx-id}Let
$(A,G,\alpha)$ be a pre-ordered Banach algebra dynamical system with
$A$ having a closed cone, and let $\mathcal{R}$ be a uniformly bounded
class of continuous covariant representations of $(A,G,\alpha)$ on
Banach spaces. Let $A$ have a positive bounded approximate left \textup{(}right\textup{)}
identity $(u_{i})$. Then the net 
\[
\left(q^{\mathcal{R}}(z_{V}\otimes u_{i})\right),
\]
as described in Theorem \ref{thm:bounded-approx-id-incrossed-prod},
is a positive bounded approximate left \textup{(}right\textup{)} identity
of $\crossedprod$.\end{prop}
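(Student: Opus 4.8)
The plan is to observe that almost all of the work is already done, and that only a short positivity check remains. As a normed algebra, the pre-ordered crossed product $\crossedprod$ of Definition \ref{def:pre-ordered-crossed-prod} is literally the completion of $C_c(G,A)/\ker\sigma^{\mathcal{R}}$ with the norm induced by $\sigma^{\mathcal{R}}$; the order structure affects only the cone, not the norm or the multiplication. Consequently Theorem \ref{thm:bounded-approx-id-incrossed-prod}, applied verbatim to the underlying (unordered) Banach algebra dynamical system, already establishes that the net $\left(q^{\mathcal{R}}(z_V\otimes u_i)\right)$ is a bounded approximate left (right) identity of $\crossedprod$. The only genuinely new content of the proposition is therefore that each element of this net is positive.

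To verify positivity I would argue as follows. By the construction in Theorem \ref{thm:bounded-approx-id-incrossed-prod} each $z_V\in C_c(G)$ is positive, i.e.\ pointwise nonnegative, and by hypothesis $u_i\in A_+$ since $(u_i)$ is a \emph{positive} approximate identity. Because $A_+$ is a cone, it is closed under multiplication by nonnegative scalars, so $z_V(s)u_i\in A_+$ for every $s\in G$; hence the $A$-valued function $s\mapsto z_V(s)u_i$ takes all its values in $A_+$, which is precisely the statement that $z_V\otimes u_i\in C_c(G,A_+)$. Applying $q^{\mathcal{R}}$ then places $q^{\mathcal{R}}(z_V\otimes u_i)$ in the quotient cone $q^{\mathcal{R}}(C_c(G,A_+))$, which by Definition \ref{def:pre-ordered-crossed-prod} (together with Definition \ref{def:completion-or-ordered-normed-space}) is contained in the cone $\crossedprod_+$, the latter being the closure of the quotient cone in the completion. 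Thus every net element lies in $\crossedprod_+$ and is positive.

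I do not anticipate any real obstacle here. The decomposition into the unordered bounded-approximate-identity statement plus a one-line positivity check is forced by the way the pre-ordered crossed product is defined, and the positivity check uses nothing beyond the facts that $A_+$ is a cone and that the cone of $\crossedprod$ is by construction the closure of the image of $C_c(G,A_+)$ under $q^{\mathcal{R}}$. The right-handed case is identical, with the word ``left'' replaced by ``right'' throughout.
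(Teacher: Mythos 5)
Your proof is correct and follows essentially the same route as the paper's: the paper likewise observes that $z_{V}\otimes u_{i}\in C_{c}(G,A_{+})$ and that the positive quotient map $q^{\mathcal{R}}$ therefore sends each net element into $\crossedprod_{+}$, while the bounded approximate left (right) identity property is quoted directly from Theorem \ref{thm:bounded-approx-id-incrossed-prod}. Your version merely spells out the one-line positivity check (nonnegativity of $z_{V}(s)$ plus closure of $A_{+}$ under nonnegative scalars) in slightly more detail.
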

\begin{proof}
Since the quotient map $q^{\mathcal{R}}:C_{c}(G,A)\to\crossedprod$
is positive and $z_{V}\otimes u_{i}\in C_{c}(G,A_{+})$ for all $i$
and $V\in\mathcal{Z}$, we have $q^{\mathcal{R}}(z_{V}\otimes u_{i})\in\crossedprod_{+}$.
That $\left(q^{\mathcal{R}}(z_{V}\otimes u_{i})\right)$ is a bounded
left (right) identity is the statement of Theorem \ref{thm:bounded-approx-id-incrossed-prod}.
\end{proof}
The following will be used in the proof of Theorem \ref{thm:Ordered-general-correspondence}
and in Section \ref{sec:Uniqueness-of-the-crossed-prod}.
\begin{lem}
\label{lem:iAiG-is-positive}Let $(A,G,\alpha)$ be a pre-ordered
Banach algebra dynamical system, with $A$ having a closed cone. With
$C_{c}(G,A)$ pre-ordered by the cone $C_{c}(G,A_{+})$, the representations
$i_{A}:A\to\textup{End}(C_{c}(G,A))$ and $i_{G}:G\to\textup{End}(C_{c}(G,A))$
as in defined in Section \ref{sub:classical-Correspondence} are positive.

If $\mathcal{R}$ is a uniformly bounded class of non-degenerate continuous
covariant representations of $(A,G,\alpha)$ on Banach spaces, and
$A$ has a bounded approximate left identity, then the pair $(i_{A}^{\mathcal{R}},i_{G}^{\mathcal{R}})$
as defined in Section \ref{sub:classical-Correspondence} is a positive
non-degenerate $\mathcal{R}$-continuous covariant representation
of $(A,G,\alpha)$ on $\crossedprod$ such that $i_{A}^{\mathcal{R}}(A),i_{G}^{\mathcal{R}}(G)\subseteq\leftcent(\crossedprod)$.\end{lem}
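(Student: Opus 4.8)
The plan is to prove positivity first at the level of $C_{c}(G,A)$, and then transport it to $\crossedprod$ through the positivity of the quotient map $q^{\mathcal{R}}$ together with a density/continuity argument, while importing the purely algebraic (unordered) content from the recapitulation. For the first statement I would simply read off positivity from the defining formulas. For $i_{A}$: if $a\in A_{+}$ and $f\in C_{c}(G,A_{+})$, then $(i_{A}(a)f)(s)=af(s)\in A_{+}$ for every $s\in G$, since $A$ is a pre-ordered algebra and hence $A_{+}\cdot A_{+}\subseteq A_{+}$; thus $i_{A}(a)C_{c}(G,A_{+})\subseteq C_{c}(G,A_{+})$, i.e.\ $i_{A}$ is positive. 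For $i_{G}$: because $\alpha_{r}\in\mbox{Aut}_{+}(A)$ we have $\alpha_{r}^{\pm1}(A_{+})\subseteq A_{+}$, so $(i_{G}(r)f)(s)=\alpha_{r}(f(r^{-1}s))\in A_{+}$ whenever $f\in C_{c}(G,A_{+})$, giving $i_{G}(r)C_{c}(G,A_{+})\subseteq C_{c}(G,A_{+})$. Applying the same to $r^{-1}$ and using $i_{G}(r)^{-1}=i_{G}(r^{-1})$ shows $i_{G}(r)\in\mbox{Inv}_{+}(C_{c}(G,A))$, so $i_{G}$ is a positive representation of $G$.

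For the second statement, all the unordered assertions---that $(i_{A}^{\mathcal{R}},i_{G}^{\mathcal{R}})$ is a non-degenerate $\mathcal{R}$-continuous covariant representation of $(A,G,\alpha)$ on $\crossedprod$, and that $i_{A}^{\mathcal{R}}(A),i_{G}^{\mathcal{R}}(G)\subseteq\leftcent(\crossedprod)$---are already recorded in the recapitulation preceding Theorem \ref{thm:General-Correspondence-Theorem} (citing \cite[Proposition 6.4, Theorem 7.2]{2011arXiv1104.5151D}), so the only genuinely new point is positivity of $i_{A}^{\mathcal{R}}$ and $i_{G}^{\mathcal{R}}$. Here I would use the intertwining relation $i_{A}^{\mathcal{R}}(a)\circ q^{\mathcal{R}}=q^{\mathcal{R}}\circ i_{A}(a)$ coming from the definition of the induced map. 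Since $q^{\mathcal{R}}$ is positive, the first statement yields that, for $a\in A_{+}$, the operator induced by $i_{A}(a)$ on $C_{c}(G,A)/\ker\sigma^{\mathcal{R}}$ sends the quotient cone $q^{\mathcal{R}}(C_{c}(G,A_{+}))$ into itself. Viewing this induced operator as a positive bounded map from the pre-ordered normed space $C_{c}(G,A)/\ker\sigma^{\mathcal{R}}$ into the pre-ordered Banach space $\crossedprod$, which has a closed cone by Theorem \ref{thm:ordered-crossed-prod-properties}(1), Lemma \ref{lem:bounded-positive-operator-on-ordered-noremed-space-implies-extension-is-positive} shows that its bounded extension---which is exactly $i_{A}^{\mathcal{R}}(a)$---is positive. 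Hence $i_{A}^{\mathcal{R}}$ is a positive representation. The identical argument applied to $i_{G}(r)$ gives positivity of $i_{G}^{\mathcal{R}}(r)$; running it also for $r^{-1}$ and invoking $i_{G}^{\mathcal{R}}(r)^{-1}=i_{G}^{\mathcal{R}}(r^{-1})$ shows $i_{G}^{\mathcal{R}}(r)\in\mbox{Inv}_{+}(\crossedprod)$, so $i_{G}^{\mathcal{R}}$ is positive. Consequently $(i_{A}^{\mathcal{R}},i_{G}^{\mathcal{R}})$ is a positive covariant representation.

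The argument is essentially routine, and I do not expect a serious obstacle. The only points requiring care are the passage from the dense quotient algebra to its completion---where one must use that $\crossedprod_{+}$ is by construction the closure of $q^{\mathcal{R}}(C_{c}(G,A_{+}))$ (Definitions \ref{def:completion-or-ordered-normed-space} and \ref{def:pre-ordered-crossed-prod}) so that Lemma \ref{lem:bounded-positive-operator-on-ordered-noremed-space-implies-extension-is-positive} is applicable---and, for the group component, the observation that landing in $\mbox{Inv}_{+}(\crossedprod)$ rather than merely in the positive cone of $B(\crossedprod)$ genuinely requires positivity of both $i_{G}^{\mathcal{R}}(r)$ and its inverse.
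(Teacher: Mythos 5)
Your proposal is correct and follows essentially the same route as the paper: positivity is first verified directly on $C_{c}(G,A)$, then transported to $\crossedprod$ via the $\sigma^{\mathcal{R}}$-boundedness of $i_{A}(a)$ and $i_{G}(r)$ together with Lemma \ref{lem:bounded-positive-operator-on-ordered-noremed-space-implies-extension-is-positive}, while the unordered assertions (non-degeneracy, $\mathcal{R}$-continuity, covariance, and landing in $\leftcent(\crossedprod)$) are imported from the recapitulated results of \cite{2011arXiv1104.5151D}. Your explicit treatment of the quotient-cone intertwining and of the $\mbox{Inv}_{+}$ requirement for $i_{G}^{\mathcal{R}}$ merely spells out details the paper leaves implicit.
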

\begin{proof}
That the maps $i_{A}:A\to\mbox{End}(C_{c}(G,A))$ and $i_{G}:G\to\mbox{End}(C_{c}(G,A))$
are positive is clear. By \cite[Lemma 6.3]{2011arXiv1104.5151D} and
Lemma \ref{lem:bounded-positive-operator-on-ordered-noremed-space-implies-extension-is-positive}
the operators $i_{A}^{\mathcal{R}}(a)$ and $i_{G}^{\mathcal{R}}(r)$
($a\in A_{+}$, $r\in G$) on $\crossedprod$ are positive. The remaining
statement is contained in \cite[Theorem 7.2]{2011arXiv1104.5151D}.
\end{proof}
We finally establish the following adaptation of the General Correspondence
Theorem to the ordered context. Note that the class $\mathcal{R}$
is not required to consist of positive continuous covariant representations.
Conditions in that vein affect the properties of the cone in $\crossedprod$
(cf. Theorem \ref{thm:ordered-crossed-prod-properties}, Corollary
\ref{cor:ordered-crossed-prod-properties-Banach-lattice-version}),
but are not necessary for the correspondence.
\begin{thm}
\label{thm:Ordered-general-correspondence}Let $(A,G,\alpha)$ be
a pre-ordered Banach algebra dynamical system, with $A$ having a
closed cone and a positive bounded approximate left identity. Let
$\mathcal{R}$ be a uniformly bounded class of non-degenerate continuous
covariant representations of $(A,G,\alpha)$ on Banach spaces. Then
the map $(\pi,U)\mapsto(\pi\rtimes U)^{\mathcal{R}}$ is a bijection
between the positive non-degenerate $\mathcal{R}$-continuous covariant
representations of $(A,G,\alpha)$ on pre-ordered Banach spaces with
closed cones and the positive non-degenerate bounded representations
of $\crossedprod$ on such spaces.

More precisely:
\begin{enumerate}
\item If $(\pi,U)$ is a positive non-degenerate $\mathcal{R}$-continuous
covariant representation of $(A,G,\alpha)$ on a pre-ordered Banach
space $X_{\pi}$ with a closed cone, then $(\pi\rtimes U)^{\mathcal{R}}$
is a positive non-degenerate bounded representation of $\crossedprod$
on $X_{\pi}$, and 
\[
(\overline{(\pi\rtimes U)^{\mathcal{R}}}\circ i_{A}^{\mathcal{R}},\overline{(\pi\rtimes U)^{\mathcal{R}}}\circ i_{G}^{\mathcal{R}})=(\pi,U),
\]
where $\overline{(\pi\rtimes U)^{\mathcal{R}}}$ is the positive non-degenerate
bounded representation of \textup{$\leftcent(\crossedprod)$ }as in
Theorem \ref{thm:extension-theorem}\textup{.} 
\item If $T$ is a positive non-degenerate bounded representation of $\crossedprod$
on a pre-ordered Banach space $X_{T}$ with a closed cone, then $(\overline{T}\circ i_{A}^{\mathcal{R}},\overline{T}\circ i_{G}^{\mathcal{R}})$
is a positive non-degenerate $\mathcal{R}$-continuous covariant representation
of $(A,G,\alpha)$ on $X_{T}$, and
\[
(\overline{T}\circ i_{A}^{\mathcal{R}}\rtimes\overline{T}\circ i_{G}^{\mathcal{R}})^{\mathcal{R}}=T,
\]
where $\overline{T}$ is the positive non-degenerate bounded representation
of \textup{$\leftcent(\crossedprod)$ }as in Theorem \ref{thm:extension-theorem}\textup{.} 
\end{enumerate}
\end{thm}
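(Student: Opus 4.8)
The plan is to reduce everything to the unordered General Correspondence Theorem (Theorem \ref{thm:General-Correspondence-Theorem}) and then verify only that positivity is preserved in both directions. Theorem \ref{thm:General-Correspondence-Theorem} already supplies the bijection $(\pi,U)\mapsto(\pi\rtimes U)^{\mathcal{R}}$ between non-degenerate $\mathcal{R}$-continuous covariant representations and non-degenerate bounded representations of $\crossedprod$, together with the explicit inverse $T\mapsto(\overline{T}\circ i_A^{\mathcal{R}},\overline{T}\circ i_G^{\mathcal{R}})$ and the round-trip identities recorded in its parts (1) and (2). Consequently the only new content is that this bijection restricts to one between the \emph{positive} objects on either side, and for this it suffices to establish the two positivity assertions separately. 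Throughout I would use that $\crossedprod$ has a positive bounded approximate left identity, furnished by Proposition \ref{prop:ordered-crossed-prod-has-positive-bounded-approx-id}, so that the ordered clause of the extension Theorem \ref{thm:extension-theorem} is applicable.

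For part (1), suppose $(\pi,U)$ is a positive non-degenerate $\mathcal{R}$-continuous covariant representation on a pre-ordered Banach space $X_\pi$ with closed cone. By Lemma \ref{lem:integrated-froms-positive} the integrated form $\pi\rtimes U\colon C_c(G,A)\to B(X_\pi)$ is positive, hence so is the induced map on the quotient $C_c(G,A)/\ker\sigma^{\mathcal{R}}$, which carries the quotient cone $q^{\mathcal{R}}(C_c(G,A_+))$. Since $X_\pi$ has a closed cone, $B(X_\pi)_+$ is closed, so Lemma \ref{lem:bounded-positive-operator-on-ordered-noremed-space-implies-extension-is-positive} extends positivity to the completion, giving that $(\pi\rtimes U)^{\mathcal{R}}\colon\crossedprod\to B(X_\pi)$ is positive. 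The identity $(\overline{(\pi\rtimes U)^{\mathcal{R}}}\circ i_A^{\mathcal{R}},\overline{(\pi\rtimes U)^{\mathcal{R}}}\circ i_G^{\mathcal{R}})=(\pi,U)$ is exactly that of Theorem \ref{thm:General-Correspondence-Theorem}(1), while positivity of $\overline{(\pi\rtimes U)^{\mathcal{R}}}$ on $\leftcent(\crossedprod)$ follows from the ordered clause of Theorem \ref{thm:extension-theorem}, using the positive bounded approximate left identity.

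For part (2), suppose $T$ is a positive non-degenerate bounded representation of $\crossedprod$ on a pre-ordered Banach space $X_T$ with closed cone. The ordered clause of Theorem \ref{thm:extension-theorem} (again via the positive bounded approximate left identity of $\crossedprod$) gives that the extension $\overline{T}\colon\leftcent(\crossedprod)\to B(X_T)$ is positive. By Lemma \ref{lem:iAiG-is-positive} the maps $i_A^{\mathcal{R}}$ and $i_G^{\mathcal{R}}$ carry $A_+$ and $G$ into the positive cone of $\leftcent(\crossedprod)$, so the compositions $\overline{T}\circ i_A^{\mathcal{R}}$ and $\overline{T}\circ i_G^{\mathcal{R}}$ are positive. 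The one point needing extra care is that $\overline{T}\circ i_G^{\mathcal{R}}$ must be a \emph{positive} group representation, i.e.\ take values in $\mbox{Inv}_+(X_T)$: here I would note that it is already a homomorphism into $\mbox{Inv}(X_T)$ by the unordered theorem, and that both $i_G^{\mathcal{R}}(r)$ and $i_G^{\mathcal{R}}(r^{-1})=i_G^{\mathcal{R}}(r)^{-1}$ are positive in $\leftcent(\crossedprod)$, whence $\overline{T}(i_G^{\mathcal{R}}(r))$ and its inverse are both positive. That the pair is a non-degenerate $\mathcal{R}$-continuous covariant representation, and the identity $(\overline{T}\circ i_A^{\mathcal{R}}\rtimes\overline{T}\circ i_G^{\mathcal{R}})^{\mathcal{R}}=T$, are supplied verbatim by Theorem \ref{thm:General-Correspondence-Theorem}(2).

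Combining the two positivity statements with the unordered bijection then yields the theorem: the map sends positive objects to positive objects by part (1), its inverse does so by part (2), and the mutual-inverse identities are inherited from Theorem \ref{thm:General-Correspondence-Theorem}. I do not anticipate a genuine obstacle, since the crossed product is constructed so that its cone is the closed hull of $q^{\mathcal{R}}(C_c(G,A_+))$, and the real work has already been isolated into the ordered versions of the extension theorem and the approximate-identity result. The most error-prone point is the bipositivity of the recovered group representation, which is why I would treat the $i_G^{\mathcal{R}}$ composition explicitly rather than lumping it together with $i_A^{\mathcal{R}}$.
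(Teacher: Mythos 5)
Your proof is correct and takes essentially the same route as the paper's: both reduce everything to the unordered General Correspondence Theorem and verify positivity in each direction, using Lemmas \ref{lem:integrated-froms-positive} and \ref{lem:bounded-positive-operator-on-ordered-noremed-space-implies-extension-is-positive} for part (1), and Proposition \ref{prop:ordered-crossed-prod-has-positive-bounded-approx-id}, the ordered clause of Theorem \ref{thm:extension-theorem}, and Lemma \ref{lem:iAiG-is-positive} for part (2). Your explicit treatment of the recovered group representation taking values in $\mbox{Inv}_{+}(X_{T})$ is a point the paper absorbs into Lemma \ref{lem:iAiG-is-positive} (positivity of $i_{G}^{\mathcal{R}}$ there already means $i_{G}^{\mathcal{R}}(r)^{\pm1}$ are positive), so it is a clarification rather than a deviation.
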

\begin{proof}
We prove part (1). If $(\pi,U)$ is a positive non-degenerate $\mathcal{R}$-continuous
covariant representation of $(A,G,\alpha)$, by Lemma \ref{lem:integrated-froms-positive}
and Lemma \ref{lem:bounded-positive-operator-on-ordered-noremed-space-implies-extension-is-positive}
we obtain that $(\pi\rtimes U)^{\mathcal{R}}$ is a positive bounded
representation bounded of $\crossedprod$. That it is non-degenerate
and that $(\overline{(\pi\rtimes U)^{\mathcal{R}}}\circ i_{A}^{\mathcal{R}},\overline{(\pi\rtimes U)^{\mathcal{R}}}\circ i_{G}^{\mathcal{R}})=(\pi,U)$
follows by applying the General Correspondence Theorem (Theorem \ref{thm:General-Correspondence-Theorem}). 

We prove part (2). Since it is assumed that $A$ has a positive bounded
approximate left identity, by Proposition \ref{prop:ordered-crossed-prod-has-positive-bounded-approx-id},
$\crossedprod$ has a positive bounded approximate left identity.
By Theorem \ref{thm:extension-theorem}, $\overline{T}:\leftcent(\crossedprod)\to B(X_{T})$
is a positive representation. By Lemma \ref{lem:iAiG-is-positive},
the maps $i_{A}^{\mathcal{R}}:A\to\leftcent(\crossedprod)$ and $i_{G}^{\mathcal{R}}:G\to\leftcent(\crossedprod)$
are both positive, and therefore $(\overline{T}\circ i_{A}^{\mathcal{R}},\overline{T}\circ i_{G}^{\mathcal{R}})$
is a pair of positive representations of respectively $A$ and $G$
on $X$. The General Correspondence Theorem asserts that $(\overline{T}\circ i_{A}^{\mathcal{R}},\overline{T}\circ i_{G}^{\mathcal{R}})$
is also a non-degenerate $\mathcal{R}$-continuous covariant representation,
and that $(\overline{T}\circ i_{A}^{\mathcal{R}}\rtimes\overline{T}\circ i_{G}^{\mathcal{R}})^{\mathcal{R}}=T$.
\end{proof}

\section{Uniqueness of the pre-ordered crossed product\label{sec:Uniqueness-of-the-crossed-prod}}

Let $(A,G,\alpha)$ be a Banach algebra dynamical system and $\mathcal{R}$
a uniformly bounded class of non-degenerate continuous covariant representations
of $(A,G,\alpha)$ on Banach spaces. In \cite[Theorem 4.4]{CPII}
it was shown (under mild further hypotheses) that the crossed product
$\crossedprod$ is the unique Banach algebra (up to topological isomorphism)
such that the triple $(\crossedprod,i_{A}^{\mathcal{R}},i_{G}^{\mathcal{R}})$
generates all non-degenerate $\mathcal{R}$-continuous covariant representations
of $(A,G,\alpha)$, in the sense that, for every non-degenerate bounded
representation $T$ of $\crossedprod$ on a Banach space $X$, $(\overline{T}\circ i_{A}^{\mathcal{R}},\overline{T}\circ i_{G}^{\mathcal{R}})$
is a non-degenerate $\mathcal{R}$-continuous representation of $(A,G,\alpha)$
on $X$, and that, moreover, all non-degenerate $\mathcal{R}$-continuous
covariant representations of $(A,G,\alpha)$ are obtained in this
way.

In this section we will adapt this to pre-ordered Banach algebra dynamical
systems. If $(A,G,\alpha)$ is a pre-ordered Banach algebra dynamical
system, with $A$ having a closed cone, and $\mathcal{R}$ a uniformly
bounded class of positive non-degenerate continuous covariant representations
of $(A,G,\alpha)$ on pre-ordered Banach spaces with closed cones,
then we will show that (under similar mild hypotheses as in the unordered
case) the pre-ordered crossed product $\crossedprod$ is the unique
pre-ordered Banach algebra (up to bipositive topological isomorphism)
such that the triple $(\crossedprod,i_{A}^{\mathcal{R}},i_{G}^{\mathcal{R}})$
generates all positive non-degenerate $\mathcal{R}$-continuous covariant
representations of $(A,G,\alpha)$ as described above.

We begin with the general framework for generating positive non-degenerate
$\mathcal{R}$-continuous representations from a suitable basic one
as in \cite[Lemma 4.1]{CPII}. 
\begin{lem}
\label{lem:existence_of_generating_pair}Let $(A,G,\alpha)$ be a
pre-ordered Banach algebra dynamical system with $A$ having a closed
cone, and let $\mathcal{R}$ be a uniformly bounded class of continuous
covariant representations of $(A,G,\alpha)$ on Banach spaces. Let
$E$ be a pre-ordered Banach algebra \textup{(}with a not necessarily
closed cone\textup{)} and positive bounded approximate left identity,
and let $(k_{A},k_{G})$ be a positive non-degenerate $\mathcal{R}$-continuous
covariant representation of $(A,G,\alpha)$ on the pre-ordered Banach
space $E$ such that $k_{A}(A),k_{G}(G)\subseteq\leftcent(E)$. Suppose
$T:E\to B(X)$ is a positive non-degenerate bounded representation
of $E$ on a pre-ordered Banach space $X$ with a closed cone. Let
$\overline{T}:\leftcent(E)\to B(X)$ be the positive non-degenerate
bounded representation of $\leftcent(E)$ such that the following
diagram commutes:
\[
\xymatrix{E\ar[dr]^{\lambda}\ar[r]^{T} & B(X)\\
 & \leftcent(E)\ar[u]_{\overline{T}}
}
\]
Then the pair $(\overline{T}\circ k_{A},\overline{T}\circ k_{G})$
is a positive non-degenerate $\mathcal{R}$-continuous covariant representation
of $(A,G,\alpha)$, and $(\overline{T}\circ k_{A})\rtimes(\overline{T}\circ k_{G})=\overline{T}\circ(k_{A}\rtimes k_{G})$.\end{lem}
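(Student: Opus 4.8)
The plan is to obtain all order-free assertions directly from the unordered predecessor \cite[Lemma 4.1]{CPII}, and then to verify only the two positivity statements that are genuinely new to the pre-ordered setting. Applying \cite[Lemma 4.1]{CPII} to the pair $(k_{A},k_{G})$ and the representation $T$ yields at once that $(\overline{T}\circ k_{A},\overline{T}\circ k_{G})$ is a non-degenerate $\mathcal{R}$-continuous covariant representation of $(A,G,\alpha)$ and that $(\overline{T}\circ k_{A})\rtimes(\overline{T}\circ k_{G})=\overline{T}\circ(k_{A}\rtimes k_{G})$. In particular $\overline{T}\circ k_{G}$ is already known to be a group homomorphism into $\textup{Inv}(X)$ and $\overline{T}\circ k_{A}$ an algebra homomorphism into $B(X)$; thus only their positivity remains to be established, which is where the hypotheses on $\mathcal{R}$, $E$, $(k_{A},k_{G})$ and $T$ being \emph{positive} come into play.

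For the positivity I would first record that $\overline{T}$ is itself a positive map. Indeed, $E$ is a pre-ordered Banach algebra, its approximate left identity is positive by hypothesis, $X$ is a pre-ordered Banach space with a closed cone, and $T$ is positive; hence the final clause of Theorem \ref{thm:extension-theorem} applies and $\overline{T}\colon\leftcent(E)\to B(X)$ is positive, where $\leftcent(E)$ carries its cone $\leftcent(E)\cap B(E)_{+}$. The positivity of $\overline{T}\circ k_{A}$ is then immediate: for $a\in A_{+}$ the operator $k_{A}(a)$ lies in $B(E)_{+}$ by positivity of $k_{A}$ and in $\leftcent(E)$ by hypothesis, hence in $\leftcent(E)_{+}$, so that $\overline{T}(k_{A}(a))\in B(X)_{+}$.

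The main point, and the step I expect to require the most care, is the positivity of $\overline{T}\circ k_{G}$, namely that it maps $G$ into $\textup{Inv}_{+}(X)$ rather than merely into $\textup{Inv}(X)\cap B(X)_{+}$. Fixing $s\in G$, positivity of the covariant representation $(k_{A},k_{G})$ gives $k_{G}(s)\in\textup{Inv}_{+}(E)$, so that both $k_{G}(s)$ and $k_{G}(s)^{-1}=k_{G}(s^{-1})$ are positive operators on $E$; since $k_{G}(G)\subseteq\leftcent(E)$, both lie in $\leftcent(E)_{+}$. Applying the positive map $\overline{T}$ yields $\overline{T}(k_{G}(s))\in B(X)_{+}$ and $\overline{T}(k_{G}(s^{-1}))\in B(X)_{+}$. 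Because $\overline{T}\circ k_{G}$ is a group homomorphism into $\textup{Inv}(X)$, its value at $s^{-1}$ is the inverse of its value at $s$, so $(\overline{T}(k_{G}(s)))^{-1}=\overline{T}(k_{G}(s^{-1}))\in B(X)_{+}$; hence $\overline{T}(k_{G}(s))\in\textup{Inv}_{+}(X)$, as required. The subtlety to watch here is that the inverse identity is drawn from the group-homomorphism property already supplied by \cite[Lemma 4.1]{CPII}, and not from any unitality of $\overline{T}$ on $\leftcent(E)$, so the whole argument stays within the established framework and reduces the ordered lemma to its unordered counterpart together with the positivity of the extension $\overline{T}$.
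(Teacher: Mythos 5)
Your proof is correct and follows essentially the same route as the paper: cite \cite[Lemma 4.1]{CPII} for the unordered assertions, then obtain positivity of $(\overline{T}\circ k_{A},\overline{T}\circ k_{G})$ from the positivity of $(k_{A},k_{G})$ together with the positivity of $\overline{T}$ supplied by the final clause of Theorem \ref{thm:extension-theorem}. The only difference is that you spell out the composition details (in particular that $(\overline{T}(k_{G}(s)))^{-1}=\overline{T}(k_{G}(s^{-1}))$ is positive, so the image lands in $\mbox{Inv}_{+}(X)$), which the paper leaves implicit.
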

\begin{proof}
That $(\overline{T}\circ k_{A},\overline{T}\circ k_{G})$ is a non-degenerate
$\mathcal{R}$-continuous covariant representation of $(A,G,\alpha)$
and that $(\overline{T}\circ k_{A})\rtimes(\overline{T}\circ k_{G})=\overline{T}\circ(k_{A}\rtimes k_{G})$
follows from \cite[Lemma 4.1]{CPII}. That $(\overline{T}\circ k_{A},\overline{T}\circ k_{G})$
is positive follows from $(k_{A},k_{G})$ being positive and $\overline{T}:\leftcent(E)\to B(X)$
being positive by Theorem \ref{thm:extension-theorem}.
\end{proof}
Therefore, given a pre-ordered Banach algebra $E$ with such a positive
non-degenerate $\mathcal{R}$-continuous covariant representation
$(k_{A},k_{G})$ of $(A,G,\alpha)$ on $E$, positive non-degenerate
$\mathcal{R}$-continuous covariant representations of $(A,G,\alpha)$
can be generated from positive non-degenerate bounded representations
of $E$.

Clearly, any pre-ordered Banach algebra $E'$ that is bipositively
topologically isomorphic to $E$ must also have a similar positive
non-degenerate $\mathcal{R}$-continuous covariant generating pair
$(k_{A}',k_{G}')$. This is outlined in the following straightforward
adaptation of \cite[Lemma 4.2]{CPII} to the ordered context.
\begin{lem}
\label{lem:generating-pair-translation}Let $(A,G,\alpha)$, $\mathcal{R}$,
$E$ and $(k_{A},k_{G})$ be as in Lemma \ref{lem:existence_of_generating_pair}.
Suppose $E'$ is a pre-ordered Banach algebra and $\psi:E\to E'$
is a bipositive topological isomorphism. Then:
\begin{enumerate}
\item $\psi_{l}:\leftcent(E)\to\leftcent(E')$, defined by $\psi_{l}(L):=\psi L\psi^{-1}$
for $L\in\leftcent(E)$, is a bipositive topological isomorphism.
\item The pair $(k_{A}',k_{G}'):=(\psi_{l}\circ k_{A},\psi_{l}\circ k_{G})$
is a positive non-degenerate $\mathcal{R}$-continuous covariant representation
of $(A,G,\alpha)$ on $E'$ such that $k_{A}'(A)\subseteq\leftcent(E')$ and $k_{G}'(G)\subseteq\leftcent(E')$.
\item If $T:E\to B(X)$ is a positive non-degenerate bounded representation,
then so is $T':E'\to B(X)$, where $T':=T\circ\psi^{-1}$.
\item If $T:E\to B(X)$ is a positive non-degenerate bounded representation
on a pre-ordered Banach space with a closed cone, and $\overline{T'}:\leftcent(E')\to B(X)$
is the positive non-degenerate bounded representation of $\leftcent(E')$
such that the diagram
\[
\xymatrix{E'\ar[dr]^{\lambda}\ar[r]^{T'} & B(X)\\
 & \leftcent(E')\ar[u]_{\overline{T'}}
}
\]
commutes, then $\overline{T}\circ k_{A}=\overline{T'}\circ k_{A}'$
and $\overline{T}\circ k_{G}=\overline{T'}\circ k_{G}'$.
\end{enumerate}
\end{lem}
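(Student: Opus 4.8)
The plan is to prove Lemma \ref{lem:generating-pair-translation} by verifying its four parts in order, treating it as a routine transport-of-structure argument through the bipositive topological isomorphism $\psi$. The unordered skeleton of each assertion is already furnished by \cite[Lemma 4.2]{CPII}; the only genuinely new content here is the \emph{positivity/bipositivity} bookkeeping, so at each step I would invoke the cited unordered result for the algebraic and topological claims and then check separately that cones are respected.

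For part (1), I would verify directly that $\psi_l(L) := \psi L \psi^{-1}$ lands in $\leftcent(E')$ (checking the left-centralizer identity $\psi_l(L)(a'b') = (\psi_l(L)a')b'$ using that $\psi$ is an algebra isomorphism), and that $L \mapsto \psi_l(L)$ is an algebra homomorphism that is a topological isomorphism with inverse $L' \mapsto \psi^{-1} L' \psi$. For bipositivity, recall that $\leftcent(E)$ carries the cone $\leftcent(E) \cap B(E)_+$ (and likewise for $E'$). Thus I must show that $L \in B(E)_+$ iff $\psi L \psi^{-1} \in B(E')_+$; this is exactly where the \emph{bipositivity} of $\psi$ (not merely positivity) is essential, since from $L E_+ \subseteq E_+$ I get $\psi L \psi^{-1}(E'_+) = \psi L (\psi^{-1} E'_+) \subseteq \psi L (E_+) \subseteq \psi(E_+) \subseteq E'_+$ using $\psi^{-1} E'_+ \subseteq E_+$ and $\psi E_+ \subseteq E'_+$, and the reverse implication follows by applying the same argument to $\psi^{-1}$.

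For part (2), positivity of $k_A' = \psi_l \circ k_A$ and $k_G' = \psi_l \circ k_G$ is immediate from composing the positive maps $k_A, k_G$ (positive by hypothesis) with the positive map $\psi_l$ established in part (1); the non-degeneracy, $\mathcal{R}$-continuity, covariance, and the containments $k_A'(A), k_G'(G) \subseteq \leftcent(E')$ are the content of the unordered \cite[Lemma 4.2]{CPII}. For part (3), that $T' = T \circ \psi^{-1}$ is a non-degenerate bounded representation is again from the unordered lemma, and positivity follows since $\psi^{-1}$ is positive (by bipositivity of $\psi$) and $T$ is positive, so $T'(E'_+) = T(\psi^{-1} E'_+) \subseteq T(E_+) \subseteq B(X)_+$. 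Part (4) is the compatibility of the two extensions: I would note that $\overline{T'}$ exists and is positive by Theorem \ref{thm:extension-theorem} applied to the positive non-degenerate $T'$ on a space with closed cone, and that the identities $\overline{T} \circ k_A = \overline{T'} \circ k_A'$ and $\overline{T} \circ k_G = \overline{T'} \circ k_G'$ are purely the unordered statement from \cite[Lemma 4.2]{CPII}, since the extension $\overline{T}$ is \emph{uniquely} determined by $T$ (via the commuting diagram of Theorem \ref{thm:extension-theorem}) regardless of order structure.

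**The main obstacle**, such as it is, is the bipositivity in part (1): one must be careful that composing or conjugating positive maps generally yields only positivity, and that recovering the reverse inclusion $\psi_l^{-1}(B(E')_+) \subseteq B(E)_+$ genuinely requires feeding both inclusions $\psi E_+ \subseteq E'_+$ and $\psi^{-1} E'_+ \subseteq E_+$ into the computation, i.e.\ it is precisely the hypothesis that $\psi$ is bipositive (and an isomorphism) rather than merely a positive topological isomorphism that makes $\psi_l$ bipositive. Everything else is a direct transport through the already-established unordered \cite[Lemma 4.2]{CPII} together with the positivity of the extension furnished by Theorem \ref{thm:extension-theorem}.
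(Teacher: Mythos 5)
Your proposal is correct and matches the paper's (implicit) approach exactly: the paper states this lemma without proof, presenting it as a ``straightforward adaptation of \cite[Lemma 4.2]{CPII} to the ordered context,'' and your argument---invoking the unordered lemma for the algebraic, topological, non-degeneracy and $\mathcal{R}$-continuity claims, then checking the cone inclusions by conjugation through $\psi$, with bipositivity of $\psi_l$ resting on both inclusions $\psi E_+\subseteq E'_+$ and $\psi^{-1}E'_+\subseteq E_+$---is precisely the intended routine verification. The only detail worth making explicit is that $E'$ inherits a positive bounded approximate left identity (namely the $\psi$-image of the one for $E$), which is what licenses the application of Theorem \ref{thm:extension-theorem} to $T'$ in part (4).
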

If $A$ has a positive bounded approximate left identity, then, according
to Proposition \ref{prop:ordered-crossed-prod-has-positive-bounded-approx-id}
and Lemma \ref{lem:iAiG-is-positive}, the triple $(\crossedprod,i_{A}^{\mathcal{R}},i_{G}^{\mathcal{R}})$
satisfies the hypotheses of Lemma \ref{lem:existence_of_generating_pair},
and by Theorem \ref{thm:Ordered-general-correspondence} \emph{all
}positive non-degenerate $\mathcal{R}$-continuous covariant representations
of $(A,G,\alpha)$ can be ``generated'' from positive non-degenerate
bounded representations of $\crossedprod$ as in Lemma \ref{lem:existence_of_generating_pair}.
By Lemma \ref{lem:generating-pair-translation}, a bipositive topological
isomorphism between $\crossedprod$ and another pre-ordered Banach
algebra yields a triple with the same properties. Our aim in the rest
of this section is to establish the converse: If $(E,k_{A},k_{G})$
(where now $E$ has a closed cone) is a ``generating triple'' for
\emph{all }positive non-degenerate $\mathcal{R}$-continuous covariant
representations of $(A,G,\alpha)$ as in Lemma \ref{lem:existence_of_generating_pair},
then, under mild additional hypotheses, this triple can be obtained
from $(\crossedprod,i_{A}^{\mathcal{R}},i_{G}^{\mathcal{R}})$ via
a bipositive topological isomorphism as in Lemma \ref{lem:generating-pair-translation}
(cf.\,Corollary \ref{cor:sufficient-conditions-uniqueness-of-crossed-prod}).

In order to do this, we will need the existence of a positive isometric
representation of $\crossedprod$ on some pre-ordered Banach space
with a closed cone. As in \cite[Proposition 3.4]{CPII}, this is achieved
through combining sufficiently many members of $\mathcal{R}$ into
one suitable positive continuous covariant representation.
\begin{defn}
\label{def:equivalent_uniformly_bounded_sets}Let $(A,G,\alpha)$
be a Banach algebra dynamical system. Let $\mathcal{R}$ be a uniformly
bounded class of possibly degenerate continuous covariant representations
of $(A,G,\alpha)$ on Banach spaces. We define $[\mathcal{R}]$ to
be the collection of all uniformly bounded classes $S$ that are actually
sets and satisfy $\sigma^{\mathcal{R}}=\sigma^{S}$ on $C_{c}(G,A)$.
Elements of some $[\mathcal{R}]$ will be called \emph{uniformly bounded
sets of continuous covariant representations.}
\end{defn}
We note that $[\mathcal{R}]$ is always non-empty: For every $f\in C_{c}(G,A)$,
considering the set $\{\|\pi\rtimes U(f)\|:(\pi,U)\in\mathcal{R}\}\subseteq\mathbb{R}$
(subclasses of sets are sets), we may choose a sequence from $\{\|\pi\rtimes U(f)\|:(\pi,U)\in\mathcal{R}\}$
converging to $\sigma^{\mathcal{R}}(f)$, and consider only those
corresponding covariant representations from $\mathcal{R}$. In this
way we may choose a set $S$ from $\mathcal{R}$ of cardinality at
most $|C_{c}(G,A)\times\mathbb{N}|$ such that $\sigma^{S}(f)=\sigma^{\mathcal{R}}(f)$
for all $f\in C_{c}(G,A)$. Therefore the previous definition is non-void.

{}
\begin{defn}
Let $I$ be an index set and $\{X_{i}:i\in I\}$ a family of pre-ordered
Banach spaces with closed cones. For $1\leq p\leq\infty$, we will
denote the $\ell^{p}$-direct sum of $\{X_{i}:i\in I\}$ by $\ell^{p}\{X_{i}:i\in I\}$
and endow it with the cone $\ell^{p}\{(X_{i})_{+}:i\in I\}$, so that
it is a pre-ordered Banach space with a closed cone.
\end{defn}
{}
\begin{defn}
\label{def:lp-direct-sum-realization}Let $(A,G,\alpha)$ be a pre-ordered
Banach algebra dynamical system, with $A$ having a closed cone, and
$\mathcal{R}$ a uniformly bounded class of positive continuous covariant
representations of $(A,G,\alpha)$ on pre-ordered Banach spaces with
closed cones. For $S\in[\mathcal{R}]$ and $1\leq p<\infty$, suppressing
the $p$-dependence in the notation, we define the positive representations
$(\oplus_{S}\pi):A\to B(\ell^{p}\{X_{\pi}:(\pi,U)\in S\})$ and $(\oplus_{S}U):G\to B(\ell^{p}\{X_{\pi}:(\pi,U)\in S\})$
by $(\oplus_{S}\pi)(a):=\bigoplus_{(\pi,U)\in S}\pi(a)$ and $(\oplus_{S}U)_{r}:=\bigoplus_{(\pi,U)\in S}U_{r}$,
for all $a\in A$ and $r\in G$ respectively. 
\end{defn}
It is easily seen that $((\oplus_{S}\pi),(\oplus_{S}U))$ is a positive
continuous covariant representation, that 
\[
((\oplus_{S}\pi)\rtimes(\oplus_{S}U))(f)=\bigoplus_{(\pi,U)\in S}\pi\rtimes U(f),
\]
and that $\|((\oplus_{S}\pi)\rtimes(\oplus_{S}U))(f)\|=\sigma^{S}(f)=\sigma^{\mathcal{R}}(f)$,
for all $f\in C_{c}(G,A)$. 

We hence obtain the following (where the statement concerning non-degeneracy
is an elementary verification).
\begin{prop}
\label{prop:isometric_realization}Let $(A,G,\alpha)$ be a pre-ordered
Banach algebra dynamical system, where $A$ has a closed cone, and
$\mathcal{R}$ a uniformly bounded class of positive continuous covariant
representations of $(A,G,\alpha)$ on pre-ordered Banach spaces with
closed cones. For any $S\in[\mathcal{R}]$ and $1\leq p<\infty,$
there exists a positive $\mathcal{R}$-continuous covariant representation
of $(A,G,\alpha)$ on the pre-ordered Banach space $\ell^{p}\{X_{\pi}:(\pi,U)\in S\}$
with a closed cone, denoted $((\oplus_{S}\pi),(\oplus_{S}U))$, such
that its positive integrated form satisfies $\|((\oplus_{S}\pi)\rtimes(\oplus_{S}U))(f)\|=\sigma^{\mathcal{R}}(f)$
for all $f\in C_{c}(G,A)$ and hence induces a positive isometric
representation of $\crossedprod$ on $\ell^{p}\{X_{\pi}:(\pi,U)\in S\}$.

If every element of $S$ is non-degenerate, then $((\oplus_{S}\pi),(\oplus_{S}U))$
is non-degenerate.
\end{prop}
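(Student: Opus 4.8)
The plan is to assemble the componentwise facts recorded immediately after Definition~\ref{def:lp-direct-sum-realization} and then verify the four remaining assertions: that $((\oplus_{S}\pi),(\oplus_{S}U))$ is $\mathcal{R}$-continuous, that it induces a representation of $\crossedprod$, that this induced representation is positive and isometric, and that non-degeneracy is inherited from $S$. First I would fix $S\in[\mathcal{R}]$ and $1\le p<\infty$ and regard $\ell^{p}\{X_{\pi}:(\pi,U)\in S\}$ as a pre-ordered Banach space with the closed cone $\ell^{p}\{(X_{\pi})_{+}:(\pi,U)\in S\}$. Since $(\oplus_{S}\pi)$ and $(\oplus_{S}U)$ act coordinatewise and each $\pi$, each $U_{r}$, maps its cone into itself, the direct sums preserve $\ell^{p}\{(X_{\pi})_{+}:(\pi,U)\in S\}$; together with the coordinatewise check of boundedness, strong continuity and the covariance relation, this yields the positive continuous covariant representation asserted before the proposition.

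Next I would use the two identities preceding the statement. The coordinatewise formula $((\oplus_{S}\pi)\rtimes(\oplus_{S}U))(f)=\bigoplus_{(\pi,U)\in S}\pi\rtimes U(f)$, combined with the fact that the operator norm of a coordinatewise direct sum on an $\ell^{p}$-sum is the supremum of the coordinate norms, gives $\|((\oplus_{S}\pi)\rtimes(\oplus_{S}U))(f)\|=\sup_{(\pi,U)\in S}\|\pi\rtimes U(f)\|=\sigma^{S}(f)$, and $\sigma^{S}=\sigma^{\mathcal{R}}$ on $C_{c}(G,A)$ by the definition of $[\mathcal{R}]$ (Definition~\ref{def:equivalent_uniformly_bounded_sets}). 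In particular the integrated form is bounded with respect to $\sigma^{\mathcal{R}}$, so $((\oplus_{S}\pi),(\oplus_{S}U))$ is $\mathcal{R}$-continuous, and by the construction recalled in Section~\ref{sub:unordered-basic-definitions} it descends to a bounded representation $((\oplus_{S}\pi)\rtimes(\oplus_{S}U))^{\mathcal{R}}$ of $\crossedprod$. Because the descent is isometric on the image of $q^{\mathcal{R}}$, which is dense, and both $\crossedprod$ and $B(\ell^{p}\{X_{\pi}:(\pi,U)\in S\})$ are complete, this induced representation is isometric.

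For positivity I would invoke Lemma~\ref{lem:integrated-froms-positive}: the integrated form of a positive continuous covariant representation on a space with closed cone is positive, hence so is its factorization through $C_{c}(G,A)/\ker\sigma^{\mathcal{R}}$. Since the cone of $\ell^{p}\{X_{\pi}:(\pi,U)\in S\}$ is closed, the natural operator cone of $B(\ell^{p}\{X_{\pi}:(\pi,U)\in S\})$ is closed, and Lemma~\ref{lem:bounded-positive-operator-on-ordered-noremed-space-implies-extension-is-positive} then shows that the extension to the completion $\crossedprod$ remains positive. Finally, for non-degeneracy I would show that if every $(\pi,U)\in S$ is non-degenerate, then $\mathrm{span}\{(\oplus_{S}\pi)(a)\xi:a\in A,\ \xi\in\ell^{p}\{X_{\pi}:(\pi,U)\in S\}\}$ is dense: one approximates an arbitrary vector first by a finitely supported one (here $p<\infty$ is essential) and then approximates each nonzero coordinate using non-degeneracy of the corresponding $\pi$. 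This last verification is the only genuinely new computation, and the restriction $p<\infty$ --- guaranteeing density of finitely supported vectors in the $\ell^{p}$-sum --- is the point one must not overlook; everything else is a routine transfer of the coordinatewise facts through the two cited lemmas.
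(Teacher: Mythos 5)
Your proposal is correct and takes essentially the same approach as the paper: the paper establishes the coordinatewise facts and the identity $\|((\oplus_{S}\pi)\rtimes(\oplus_{S}U))(f)\|=\sigma^{S}(f)=\sigma^{\mathcal{R}}(f)$ in the discussion immediately preceding the proposition, and treats positivity (via Lemmas \ref{lem:integrated-froms-positive} and \ref{lem:bounded-positive-operator-on-ordered-noremed-space-implies-extension-is-positive}) and non-degeneracy as elementary verifications. Your write-up simply supplies the details --- the $\ell^{p}$ direct-sum operator norm computation, the isometric extension to the completion, and the density of finitely supported vectors for $p<\infty$ --- that the paper leaves implicit.
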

In the following theorem we will give sufficient conditions under which
a triple $(E,k_{A},k_{G})$, generating all positive non-degenerate
$\mathcal{R}$-continuous covariant representations of a pre-ordered
Banach algebra dynamical system $(A,G,\alpha)$ as in Lemma \ref{lem:existence_of_generating_pair},
can be obtained from $(\crossedprod,i_{A}^{\mathcal{R}},i_{G}^{\mathcal{R}})$
through a bipositive topological isomorphism as in Lemma \ref{lem:generating-pair-translation}.
\begin{thm}
\label{thm:universal-property}Let $(A,G,\alpha)$ be a pre-ordered
Banach algebra dynamical system with $A$ having a closed cone and
a positive bounded approximate left identity. Let $\mathcal{R}$ be
a uniformly bounded class of positive non-degenerate continuous covariant
representations of $(A,G,\alpha)$ on pre-ordered Banach spaces with
closed cones. Let $E$ be a pre-ordered Banach algebra, with closed
cone and positive bounded approximate left identity and such that
$\lambda:E\to\lambda(E)\subseteq\leftcent(E)$ is a bipositive topological
embedding. Let $(k_{A},k_{G})$ be a positive non-degenerate $\mathcal{R}$-continuous
covariant representation of $(A,G,\alpha)$ on the pre-ordered Banach
space $E$ such that:
\begin{enumerate}
\item $k_{A}(A),k_{G}(G)\subseteq\leftcent(E)$,
\item $(k_{A}\rtimes k_{G})(C_{c}(G,A))\subseteq\lambda(E)$,
\item $(k_{A}\rtimes k_{G})(C_{c}(G,A))$ is dense in $\lambda(E)$,
\item $(k_{A}\rtimes k_{G})(C_{c}(G,A_{+}))$ is dense in $\lambda(E)\cap\leftcent(E)_{+}$.
\end{enumerate}
Suppose that, for every positive non-degenerate $\mathcal{R}$-continuous
covariant representation $(\pi,U)$ of $(A,G,\alpha)$ on a pre-ordered
Banach space $X$ with a closed cone, there exists a positive non-degenerate
bounded representation $T:E\to B(X)$ such that the positive non-degenerate
bounded representation $\overline{T}:\leftcent(E)\to B(X)$ in the
commuting diagram
\[
\xymatrix{E\ar[dr]^{\lambda}\ar[r]^{T} & B(X)\\
 & \leftcent(E)\ar[u]_{\overline{T}}
}
\]
generates $(\pi,U)$ as in Lemma \ref{lem:existence_of_generating_pair},
i.e., is such that $\overline{T}\circ k_{A}=\pi$ and $\overline{T}\circ k_{G}=U$.

Then there exists a unique topological isomorphism $\psi:\crossedprod\to E$
such that the induced topological isomorphism $\psi_{l}:\leftcent(\crossedprod)\to\leftcent(E)$,
defined by $\psi_{l}(L):=\psi L\psi^{-1}$ for $L\in\leftcent(\crossedprod)$,
induces $(k_{A},k_{G})$ from $(i_{A}^{\mathcal{R}},i_{G}^{\mathcal{R}})$
as in Lemma \ref{lem:generating-pair-translation}, i.e., is such
that $k_{A}=\psi_{l}\circ i_{A}^{\mathcal{R}}$ and $k_{G}=\psi_{l}\circ i_{G}^{\mathcal{R}}$. 

Moreover, $\psi$ is bipositive.
\end{thm}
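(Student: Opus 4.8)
The plan is to construct $\psi$ explicitly from the integrated form of $(k_A,k_G)$ together with the embedding $\lambda$, to prove it is a topological isomorphism by feeding the positive isometric realization of Proposition \ref{prop:isometric_realization} into the generating hypothesis, and finally to extract bipositivity from hypothesis (4). Since $(k_A,k_G)$ is $\mathcal{R}$-continuous, its integrated form induces a bounded algebra homomorphism $(k_A\rtimes k_G)^{\mathcal{R}}:\crossedprod\to\leftcent(E)$; by hypothesis (2) and completeness of $E$ (so that $\lambda(E)$ is closed in $\leftcent(E)$) its range lies in the subalgebra $\lambda(E)$, on which $\lambda$ is an algebra isomorphism. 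I would therefore set $\psi:=\lambda^{-1}\circ(k_A\rtimes k_G)^{\mathcal{R}}$, a bounded algebra homomorphism satisfying $\lambda\circ\psi=(k_A\rtimes k_G)^{\mathcal{R}}$.

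To see that $\psi$ is a topological isomorphism, choose $S\in[\mathcal{R}]$ and form the positive isometric realization $\rho:=((\oplus_S\pi)\rtimes(\oplus_S U))^{\mathcal{R}}$ on $Y:=\ell^p\{X_\pi:(\pi,U)\in S\}$. As $((\oplus_S\pi),(\oplus_S U))$ is a positive non-degenerate $\mathcal{R}$-continuous covariant representation, the generating hypothesis supplies a positive non-degenerate bounded $T:E\to B(Y)$ with $\overline{T}\circ k_A=\oplus_S\pi$ and $\overline{T}\circ k_G=\oplus_S U$. Lemma \ref{lem:existence_of_generating_pair} gives $\overline{T}\circ(k_A\rtimes k_G)=(\oplus_S\pi)\rtimes(\oplus_S U)$, and combining this with $\overline{T}\circ\lambda=T$ and $\lambda\circ\psi=(k_A\rtimes k_G)^{\mathcal{R}}$ yields $\rho=T\circ\psi$ on $\crossedprod$. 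Since $\rho$ is isometric, $\psi$ is bounded below, hence injective with closed range; as $\lambda^{-1}((k_A\rtimes k_G)(C_c(G,A)))$ is dense in $E$ by hypothesis (3), the range is all of $E$. This reproduces the argument of \cite[Theorem 4.4]{CPII}, the essential observation being that the generating hypothesis is applied \emph{only} to the positive isometric realization, so its restriction to positive representations suffices and the unordered theorem need not be invoked as a black box. The intertwining relations $k_A=\psi_l\circ i_A^{\mathcal{R}}$ and $k_G=\psi_l\circ i_G^{\mathcal{R}}$ follow from the identity $(k_A\rtimes k_G)(i_A(a)f)=k_A(a)\,(k_A\rtimes k_G)(f)$ (and its $i_G$-analogue) on the dense subalgebra $q^{\mathcal{R}}(C_c(G,A))$, and uniqueness follows since the intertwining relations together with multiplicativity force $\lambda\circ\psi'=(k_A\rtimes k_G)^{\mathcal{R}}=\lambda\circ\psi$ for any admissible $\psi'$, exactly as in \cite[Theorem 4.4]{CPII}.

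Positivity of $\psi$ is immediate: $k_A\rtimes k_G$ is positive by Lemma \ref{lem:integrated-froms-positive}, hence $(k_A\rtimes k_G)^{\mathcal{R}}$ is positive by Lemma \ref{lem:bounded-positive-operator-on-ordered-noremed-space-implies-extension-is-positive} (the cone of $\leftcent(E)$ being closed), and $\psi=\lambda^{-1}\circ(k_A\rtimes k_G)^{\mathcal{R}}$ is then positive because $\lambda$ is bipositive; thus $\psi(\crossedprod_+)\subseteq E_+$. The genuinely new step, which I expect to be the main obstacle, is the reverse inclusion $E_+\subseteq\psi(\crossedprod_+)$, and this is precisely where hypothesis (4) enters. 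I would first note that bipositivity of $\lambda$ gives $\lambda(E_+)=\lambda(E)\cap\leftcent(E)_+$, and that $(k_A\rtimes k_G)^{\mathcal{R}}(\crossedprod_+)=\lambda(\psi(\crossedprod_+))$ is \emph{closed} in $\leftcent(E)$: indeed $\crossedprod_+$ is closed by Theorem \ref{thm:ordered-crossed-prod-properties}(1), $\psi$ is a homeomorphism, and $\lambda$ is a homeomorphism onto the closed subspace $\lambda(E)$. Since $(k_A\rtimes k_G)(C_c(G,A_+))\subseteq\lambda(\psi(\crossedprod_+))$, the density asserted by hypothesis (4) then forces $\lambda(E)\cap\leftcent(E)_+\subseteq\lambda(\psi(\crossedprod_+))$, whence $\lambda(E_+)\subseteq\lambda(\psi(\crossedprod_+))$ and, applying $\lambda^{-1}$, $E_+\subseteq\psi(\crossedprod_+)$.

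Combining the two inclusions yields $\psi(\crossedprod_+)=E_+$ and hence $\psi^{-1}(E_+)\subseteq\crossedprod_+$, so $\psi$ is bipositive in the sense of Section \ref{sub:Ordered-vector-spaces}. The delicate point throughout the last step is that hypothesis (4) only provides density in $\lambda(E)\cap\leftcent(E)_+$, so it is crucial to establish that $\psi(\crossedprod_+)$ is already closed before comparing cones; without the closedness of $\crossedprod_+$ and the homeomorphism property of $\psi$ and $\lambda$, one would recover only $E_+\subseteq\overline{\psi(\crossedprod_+)}$, which is insufficient for bipositivity.
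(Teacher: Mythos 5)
Your proposal is correct and takes essentially the same approach as the paper: the same map $\psi=\lambda^{-1}\circ(k_{A}\rtimes k_{G})^{\mathcal{R}}$, the same use of the positive isometric realization from Proposition \ref{prop:isometric_realization} fed into the generating hypothesis to obtain that $(k_{A}\rtimes k_{G})^{\mathcal{R}}$ is a topological isomorphism onto $\lambda(E)$, the same appeal to hypothesis (4) together with closedness of $\crossedprod_{+}$ for bipositivity, and the same deferral of the intertwining and uniqueness claims to \cite[Theorem 4.4]{CPII}. Your closed-image formulation of the bipositivity step is merely a set-level repackaging of the paper's sequential argument, so no substantive difference remains.
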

The proof follows largely as in \cite[Proposition 4.3]{CPII}, but
with some modifications in the first part of the proof, which we now
give.
\begin{proof}
By hypothesis $\mathcal{R}$ consists of positive non-degenerate continuous
covariant representations of $(A,G,\alpha)$ on pre-ordered Banach
spaces with closed cones. Hence Proposition \ref{prop:isometric_realization}
provides a positive non-degenerate $\mathcal{R}$-continuous covariant
representation $(\pi,U)$ of $(A,G,\alpha)$ on a pre-ordered Banach
space $X$ with a closed cone such that $(\pi\rtimes U)^{\mathcal{R}}:\crossedprod\to B(X)$
is a positive non-degenerate isometric representation. By hypothesis,
there exists a positive non-degenerate representation $T:E\to B(X)$
such that $\overline{T}\circ k_{A}=\pi$ and $\overline{T}\circ k_{G}=U$.
By Lemma \ref{lem:existence_of_generating_pair}, we obtain $\pi\rtimes U=(\overline{T}\circ k_{A})\rtimes(\overline{T}\circ k_{G})=\overline{T}\circ(k_{A}\rtimes k_{G})$.
Then, for any $f\in C_{c}(G,A)$, 
\begin{eqnarray*}
\|q^{\mathcal{R}}(f)\| & = & \|(\pi\rtimes U)^{\mathcal{R}}(q^{\mathcal{R}}(f))\|\\
 & = & \|\pi\rtimes U(f)\|\\
 & = & \|\overline{T}\circ(k_{A}\rtimes k_{G})(f)\|\\
 & \leq & \|\overline{T}\|\|k_{A}\rtimes k_{G}(f)\|\\
 & = & \|\overline{T}\|\|(k_{A}\rtimes k_{G})^{\mathcal{R}}(q^{\mathcal{R}}(f))\|.
\end{eqnarray*}
Since $(k_{A},k_{G})$ was assumed to be $\mathcal{R}$-continuous,
we obtain $\|(k_{A}\rtimes k_{G})^{\mathcal{R}}(q^{\mathcal{R}}(f))\|\leq\|(k_{A}\rtimes k_{G})^{\mathcal{R}}\|\|q^{\mathcal{R}}(f)\|$.
Using (2), (3) and the fact that $\lambda(E)$ is closed, it now follows
that $(k_{A}\rtimes k_{G})^{\mathcal{R}}:\crossedprod\to\lambda(E)$
is a topological isomorphism.

Since $(k_{A},k_{G})$ is positive and $\mathcal{R}$-continuous,
and the cone of $E$ is closed, by Lemmas \ref{lem:integrated-froms-positive}
and \ref{lem:bounded-positive-operator-on-ordered-noremed-space-implies-extension-is-positive},
$(k_{A}\rtimes k_{G})^{\mathcal{R}}:\crossedprod\to\lambda(E)$ is
positive. We claim that $(k_{A}\rtimes k_{G})^{\mathcal{R}}$ is bipositive.
Let $b\in E$ be such that $\lambda(b)\in\lambda(E)\cap\leftcent(E)_{+}$,
hence by (4) there exists a sequence $(f_{n})\subseteq C_{c}(G,A_{+})$
such that $(k_{A}\rtimes k_{G})^{\mathcal{R}}(q^{\mathcal{R}}(f_{n}))=(k_{A}\rtimes k_{G})(f_{n})\to\lambda(b)$.
Since $(k_{A}\rtimes k_{G})^{\mathcal{R}}:\crossedprod\to\lambda(E)$
is a topological isomorphism, the sequence $(q^{\mathcal{R}}(f_{n}))\subseteq\crossedprod_{+}$
converges to some $d\in\crossedprod$ and $(k_{A}\rtimes k_{G})^{\mathcal{R}}(d)=\lambda(b)$.
Moreover, since $\crossedprod_{+}$ is closed and $(q^{\mathcal{R}}(f_{n}))\subseteq\crossedprod_{+}$,
we have $d\in\crossedprod_{+}$. We conclude that $(k_{A}\rtimes k_{G})^{\mathcal{R}}$
is bipositive. 

Since $\lambda:E\to\leftcent(E)$ is assumed to be a bipositive topological
embedding, 
\[
\psi:=\lambda^{-1}\circ(k_{A}\rtimes k_{G})^{\mathcal{R}}:\crossedprod\to E
\]
is a bipositive topological isomorphism.

The remainder of the argument proceeds as in the proof of \cite[Theorem 4.4]{CPII}.
\end{proof}
Under the conditions on $(A,G,\alpha)$ and $\mathcal{R}$ as stated
in the previous theorem, one would of course hope that the triple
$(\crossedprod,i_{A}^{\mathcal{R}},i_{G}^{\mathcal{R}})$ automatically
satisfies the hypotheses on $(E,k_{A},k_{G})$, as happens in the
unordered context \cite[Theorem 4.4]{CPII}. Here, as there, the left
regular representation $\lambda:\crossedprod\to\leftcent(\crossedprod)$
is a topological embedding \cite[Proposition 4.3]{CPII}, and, since
$q^{\mathcal{R}}(C_{c}(G,A))$ is dense in $\crossedprod$ and $(i_{A}^{\mathcal{R}}\rtimes i_{G}^{\mathcal{R}})^{\mathcal{R}}=\lambda$
\cite[Theorem 7.2]{2011arXiv1104.5151D}, we have that $(i_{A}^{\mathcal{R}}\rtimes i_{G}^{\mathcal{R}})(C_{c}(G,A))$
is dense in $\lambda(\crossedprod)$. Hence (1), (2) and (3) in Theorem
\ref{thm:universal-property} are satisfied by $(\crossedprod,i_{A}^{\mathcal{R}},i_{G}^{\mathcal{R}})$.
We claim that the additional assumption that $A$ has a positive bounded
approximate right identity gives the remaining conditions that $\lambda:\crossedprod\to\leftcent(\crossedprod)$
is a bipositive topological embedding, and that (4) holds. Indeed,
if this is the case, let $(u_{i})\subseteq\crossedprod_{+}$ be a
positive approximate right identity of $\crossedprod$ (which exists
by Theorem \ref{prop:ordered-crossed-prod-has-positive-bounded-approx-id}),
and let $d\in\crossedprod$ be such that $\lambda(d)\geq0$. Then
$0\leq\lambda(d)u_{i}=d*u_{i}\to d$, so that $d\in\crossedprod_{+}$
. We conclude that $\lambda^{-1}:\lambda(\crossedprod)\to\crossedprod$
is also positive. Since $(i_{A}^{\mathcal{R}}\rtimes i_{G}^{\mathcal{R}})^{\mathcal{R}}=\lambda$,
this also gives that $(i_{A}^{\mathcal{R}}\rtimes i_{G}^{\mathcal{R}})(C_{c}(G,A_{+}))$
is dense in $\lambda(\crossedprod)\cap\leftcent(\crossedprod)_{+}$.
Hence we have the following uniqueness result:
\begin{cor}
\label{cor:sufficient-conditions-uniqueness-of-crossed-prod}Let $(A,G,\alpha)$
be a pre-ordered Banach algebra dynamical system, with $A$ having
a closed cone and both a positive bounded approximate left identity
and a positive bounded approximate right identity. Let $\mathcal{R}$
be a uniformly bounded class of positive non-degenerate continuous
covariant representations of $(A,G,\alpha)$ on pre-ordered Banach
spaces with closed cones. Then $(\crossedprod,i_{A}^{\mathcal{R}},i_{G}^{\mathcal{R}})$
satisfies all hypotheses on the triple $(E,k_{A},k_{G})$ in Theorem
\ref{thm:universal-property}. Hence triples $(E,k_{A},k_{G})$ as
in Theorem \ref{thm:universal-property} exist, and every such ``generating
triple'' for all positive non-degenerate $\mathcal{R}$-continuous
representations of $(A,G,\alpha)$ originates from $(\crossedprod,i_{A}^{\mathcal{R}},i_{G}^{\mathcal{R}})$
through a bipositive topological isomorphism $\psi:\crossedprod\to E$
as in Theorem \ref{thm:universal-property} \textup{(}so that $E$
necessarily has a positive bounded approximate right identity as well\textup{)}.
\end{cor}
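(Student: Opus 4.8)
The plan is to prove the corollary by verifying, one hypothesis at a time, that the concrete triple $(\crossedprod, i_A^{\mathcal{R}}, i_G^{\mathcal{R}})$ satisfies every condition imposed on the abstract triple $(E, k_A, k_G)$ in Theorem \ref{thm:universal-property}; once this is done, that theorem applies verbatim and delivers both the existence of generating triples and the asserted uniqueness up to bipositive topological isomorphism. I would begin by collecting the hypotheses that transfer directly, together with the order refinements already in place. Namely: $\crossedprod$ is a pre-ordered Banach algebra with closed cone by Theorem \ref{thm:ordered-crossed-prod-properties}(1) and has a positive bounded approximate left identity by Proposition \ref{prop:ordered-crossed-prod-has-positive-bounded-approx-id}; the pair $(i_A^{\mathcal{R}}, i_G^{\mathcal{R}})$ is a positive non-degenerate $\mathcal{R}$-continuous covariant representation with $i_A^{\mathcal{R}}(A), i_G^{\mathcal{R}}(G) \subseteq \leftcent(\crossedprod)$ by Lemma \ref{lem:iAiG-is-positive}, giving condition (1); and, using $(i_A^{\mathcal{R}} \rtimes i_G^{\mathcal{R}})^{\mathcal{R}} = \lambda$ from \cite[Theorem 7.2]{2011arXiv1104.5151D} together with the density of $q^{\mathcal{R}}(C_c(G,A))$ in $\crossedprod$, conditions (2) and (3). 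The generating hypothesis is precisely part (2) of the ordered General Correspondence Theorem (Theorem \ref{thm:Ordered-general-correspondence}): for a positive non-degenerate $\mathcal{R}$-continuous $(\pi,U)$ on a pre-ordered Banach space with closed cone, $T := (\pi \rtimes U)^{\mathcal{R}}$ is a positive non-degenerate bounded representation of $\crossedprod$ with $\overline{T} \circ i_A^{\mathcal{R}} = \pi$ and $\overline{T} \circ i_G^{\mathcal{R}} = U$.

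I expect the genuine work, and the main obstacle, to lie in the two order-theoretic requirements that have no unordered analogue: that $\lambda : \crossedprod \to \leftcent(\crossedprod)$ is \emph{bipositive} (not merely positive) and that condition (4) holds. From \cite[Proposition 4.3]{CPII} the map $\lambda$ is already a topological embedding, and it is positive because left multiplication by a positive element keeps $\crossedprod_+$ invariant. The missing direction is the positivity of $\lambda^{-1}$, and this is exactly where I would invoke the hypothesis of a positive bounded approximate \emph{right} identity. The idea is to fix such a net $(u_i) \subseteq \crossedprod_+$, which Proposition \ref{prop:ordered-crossed-prod-has-positive-bounded-approx-id} guarantees, and, for $d \in \crossedprod$ with $\lambda(d) \geq 0$, to observe that $\lambda(d) u_i = d * u_i$ lies in $\crossedprod_+$ (being a positive operator applied to a positive element) while $d * u_i \to d$; since $\crossedprod_+$ is closed, $d \in \crossedprod_+$, so $\lambda$ is bipositive.

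Finally, I would derive condition (4) from bipositivity. Bipositivity yields the identity $\lambda(\crossedprod_+) = \lambda(\crossedprod) \cap \leftcent(\crossedprod)_+$, and by the construction of the pre-ordered crossed product (Definitions \ref{def:pre-ordered-crossed-prod} and \ref{def:completion-or-ordered-normed-space}) the set $q^{\mathcal{R}}(C_c(G,A_+))$ is dense in $\crossedprod_+$. Transporting this density through the homeomorphism $\lambda$ and using $(i_A^{\mathcal{R}} \rtimes i_G^{\mathcal{R}})(f) = \lambda(q^{\mathcal{R}}(f))$ shows that $(i_A^{\mathcal{R}} \rtimes i_G^{\mathcal{R}})(C_c(G,A_+))$ is dense in $\lambda(\crossedprod) \cap \leftcent(\crossedprod)_+$, which is condition (4). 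Having verified all hypotheses, Theorem \ref{thm:universal-property} finishes the proof; the closing parenthetical claim that any such $E$ also carries a positive bounded approximate right identity I would obtain by pushing $(u_i)$ forward along the bipositive isomorphism $\psi : \crossedprod \to E$.
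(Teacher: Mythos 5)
Your proposal is correct and follows essentially the same route as the paper: conditions (1)--(3) via Lemma \ref{lem:iAiG-is-positive} and the unordered results, the generating hypothesis via the ordered General Correspondence Theorem, the positive bounded approximate right identity argument ($0\leq\lambda(d)u_{i}=d*u_{i}\to d$ with $\crossedprod_{+}$ closed) for bipositivity of $\lambda$, and transporting the density of $q^{\mathcal{R}}(C_{c}(G,A_{+}))$ in $\crossedprod_{+}$ through the embedding $\lambda$ to obtain condition (4). The only slip is a mislabel: the generating hypothesis is part (1), not part (2), of Theorem \ref{thm:Ordered-general-correspondence}, though the content you quote is the correct one.
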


\section{Pre-ordered generalized Beurling algebras\label{sec:Pre-ordered-generalized-Beurling}}

In \cite[Section 5]{CPII} it was shown that a generalized Beurling
algebra (to be defined below) is topologically isomorphic to a crossed
product associated with a Banach algebra dynamical system, and the
non-degenerate bounded representations of these algebras were described
in terms of non-degenerate continuous covariant representations of
the underlying Banach algebra dynamical system. We refer the reader
to \cite[Section 5]{CPII} for a more complete treatment of generalized
Beurling algebras and how they are constructed from Banach algebra
dynamical systems.

In this section we will adapt the main results from \cite[Section 5]{CPII}
to the case of pre-ordered Banach algebra dynamical systems and pre-ordered
generalized Beurling algebras. Theorem \ref{thm:Choosing-R-correctly-Crossed-Products-are-beurling}
is the analogue of \cite[Theorem 5.17]{CPII} in the ordered context,
and shows that a pre-ordered generalized Beurling algebra is bipositively
topologically isomorphic to a crossed product associated with a pre-ordered
Banach algebra dynamical system. In Theorem \ref{thm:continuous-non-deg-covars-are-R-continuous}
we modify \cite[Theorem 5.20]{CPII} to explicitly describe a bijection
between the positive non-degenerate continuous covariant representations
of a pre-ordered Banach algebra dynamical system, where the group
representation is bounded by a multiple of a fixed weight on the underlying
group, and the positive non-degenerate bounded representations of
the associated pre-ordered generalized Beurling algebra. 

We begin with a brief description of pre-ordered generalized Beurling
algebras and related spaces.
\begin{defn}
For a locally compact group $G$, let $\omega:G\to[0,\infty)$ be
a non-zero submultiplicative Borel measurable function. Then $\omega$
is called a \emph{weight }on $G$. 
\end{defn}
Note that we do not assume that $\omega\geq1$, as is done in some
parts of the literature. The fact that $\omega$ is non-zero readily
implies that $\omega(e)\geq1$. More generally, if $K\subseteq G$
is a compact set, there exist $a,b>0$ such that $a\leq\omega(s)\leq b$
for all $s\in K$ \cite[Lemma 1.3.3]{Kaniuth}. 

{}
\begin{defn}
\label{def:Beurling-space}Let $X$ be a pre-ordered Banach space
with a closed cone, and $\omega:G\to[0,\infty)$ a weight on $G$.
We define the weighted $1$-norm on $C_{c}(G,X)$ by 
\[
\|h\|_{1,\omega}:=\int_{G}\|h(s)\|\omega(s)\, ds\quad(h\in C_{c}(G,X)),
\]
and define the pre-ordered Banach space $L^{1}(G,X,\omega)$ as the
completion (in the sense of Definition \ref{def:completion-or-ordered-normed-space})
of the pre-ordered vector space $(C_{c}(G,X),C_{c}(G,X_{+}))$ with
the $\|\cdot\|_{1,\omega}$-norm.
\end{defn}
Given the prominent role of continuous compactly supported functions
in the theory, the definition of $L^{1}(G,X,\omega)$ as the completion
of the space $C_{c}(G,X)$ is clearly convenient. A drawback, however,
is that it is then not clear that $L^{1}(G,X,\omega)_{+}$, which
is, by definition, the closure of $C_{c}(G,X_{+})$, is generating
in $L^{1}(G,X,\omega)$ if $X_{+}$ is generating in $X$. From Corollary
\ref{thm:C(Omega,X+)-generating-if-X+-is-generating} we know that
$C_{c}(G,X_{+})$ is generating in $C_{c}(G,X)$, and then Lemma \ref{lem:generating-becomes-weakly-generating-in-completions}
yields that $L^{1}(G,X,\omega)_{+}$ is topologically generating in
$L^{1}(G,X,\omega)$, but generalities do not seem to help us beyond
this point. Similarly, it is not clear that $L^{1}(G,X,\omega)_{+}$
is a proper cone if $X_{+}$ is proper. To establish these results,
we use the fact that, as already observed in \cite[Remark 5.3]{CPII},
$L^{1}(G,X,\omega)$ is isometrically isomorphic to a Bochner space
(also if the left Haar measure $\mu$ is not $\sigma$-finite, or
$X$ is not separable). We recall the relevant facts. A function $f:G\to X$
is Bochner integrable (with respect to $\omega d\mu$) if $f^{-1}(B)$
is a Borel subset of $G$ for every Borel subset $B$ of $X$, $f(G)$
is separable, and $\int_{G}\|f(s)\|\omega(s)\: d\mu(s)<\infty$ (the
measurability of $s\mapsto\|f(s)\|$ is an automatic consequence of
the Borel measurability of $f$). On identifying Bochner integrable
functions that are equal $\omega d\mu$-almost everywhere, one obtains
a Banach space $L^{1}(G,\mathcal{B},\omega d\mu,X)$, where $\mathcal{B}$
is the Borel $\sigma$-algebra of $G$, and the norm is given by $\|[f]\|=\int_{G}\|f(s)\|\omega(s)\, d\mu(s)$,
with $f$ any representative of $[f]\in L^{1}(G,\mathcal{B},\omega d\mu,X)$.
Clearly the inclusion map of $(C_{c}(G,X),\|\cdot\|_{1,\omega})$
into $L^{1}(G,\mathcal{B},\omega d\mu,X)$ is isometric, and the existence
of the aforementioned isometric isomorphism between $L^{1}(G,X,\omega)$
and $L^{1}(G,\mathcal{B},\omega d\mu,X)$ is then established by showing
that $C_{c}(G,X)$ is dense in $L^{1}(G,\mathcal{B},\omega d\mu,X)$.
In the present context, if $X$ is a pre-ordered Banach space, then
$L^{1}(G,\mathcal{B},\omega d\mu,X)$ has a natural cone 
\[
L^{1}(G,\mathcal{B},\omega d\mu,X_{+}):=\{f\in L^{1}(G,\mathcal{B},\omega d\mu,X):f(s)\in X_{+}\textup{ for }\omega d\mu\textup{-a.a. }s\in G\},
\]
where, as usual we have ignored the distinction between equivalence
classes and functions. As in the scalar case, a convergent sequence
in $L^{1}(G,\mathcal{B},\omega d\mu,X)$ has a subsequence that converges
$\omega d\mu$-almost everywhere to the limit function. Hence if $X_{+}$
is closed, then so is $L^{1}(G,\mathcal{B},\omega d\mu,X_{+})$. We
then have the following natural result.
\begin{prop}
\label{prop:Beurlin-and-Bochner-cones-are-the-same}Let $X$ be a
pre-ordered Banach space with a closed cone. Let $G$ be a locally
compact group and $\omega$ a weight on $G$. Then:
\begin{enumerate}
\item The cone $C_{c}(G,X_{+})$ is dense in the closed cone $L^{1}(G,\mathcal{B},\omega d\mu,X_{+})$.
\item $(L^{1}(G,X,\omega),L^{1}(G,X,\omega)_{+})$ and $(L^{1}(G,\mathcal{B},\omega d\mu,X),L^{1}(G,\mathcal{B},\omega d\mu,X_{+}))$
are pre-ordered Banach spaces with closed cones that are bipositively
isometrically isomorphic through an isomorphism that is the identity
on $C_{c}(G,X)$.
\end{enumerate}
\end{prop}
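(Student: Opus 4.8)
The isometric isomorphism $\Phi\colon L^{1}(G,X,\omega)\to L^{1}(G,\mathcal{B},\omega d\mu,X)$ that restricts to the identity on $C_{c}(G,X)$ has already been constructed in the discussion preceding the proposition, as has the fact that $L^{1}(G,\mathcal{B},\omega d\mu,X_{+})$ is a closed cone. Granting these, part (2) is a formal consequence of part (1). By Definitions \ref{def:completion-or-ordered-normed-space} and \ref{def:Beurling-space}, the cone $L^{1}(G,X,\omega)_{+}$ is the closure of $C_{c}(G,X_{+})$ in $L^{1}(G,X,\omega)$. Since $\Phi$ is a homeomorphism fixing $C_{c}(G,X)$ pointwise, it maps this closure onto the closure of $C_{c}(G,X_{+})$ in $L^{1}(G,\mathcal{B},\omega d\mu,X)$; and part (1) identifies the latter closure with $L^{1}(G,\mathcal{B},\omega d\mu,X_{+})$. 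Thus $\Phi$ carries the one cone exactly onto the other, hence is bipositive, proving (2). The whole proposition therefore reduces to part (1), which is where the real work lies.

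To prove (1) the plan is to approximate a given $f\in L^{1}(G,\mathcal{B},\omega d\mu,X_{+})$ in the $\|\cdot\|_{1,\omega}$-norm by elements of $C_{c}(G,X_{+})$ in two stages, taking care that positivity is preserved at each step. The first stage reduces $f$ to positive simple functions. Replacing $f$ by $f\cdot\chi_{\{\delta\leq\|f\|\leq n\}}$ and letting $\delta\downarrow0$ and $n\uparrow\infty$, dominated convergence (with dominating function $\|f\|\,\omega$) shows these truncations converge to $f$ in $\|\cdot\|_{1,\omega}$; each is bounded, supported on a set of finite $\omega d\mu$-measure (by Chebyshev's inequality), and still takes values in $X_{+}$, being equal to $f$ or $0$. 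For such a bounded, finitely-supported, $X_{+}$-valued function $h$ I would then discretize its (separable, bounded) range: choosing a countable dense subset $\{y_{k}\}\subseteq X_{+}$ of the range and an associated disjoint Borel partition $\{B_{k}\}$ of that range into pieces of diameter below a prescribed $\eta$, the function $\sum_{k}\chi_{G_{k}}y_{k}$, where $G_{k}$ is the preimage $h^{-1}(B_{k})$, lies within $\eta$ of $h$ pointwise and takes values in $X_{+}$. Boundedness of the range and finiteness of the measure of the support make this function $\omega d\mu$-integrable, so truncating to finitely many indices $k$ yields a genuine $X_{+}$-valued simple function approximating $f$.

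The second stage approximates a positive simple function, and by the triangle inequality it suffices to treat a single term $\chi_{E}\otimes x$ with $x\in X_{+}$ and $E$ of finite $\omega d\mu$-measure. Here I would use that $\omega d\mu$ is a Radon measure --- $\omega$ is Borel and bounded on compact sets, so $\omega d\mu$ is locally finite and regular --- to find, for any $\varepsilon>0$, a compact $K\subseteq E$ and a relatively compact open $U\supseteq K$ with $(\omega d\mu)(E\setminus K)$ and $(\omega d\mu)(U\setminus K)$ both less than $\varepsilon$, followed by an Urysohn function $\phi\in C_{c}(G)$ with $0\leq\phi\leq1$, $\phi|_{K}=1$ and $\textup{supp}\,\phi\subseteq U$. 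Then $\phi\otimes x\colon s\mapsto\phi(s)x$ lies in $C_{c}(G,X_{+})$, since $\phi(s)\geq0$ and $x\in X_{+}$, while $\|\phi\otimes x-\chi_{E}\otimes x\|_{1,\omega}=\|x\|\int_{G}|\phi-\chi_{E}|\,\omega\,d\mu\leq2\varepsilon\|x\|$. Summing such approximants over the finitely many terms of a positive simple function places it, and hence $f$, in the $\|\cdot\|_{1,\omega}$-closure of $C_{c}(G,X_{+})$, which completes (1).

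The main obstacle is the first stage. The standard density of simple functions in a Bochner space produces approximants that need not take values in the cone $X_{+}$, since the usual construction (via the Pettis measurability theorem) only controls their norms, not their order. The device that overcomes this is the range-discretization above, which forces the approximating values to lie in $X_{+}$ by selecting them from a dense subset of the range of $f$; the truncation steps serve only to guarantee integrability of the resulting countably-valued function, so that it may be replaced by a finitely-valued one without losing membership in the cone. Once positivity is secured at the level of simple functions, the passage to $C_{c}(G,X_{+})$ is routine regularity theory combined with the elementary observation that $\phi\otimes x$ is positive whenever $\phi\geq0$ and $x\in X_{+}$.
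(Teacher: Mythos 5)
Your proposal is correct, and its skeleton matches the paper's: part (2) is deduced from part (1) exactly as in the paper (by Definitions \ref{def:completion-or-ordered-normed-space} and \ref{def:Beurling-space} the cone $L^{1}(G,X,\omega)_{+}$ is the closure of $C_{c}(G,X_{+})$, and the recalled isometric isomorphism fixes $C_{c}(G,X)$, hence carries that closure onto the closure identified in (1)); and part (1) is proved in both cases by first showing that $X_{+}$-valued simple functions are dense in $L^{1}(G,\mathcal{B},\omega d\mu,X_{+})$ and then approximating a single term $\chi_{E}\otimes x$, $x\in X_{+}$, by elements of $C_{c}(G,X_{+})$. The difference lies in how the two halves of (1) are carried out. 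For the simple-function step the paper inspects the proof of Cohn's Proposition E.2 and observes that the approximants there may be taken with values in $\mathbb{Q}_{\geq0}f(G)\subseteq X_{+}$; you rebuild that argument by hand via truncation above and below followed by discretization of the separable range with representatives chosen inside $X_{+}$. For the step from $\chi_{E}\otimes x$ to $C_{c}(G,X_{+})$, the paper approximates $\chi_{E}$ by $g_{n}\in C_{c}(G)$ in the Beurling algebra $L^{1}(G,\omega)$ (citing density of $C_{c}(G)$ there) and then passes to the positive parts $g_{n}^{+}$, using $|g_{n}^{+}-\chi_{E}|\leq|g_{n}-\chi_{E}|$; you instead argue directly from inner/outer regularity of $\omega\,d\mu$ and Urysohn's lemma. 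Your route is self-contained where the paper outsources to two citations; the paper's is shorter and its $g_{n}^{+}$ trick is slicker than re-proving regularity.

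Two details in your version need more care, though both are easily repaired. First, the assertion that $\omega\,d\mu$ is Radon is not automatic from local finiteness alone when $G$ fails to be $\sigma$-compact: the inner regularity you invoke should be justified by noting that $\omega>0$ everywhere (submultiplicativity together with $\omega\not\equiv0$), so that any Borel set of finite $\omega\,d\mu$-measure is $\mu$-$\sigma$-finite, whence inner regularity of the Haar measure on $\sigma$-finite sets, combined with the boundedness of $\omega$ on compact sets, yields the compact $K\subseteq E$ and the relatively compact open $U\supseteq K$ with the required measure estimates. Second, in the range discretization, the piece of the partition containing $0$ must receive the representative $0$ (for instance, take $\eta<\delta$, which your lower truncation makes possible); otherwise $\sum_{k}\chi_{G_{k}}y_{k}$ could equal a nonzero constant on the set where $h$ vanishes, which may have infinite $\omega\,d\mu$-measure, and the claimed integrability and finite support would fail. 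With these two points made explicit, your argument is complete.
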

\begin{proof}
For the first part we need, in view of the remarks preceding the proposition,
only show that $C_{c}(G,X_{+})$ is dense in $L^{1}(G,\mathcal{B},\omega d\mu,X_{+})$.
If $f\in L^{1}(G,\mathcal{B},\omega d\mu,X)$, then the proof of \cite[Proposition E.2]{Cohn}
shows that there exists a subset $S$ of $\mathbb{Q}f(G)$ and a sequence
of of simple functions $(f_{n})$, with values in $S$, such that
$f_{n}(s)\to f(s)$ and $\|f_{n}(s)\|\leq\|f(s)\|$ for $\omega d\mu$-almost
every $s\in G$. Hence by the dominated convergence theorem (see the
argument on \cite[p.\,352]{Cohn}) $f_{n}\to f$. An inspection of
the proof of \cite[Proposition E.2]{Cohn} shows that, in fact, $S$
can be chosen to be a subset of $\mathbb{Q}_{\geq0}f(G)$. It is then
clear that the (equivalence classes of) $X_{+}$-valued simple functions
are dense in $L^{1}(G,\mathcal{B},\omega d\mu,X_{+})$. Therefore,
it is sufficient to show that the functions of the form $\chi_{B}\otimes x\in L^{1}(G,\mathcal{B},\omega d\mu,X_{+})$,
where $B\in\mathcal{B}$ and $x\in X_{+}$, can be approximated arbitrarily
closely by elements of $C_{c}(G,X_{+})$. As to this, since $C_{c}(G)$
is dense in the Beurling algebra $L^{1}(G,\omega)$ \cite[Lemma 1.3.5]{Kaniuth},
there exists a sequence $(g_{n})\subseteq C_{c}(G)$ such that $g_{n}\to\chi_{B}$
in $L^{1}(G,\omega)$. Since $\chi_{B}\geq0$ we clearly have $\|g_{n}^{+}\otimes x-\chi_{B}\otimes x\|_{1,\omega}\leq\|g_{n}-\chi_{B}\|_{1,\omega}\|x\|\to0$.
Hence $g_{n}^{+}\otimes x\to\chi_{B}\otimes x$, and the proof is
complete.

The second part is immediate from the first.
\end{proof}
We can now settle the matters mentioned above.
\begin{thm}
\label{thm:bochner-cones-properties}Let $X$ be a pre-ordered Banach
space with a closed cone. Let $G$ be a locally compact group and
$\omega$ a weight on $G$.
\begin{enumerate}
\item If $X_{+}$ is generating in $X$, then the closed cone $L^{1}(G,X,\omega)_{+}$
is generating in $L^{1}(G,X,\omega)$.
\item If $X_{+}$ is a proper cone, then the closed cone $L^{1}(G,X,\omega)_{+}$
is proper.
\end{enumerate}
\end{thm}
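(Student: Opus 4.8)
The plan is to prove both statements by transporting the problem to the Bochner space realization. By Proposition \ref{prop:Beurlin-and-Bochner-cones-are-the-same}(2), $(L^{1}(G,X,\omega),L^{1}(G,X,\omega)_{+})$ is bipositively isometrically isomorphic to $(L^{1}(G,\mathcal{B},\omega d\mu,X),L^{1}(G,\mathcal{B},\omega d\mu,X_{+}))$, and the latter cone has the convenient pointwise description $\{f:f(s)\in X_{+}\text{ for }\omega d\mu\text{-a.a.\,}s\}$. Since bipositive isomorphisms preserve both properness and the property of being generating, it suffices to prove the two assertions for the Bochner space.

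Part (2) is then immediate. If $f\in L^{1}(G,\mathcal{B},\omega d\mu,X_{+})\cap(-L^{1}(G,\mathcal{B},\omega d\mu,X_{+}))$, then for $\omega d\mu$-almost every $s$ we have both $f(s)\in X_{+}$ and $-f(s)\in X_{+}$, so $f(s)\in X_{+}\cap(-X_{+})=\{0\}$ by properness of $X_{+}$. Hence $f=0$ almost everywhere, i.e., $f=0$ in the Bochner space, and the cone is proper.

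For part (1), the idea is to decompose a Bochner integrable function pointwise using the continuous decomposition furnished by Theorem \ref{thm:upper-bound-function}. Since $X_{+}$ is closed and generating, that theorem provides a constant $\alpha>0$ and continuous positively homogeneous maps $(\cdot)^{\pm}:X\to X_{+}$ with $x=x^{+}-x^{-}$ and $\|x^{+}\|+\|x^{-}\|\leq\alpha\|x\|$ for all $x\in X$. Given $f\in L^{1}(G,\mathcal{B},\omega d\mu,X)$, I would set $f^{\pm}:=(\cdot)^{\pm}\circ f$ and verify that each $f^{\pm}$ again lies in the Bochner space: Borel measurability of $f^{\pm}$ follows since a continuous map composed with a Borel measurable map is Borel measurable; separability of the range follows since $f^{\pm}(G)=(\cdot)^{\pm}(f(G))$ is the continuous image of the separable set $f(G)$; and Bochner integrability follows from the pointwise estimate $\|f^{\pm}(s)\|\leq\alpha\|f(s)\|$, which gives $\int_{G}\|f^{\pm}(s)\|\omega(s)\,d\mu(s)\leq\alpha\int_{G}\|f(s)\|\omega(s)\,d\mu(s)<\infty$. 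As $f^{\pm}(s)\in X_{+}$ for every $s$, we have $f^{\pm}\in L^{1}(G,\mathcal{B},\omega d\mu,X_{+})$, and $f=f^{+}-f^{-}$ holds pointwise, hence in the Bochner space. This exhibits $f$ as a difference of positive elements, so the cone is generating.

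The only delicate point, and hence the main thing to be careful about, is the measurability and separable-range verification for the composed functions $f^{\pm}$; this is precisely where the \emph{continuity} (not merely measurability) of the decomposition maps from Theorem \ref{thm:upper-bound-function} is essential, while their norm bound handles integrability. Everything else is routine once the problem has been moved into the Bochner realization via Proposition \ref{prop:Beurlin-and-Bochner-cones-are-the-same}.
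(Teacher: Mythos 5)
Your proposal is correct and follows essentially the same route as the paper's proof: transport both statements to the Bochner space via Proposition \ref{prop:Beurlin-and-Bochner-cones-are-the-same}, dispose of properness pointwise, and for the generating property compose $f$ with the continuous positively homogeneous decomposition maps of Theorem \ref{thm:upper-bound-function}, checking Borel measurability, separability of the range, and the integrability bound exactly as the paper does. The only difference is that you spell out the (genuinely immediate) properness argument and flag the role of continuity explicitly, which the paper leaves implicit.
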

\begin{proof}
In view of Proposition \ref{prop:Beurlin-and-Bochner-cones-are-the-same},
it is equivalent to prove the statements for the closed cone $L^{1}(G,\mathcal{B},\omega d\mu,X_{+})$
of $L^{1}(G,\mathcal{B},\omega d\mu,X)$. Part (2) is then immediate.
As to part (1), by Theorem \ref{thm:upper-bound-function}, if $X_{+}$
is generating in $X$ there exist continuous positively homogeneous
functions $(\cdot)^{\pm}:X\to X_{+}$ and a constant $\alpha>0$ such
that $x=x^{+}-x^{-}$ and $\|x^{\pm}\|\leq\alpha\|x\|$ for all $x\in X$.
If $f\in L^{1}(G,\mathcal{B},\omega d\mu,X)$, we define $f^{\pm}(s):=(f(s))^{\pm}$
for all $s\in G$. Since the functions $(\cdot)^{\pm}:X\to X_{+}$
are continuous, the measurability of $f$ implies the measurability
of $f^{\pm}$, and the separability of $f(G)$ implies the separability
of $f^{\pm}(G)$. The inequalities $\|x^{\pm}\|\leq\alpha\|x\|\ (x\in X)$
imply $\|f^{\pm}\|_{1,\omega}\leq\alpha\|f\|_{1,\omega}<\infty$.
We conclude that $f^{\pm}\in L^{1}(G,\mathcal{B},\omega d\mu,X_{+})$.
Since $f=f^{+}-f^{-}$, the cone $L^{1}(G,\mathcal{B},\omega d\mu,X_{+})$
is generating in $L^{1}(G,\mathcal{B},\omega d\mu,X)$.
\end{proof}
Thus, in particular, if $(A,G,\alpha)$ is a pre-ordered Banach algebra
dynamical system with $A$ having a closed cone, then $L^{1}(G,A,\omega)_{+}$
is generating (proper) in $L^{1}(G,A,\omega)$ if $A_{+}$ is generating
(proper) in $A$. 

We now turn to the definition of the multiplicative structure on $L^{1}(G,A,\omega)$
if $\alpha$ is uniformly bounded. Let $(A,G,\alpha)$ be a pre-ordered
Banach algebra dynamical system, with $A$ having a closed cone, and
$\omega$ a weight on $G$. If $\alpha$ is uniformly bounded, say
$\|\alpha_{s}\|\leq C_{\alpha}$ for some $C_{\alpha}\geq0$ and all
$s\in G$, then, using the submultiplicativity of $\omega$, it is
routine to verify that 
\[
\|f*g\|_{1,\omega}\leq C_{\alpha}\|f\|_{1,\omega}\|g\|_{1,\omega}\quad(f,g\in C_{c}(G,A)).
\]
Since $C_{c}(G,A)$ is a pre-ordered algebra by Lemma \ref{lemma:twisted-convolution-positive},
it is now clear that the pre-ordered Banach space $L^{1}(G,A,\omega)$
can be supplied with the structure of a pre-ordered algebra with continuous
multiplication. If $C_{\alpha}=1$ (i.e., if $\alpha$ lets $G$ act
as bipositive isometries on $A$), then $L^{1}(G,A,\omega)$ is a
pre-ordered Banach algebra. When $C_{\alpha}\neq1$, as is well known,
there is an equivalent norm on $L^{1}(G,A,\omega)$ such that it becomes
a Banach algebra, which is a pre-ordered Banach algebra when endowed
with the same cone $L^{1}(G,A,\omega)_{+}$. In \cite[Theorem 5.17]{CPII}
it was shown that such a Banach algebra norm can be obtained from
a topological isomorphism between $L^{1}(G,A,\omega)$ and the crossed
product $\crossedprod$ for a suitable choice of $\mathcal{R}$. In
Theorem \ref{thm:Choosing-R-correctly-Crossed-Products-are-beurling}
below, we show that in the ordered context this topological isomorphism
is bipositive.
\begin{defn}
\label{def:generalized-Beurling}Let $(A,G,\alpha)$ be a pre-ordered
Banach algebra dynamical system, with $A$ having a closed cone and
$\alpha$ uniformly bounded. Let $\omega$ be a weight on $G$. The
pre-ordered Banach space $L^{1}(G,A,\omega)$ endowed with the continuous
multiplication induced by the twisted convolution on $C_{c}(G,A)$,
given by 
\[
[f*g](s):=\int_{G}f(r)\alpha_{r}(g(r^{-1}s))\, dr\quad(f,g\in C_{c}(G,A),\ s\in G),
\]
will be denoted by $\BeurlingTypeAlg$ and called a \emph{pre-ordered
generalized Beurling algebra.}
\end{defn}
{}

We note that if $A=\mathbb{R}$, the pre-ordered generalized Beurling
algebra $\BeurlingTypeAlg$ reduces to a classical Beurling algebra,
which is a true Banach algebra.

Let $(A,G,\alpha)$ be a pre-ordered Banach algebra dynamical system,
with $A$ having a closed cone. The following definition shows how
to induce a continuous covariant representation of $(A,G,\alpha)$
from a positive bounded representation of $A$. Applying this construction
to the left regular representation of $A$, and choosing (for instance)
$\mathcal{R}$ to be the singleton containing this continuous covariant
representation, yields the desired topological isomorphism (cf.\,\cite[Theorem 5.13]{CPII}).
We keep track of possible order structures in order to show later
that this topological isomorphism is bipositive.
\begin{defn}
\label{def:induced_rep_and_translation_rep}Let $(A,G,\alpha)$ be
a pre-ordered Banach algebra dynamical system, with $A$ having a
closed cone, and let $\pi:A\to B(X)$ be a positive bounded representation
of $A$ on a pre-ordered Banach space $X$ with a closed cone. We
define the induced algebra representation $\tilde{\pi}$ and left
regular group representation $\Lambda$ on the space $X^{G}$ of all
functions from $G$ to $X$ by the formulae:
\begin{eqnarray*}
[\tilde{\pi}(a)h](s) & := & \pi(\alpha_{s}^{-1}(a))h(s),\\
(\Lambda_{r}h)(s) & := & h(r^{-1}s),
\end{eqnarray*}
where $h:G\to X$, $r,s\in G$ and $a\in A$. 
\end{defn}
It is easy to see that $(\tilde{\pi},\Lambda)$ is covariant, and
positive if $X^{G}$ is endowed with the cone $X_{+}^{G}$. If $\alpha$
is uniformly bounded, then $(\tilde{\pi},\Lambda)$ yields a continuous
covariant representation of $A$ on $L^{1}(G,X,\omega)$ such that
$\|\Lambda_{r}\|\leq\omega(r)$ for all $r\in G$, and if $\pi$ is
non-degenerate, so is $(\tilde{\pi},\Lambda)$ \cite[Corollary 5.9]{CPII}.
Hence, if $A$ has a bounded approximate left or right identity, then,
with $\lambda:A\to B(A)$ denoting the left regular representation
of $A$, $(\tilde{\lambda},\Lambda)$ is a positive non-degenerate
continuous covariant representation of $(A,G,\alpha)$ on $L^{1}(G,A,\omega)$. 

Let $(A,G,\alpha)$ be a pre-ordered Banach algebra dynamical system,
with $A$ having a closed cone and a (not necessarily positive) bounded
approximate right identity. Let $\omega$ be a weight on $G$ and
$\mathcal{R}$ a uniformly bounded class of non-degenerate continuous
covariant representations of $(A,G,\alpha)$ on Banach spaces, such
that $\sup_{(\pi,U)\in\mathcal{R}}\|U_{r}\|\leq\omega(r)$ for all
$r\in G$. If $(\tilde{\lambda},\Lambda)$ is $\mathcal{R}$-continuous,
for instance if $\mathcal{R}=\{(\tilde{\lambda},\Lambda)\}$, then
the integrated form $\tilde{\lambda}\rtimes\Lambda:C_{c}(G,A)\to B(L^{1}(G,A,\omega))$
is faithful, and hence the seminorm $\sigma^{\mathcal{R}}$ is actually
a norm on $C_{c}(G,A)$ and is equivalent to $\|\cdot\|_{1,\omega}$
\cite[Theorem 5.13]{CPII}. Furthermore, $\tilde{\lambda}\rtimes\Lambda$
extends to a topological embedding $(\tilde{\lambda}\rtimes\Lambda)^{\mathcal{R}}:\crossedprod\to B(L^{1}(G,A,\omega))$
\cite[Theorem 5.13]{CPII}. Since the norms $\sigma^{\mathcal{R}}$
and $\|\cdot\|_{1,\omega}$ are equivalent, the topological isomorphism
between $\crossedprod$ and $\BeurlingTypeAlg$ which is the identity
on the mutual dense subspace $C_{c}(G,A)$ is bipositive by construction,
as the cones of both spaces are the closure of $C_{c}(G,A)$. Since
the non-degenerate $\mathcal{R}$-continuous covariant representation
$(\tilde{\lambda},\Lambda)$ is positive, so is the topological embedding
$(\tilde{\lambda}\rtimes\Lambda)^{\mathcal{R}}:\crossedprod\to B(L^{1}(G,A,\omega))$
by Lemmas \ref{lem:integrated-froms-positive} and \ref{lem:bounded-positive-operator-on-ordered-noremed-space-implies-extension-is-positive}. 

Under the assumption that, in fact, $A$ has a positive bounded approximate right
identity, we claim that the positive topological embedding $(\tilde{\lambda}\rtimes\Lambda)^{\mathcal{R}}:\crossedprod\to B(L^{1}(G,A,\omega))$
is bipositive. Identifying $\crossedprod$ with $\BeurlingTypeAlg$
through the above bipositive topological isomorphism, the topological
embedding $(\tilde{\lambda}\rtimes\Lambda)^{\mathcal{R}}$ is conjugate
to the left regular representation $\lambda:\BeurlingTypeAlg\to B(\BeurlingTypeAlg)$
through the bipositive map $\hat{\cdot}:\BeurlingTypeAlg\to\BeurlingTypeAlg$,
determined by $\hat{h}(s):=\alpha_{s}(h(s))$ for $h\in C_{c}(G,A)$
and $s\in G$ \cite[Remark 5.16]{CPII}. We denote the inverse of
$\hat{\cdot}$ by $\check{\cdot}$. With $(u_{i})\subseteq\BeurlingTypeAlg_{+}$
a positive approximate right identity (which exists by Proposition
\ref{prop:ordered-crossed-prod-has-positive-bounded-approx-id} and
the fact that $\crossedprod$ is bipositively topologically isomorphic
to $\BeurlingTypeAlg$, as described above), if $f\in\BeurlingTypeAlg$
is such that $(\tilde{\lambda}\rtimes\Lambda)^{\mathcal{R}}(f)\geq0$,
then 
\[
0\leq\left((\tilde{\lambda}\rtimes\Lambda)^{\mathcal{R}}(f)\check{u_{i}}\right)^{\wedge}=\lambda(f)u_{i}=f*u_{i}\to f.
\]
Since $\BeurlingTypeAlg_{+}$ is closed by construction, we obtain
$f\in\BeurlingTypeAlg_{+}$, and therefore the claim that $(\tilde{\lambda}\rtimes\Lambda)^{\mathcal{R}}:\crossedprod\to B(L^{1}(G,A,\omega))$
is a bipositive topological embedding follows.

We hence obtain the following ordered version of \cite[Theorem 5.17]{CPII}:
\begin{thm}
\label{thm:Choosing-R-correctly-Crossed-Products-are-beurling}Let
$(A,G,\alpha)$ be a pre-ordered Banach algebra dynamical system,
with $A$ having a closed cone and a \textup{(}not necessarily positive\textup{)}
bounded approximate right identity. Let $\alpha$ be uniformly bounded
and $\omega$ be a weight on $G$. Let the positive non-degenerate
continuous covariant representation $(\tilde{\lambda},\Lambda)$ of
$(A,G,\alpha)$ on $L^{1}(G,A,\omega)$ be as yielded by Definition
\ref{def:induced_rep_and_translation_rep}\textup{.} Then the pre-ordered
generalized Beurling algebra $\BeurlingTypeAlg$ and the pre-ordered
crossed product $\crossedprod$ with $\mathcal{R}:=\{(\tilde{\lambda},\Lambda)\}$
are bipositively topologically isomorphic via an isomorphism that
is the identity on $C_{c}(G,A)$.

Furthermore, the map $\tilde{\lambda}\rtimes\Lambda:C_{c}(G,A)\to B(L^{1}(G,A,\omega))$
extends to a positive topological embedding of $\crossedprod$ into
$B(L^{1}(G,A,\omega))$, and this extension is bipositive if $A$
has a positive bounded approximate right identity.

If $A$ has a $1$-bounded right approximate identity, $\alpha$ lets
$G$ act as isometries on $A$ and \textup{$\inf_{W\in\mathcal{Z}}\sup_{r\in W}\omega(r)=1$,
}with $\mathcal{Z}$ denoting a neighbourhood base of $e\in G$ of
which all elements are contained in a fixed compact set\textup{,}
then the bipositive topological isomorphism between $\crossedprod$
and $\BeurlingTypeAlg$ and the above positive embedding of $\crossedprod$
into $B(L^{1}(G,A,\omega))$ are both isometric. 
\end{thm}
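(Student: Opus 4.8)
The plan is to read off every norm-level assertion from the unordered predecessor \cite[Theorem 5.17]{CPII} and to supply only the order-theoretic content, nearly all of which has already been assembled in the discussion preceding the statement. Concretely, since $(\tilde{\lambda},\Lambda)$ is $\mathcal{R}$-continuous for $\mathcal{R}=\{(\tilde{\lambda},\Lambda)\}$, \cite[Theorem 5.13]{CPII} yields that $\sigma^{\mathcal{R}}$ is a norm equivalent to $\|\cdot\|_{1,\omega}$ on $C_c(G,A)$, so the identity map on $C_c(G,A)$ extends to a topological isomorphism $\crossedprod\to\BeurlingTypeAlg$. Because the cones of both spaces are, by construction, the closures of the common cone $C_c(G,A_+)$, and the two norms are equivalent, this isomorphism carries one cone onto the other and is therefore bipositive; this settles the first assertion.

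For the embedding I would first establish positivity: $(\tilde{\lambda},\Lambda)$ is a positive continuous covariant representation, so its integrated form is positive on $C_c(G,A)$ by Lemma \ref{lem:integrated-froms-positive}, and since the cone of $\crossedprod$ is closed, Lemma \ref{lem:bounded-positive-operator-on-ordered-noremed-space-implies-extension-is-positive} promotes this to positivity of $(\tilde{\lambda}\rtimes\Lambda)^{\mathcal{R}}$ on all of $\crossedprod$. For bipositivity under a positive bounded approximate right identity I would use the identification, recorded before the theorem, of the embedding with the left regular representation $\lambda$ of $\BeurlingTypeAlg$ via the bipositive map $\hat{\cdot}$: given $f$ with $(\tilde{\lambda}\rtimes\Lambda)^{\mathcal{R}}(f)\geq 0$, a positive approximate right identity $(u_i)\subseteq\BeurlingTypeAlg_+$ (transported from $\crossedprod$ by Proposition \ref{prop:ordered-crossed-prod-has-positive-bounded-approx-id} across the bipositive isomorphism) yields $f*u_i\to f$ with every $f*u_i$ positive, and closedness of $\BeurlingTypeAlg_+$ forces $f\geq 0$. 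This is precisely the displayed computation above, so no new work is needed here.

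For the last, isometric assertion I would observe that its three hypotheses --- a $1$-bounded right approximate identity on $A$, $\alpha$ acting as isometries, and $\inf_{W}\sup_{r\in W}\omega(r)=1$ --- are purely metric, so the isometry of both maps is exactly the isometric part of \cite[Theorem 5.17]{CPII} and needs no order input. Were I to reprove it directly, I would note that $\alpha$ acting isometrically makes $\hat{\cdot}$ a $\|\cdot\|_{1,\omega}$-isometry, reducing the isometry of the embedding to that of $\lambda\colon\BeurlingTypeAlg\to B(\BeurlingTypeAlg)$, while the remaining two hypotheses produce a $1$-bounded right approximate identity on $\BeurlingTypeAlg$ (of the form $z_V\otimes u_i$), which forces $\|\lambda(f)\|=\|f\|_{1,\omega}$.

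Since every genuinely new step has already been carried out in the paragraphs preceding the statement, I anticipate no real obstacle. The one point demanding care is the bipositivity of the embedding: one must correctly transport the positive approximate right identity along both $\hat{\cdot}$ and the crossed-product isomorphism, and then invoke closedness of the target cone to pass the limit $f*u_i\to f$ into $\BeurlingTypeAlg_+$.
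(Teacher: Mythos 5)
Your proposal is correct and takes essentially the same route as the paper: the paper also obtains the bipositive isomorphism from the norm equivalence in \cite[Theorem 5.13]{CPII} together with the observation that both cones are the closure of $C_{c}(G,A_{+})$ in equivalent norms, gets positivity of $(\tilde{\lambda}\rtimes\Lambda)^{\mathcal{R}}$ from Lemmas \ref{lem:integrated-froms-positive} and \ref{lem:bounded-positive-operator-on-ordered-noremed-space-implies-extension-is-positive}, and proves bipositivity by exactly the $\hat{\cdot}$-conjugation argument with a positive bounded approximate right identity and closedness of $\BeurlingTypeAlg_{+}$. The isometric assertions are, as you say, inherited unchanged from the unordered \cite[Theorem 5.17]{CPII}.
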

{}

The following result gives some properties of the cones of pre-ordered
generalized Beurling algebras. Here application of Theorem \ref{thm:bochner-cones-properties}
yields stronger conclusions on the structure of the cone $\BeurlingTypeAlg_{+}$
than can be concluded from the more generally applicable Theorem \ref{thm:ordered-crossed-prod-properties}.
\begin{thm}
\label{thm:order-structures-of-beurling}Let $(A,G,\alpha)$ be a
pre-ordered Banach algebra dynamical system, with $A$ having a closed
cone and a \textup{(}not necessarily positive\textup{)} bounded
approximate right identity. Let $\alpha$ be uniformly bounded and
$\omega$ be a weight on $G$.

If the cone $A_{+}$ is generating \textup{(}proper\textup{)} in
$A$, then the cone $\BeurlingTypeAlg_{+}$ is generating \textup{(}proper\textup{)}
in $\BeurlingTypeAlg$. 

Furthermore, if $A$ is a Banach lattice algebra, then the pre-ordered
generalized Beurling algebra $\BeurlingTypeAlg$, viewed as pre-ordered
Banach space, is a Banach lattice. If, in addition, $\alpha$ lets
$G$ act as bipositive isometries on $A$, the pre-ordered generalized
Beurling algebra $\BeurlingTypeAlg$ is a Banach lattice algebra.\end{thm}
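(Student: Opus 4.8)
The plan is to treat the three assertions separately, the first and third being short consequences of material already established and the second carrying the only real content.

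For the generating/proper statement, I would simply observe that $\BeurlingTypeAlg$, regarded merely as a pre-ordered Banach space, is by Definition \ref{def:generalized-Beurling} precisely $L^{1}(G,A,\omega)$. Thus the claim is exactly the specialization $X=A$ of Theorem \ref{thm:bochner-cones-properties}, as already remarked immediately after that theorem, and no further argument is required.

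For the Banach lattice statement I would avoid the completion and instead exhibit $(C_{c}(G,A),\|\cdot\|_{1,\omega})$ as a \emph{normed} vector lattice, so that its completion $\BeurlingTypeAlg$ is automatically a Banach lattice by \cite[Corollary 2, p.\,84]{Schaefer}, the very result quoted in the Remark following Lemma \ref{lem:generating-with-continuous-pos-part-function-becomes-generating-in-completions}. To see that $C_{c}(G,A)$ is a vector lattice under the cone $C_{c}(G,A_{+})$, I would set $(f\vee g)(s):=f(s)\vee g(s)$; since $A$ is a Banach lattice the operation $\vee\colon A\times A\to A$ is continuous, so $f\vee g$ is again continuous, and as $f(s)\vee g(s)=0$ wherever $f(s)=g(s)=0$ its support lies in $\textup{supp}(f)\cup\textup{supp}(g)$, keeping it in $C_{c}(G,A)$. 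A routine verification shows this pointwise supremum is the least upper bound for the order induced by $C_{c}(G,A_{+})$, which is itself the pointwise order. That $\|\cdot\|_{1,\omega}$ is a lattice norm is immediate: if $|f|\leq|g|$ pointwise, then $\|f(s)\|\leq\|g(s)\|$ for every $s$ because $A$ is a Banach lattice, and integrating against $\omega\,d\mu$ gives $\|f\|_{1,\omega}\leq\|g\|_{1,\omega}$.

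For the final statement, I would note that when $\alpha$ lets $G$ act as bipositive isometries one has $C_{\alpha}=1$, so the estimate $\|f*g\|_{1,\omega}\leq C_{\alpha}\|f\|_{1,\omega}\|g\|_{1,\omega}$ recorded just before Definition \ref{def:generalized-Beurling} shows that $\|\cdot\|_{1,\omega}$ is submultiplicative; together with the fact that $\BeurlingTypeAlg_{+}$ is closed under twisted convolution (Lemma \ref{lemma:twisted-convolution-positive}, continuity of the multiplication, and the density of $C_{c}(G,A_{+})$ in $\BeurlingTypeAlg_{+}$), this makes $\BeurlingTypeAlg$ a pre-ordered Banach algebra. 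Being simultaneously a Banach lattice by the previous paragraph, it is a Banach lattice algebra by definition. The only point needing care is in the second paragraph: one must confirm that the abstract completion of Definition \ref{def:completion-or-ordered-normed-space}, carrying the closed cone $\overline{C_{c}(G,A_{+})}=\BeurlingTypeAlg_{+}$, coincides as an ordered Banach space with the Banach lattice completion furnished by Schaefer's theorem. This is automatic, since the Banach lattice cone of a completion of a normed lattice is precisely the closure of the original cone (if $v_{n}\to x$ with $x\geq0$ then $v_{n}^{+}=v_{n}\vee 0\to x\vee 0=x$ by continuity of $\vee$, so $x\in\overline{C_{c}(G,A_{+})}$); but it is worth stating explicitly so that the lattice structure transported from $C_{c}(G,A)$ is seen to be compatible with the cone $\BeurlingTypeAlg_{+}$ used throughout.
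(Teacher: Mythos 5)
Your proposal is correct and follows essentially the same route as the paper: the generating/proper claims via Theorem \ref{thm:bochner-cones-properties} with $X=A$, the Banach lattice claim by exhibiting $(C_{c}(G,A),C_{c}(G,A_{+}),\|\cdot\|_{1,\omega})$ as a normed vector lattice and invoking \cite[Corollary 2, p.\,84]{Schaefer}, and the algebra claim from $C_{\alpha}=1$ together with Lemma \ref{lem:completion-of-normed-algebra-isbanach-algebra}. You merely fill in details the paper leaves implicit (the pointwise-lattice verification and the compatibility of the completed lattice cone with $\overline{C_{c}(G,A_{+})}$), both of which are sound.
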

\begin{proof}
The conclusions on $\BeurlingTypeAlg_{+}$ being generating or proper
follow immediately from Theorem \ref{thm:bochner-cones-properties}.

If $A$ is a Banach lattice algebra, then $(C_{c}(G,A),C_{c}(G,A_{+}))$
with the norm $\|\cdot\|_{1,\omega}$ is a normed vector lattice.
Therefore, by \cite[Corollary 2, p.\,84]{Schaefer}, $\BeurlingTypeAlg$
is a Banach lattice. If $\alpha$ lets $G$ act as bipositive isometries
on $A$, then $\BeurlingTypeAlg$ is also a pre-ordered Banach algebra
as a consequence of Lemma \ref{lem:completion-of-normed-algebra-isbanach-algebra}
and the discussion preceding Definition \ref{def:generalized-Beurling}.
Therefore $\BeurlingTypeAlg$ is a Banach lattice algebra. 
\end{proof}
Through an application of Theorem \ref{thm:Ordered-general-correspondence},
we can now adapt \cite[Theorem 5.20]{CPII} to the ordered context,
and give an explicit description of the positive non-degenerate bounded
representations of pre-ordered generalized Beurling algebras $\BeurlingTypeAlg$
on pre-ordered Banach spaces with closed cones in terms of the positive
non-degenerate continuous covariant representations of $(A,G,\alpha)$
on such spaces, where the group representation is bounded by a multiple
of $\omega$. The result is as follows:
\begin{thm}
\label{thm:continuous-non-deg-covars-are-R-continuous}Let $(A,G,\alpha)$
be a pre-ordered Banach algebra dynamical system, with $A$ having
a closed cone, a \textup{(}not necessarily positive\textup{)} bounded
approximate right identity and a positive bounded approximate left
identity. Let $\alpha$ be uniformly bounded and $\omega$ a weight
on $G$. Then the following maps are mutual inverses between the positive
non-degenerate continuous covariant representations $(\pi,U)$ of
$(A,G,\alpha)$ on a pre-ordered Banach space $X$ with closed cone,
satisfying $\|U_{r}\|\leq C_{U}\omega(r)$ for some $C_{U}\geq0$
and all $r\in G$, and the positive non-degenerate bounded representations
$T:\BeurlingTypeAlg\to B(X)$ of the pre-ordered generalized Beurling
algebra $\BeurlingTypeAlg$ on $X$: 
\[
(\pi,U)\mapsto\left(f\mapsto\int_{G}\pi(f(r))U_{r}\, dr\right)=:T^{(\pi,U)}\quad(f\in C_{c}(G,A)),
\]
determining a positive non-degenerate bounded representation $T^{(\pi,U)}$
of the pre-ordered generalized Beurling algebra $\BeurlingTypeAlg$,
and, 
\[
T\mapsto\left(\begin{array}{l}
a\mapsto\textup{SOT-lim}_{(V,i)}T(z_{V}\otimes au_{i}),\\
s\mapsto\textup{SOT-lim}_{(V,i)}T(z_{V}(s^{-1}\cdot)\otimes u_{i})
\end{array}\right)=:(\pi^{T},U^{T}),
\]
where $\mathcal{Z}$ is a neighbourhood base of $e\in G$, of which
all elements are contained in a fixed compact subset of $G$, $z_{V}\in C_{c}(G,A)$
is chosen such that $z_{V}\geq0$ is supported in $V\in\mathcal{Z}$,
$\int_{G}z_{V}(r)dr=1$, and $(u_{i})$ is any positive bounded approximate
left identity of $A$.

Furthermore, if $A$ has an $M$-bounded \textup{(}not necessarily
positive\textup{)} approximate left identity, then the following
bounds for $T^{(\pi,U)}$ and $(\pi^{T},U^{T})$ hold:
\begin{enumerate}
\item $\|T^{(\pi,U)}\|\leq C_{U}\|\pi\|$,
\item \textup{$\|\pi^{T}\|\leq\left(\inf_{V\in\mathcal{Z}}\sup_{r\in V}\omega(r)\right)\|T\|$,}
\item $\|U_{s}^{T}\|\leq M\left(\inf_{V\in\mathcal{Z}}\sup_{r\in V}\omega(r)\right)\|T\|\omega(s)\quad(s\in G)$.
\end{enumerate}
\end{thm}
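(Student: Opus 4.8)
The plan is to deduce the result from the ordered General Correspondence Theorem (Theorem \ref{thm:Ordered-general-correspondence}) by specializing the class to the singleton $\mathcal{R}:=\{(\tilde{\lambda},\Lambda)\}$ and transporting the correspondence across the bipositive topological isomorphism between $\crossedprod$ and $\BeurlingTypeAlg$ furnished by Theorem \ref{thm:Choosing-R-correctly-Crossed-Products-are-beurling}. First I would check that the hypotheses of Theorem \ref{thm:Ordered-general-correspondence} hold for this $\mathcal{R}$: $A$ has a closed cone and a positive bounded approximate left identity by assumption, while $(\tilde{\lambda},\Lambda)$ is a non-degenerate continuous covariant representation with $\|\Lambda_r\|\leq\omega(r)$ (so $\mathcal{R}$ is uniformly bounded), the non-degeneracy following from the bounded approximate right identity of $A$ as in the discussion following Definition \ref{def:induced_rep_and_translation_rep}. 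Theorem \ref{thm:Ordered-general-correspondence} then yields a bijection $(\pi,U)\mapsto(\pi\rtimes U)^{\mathcal{R}}$ between the positive non-degenerate $\mathcal{R}$-continuous covariant representations and the positive non-degenerate bounded representations of $\crossedprod$; composing with the bipositive isomorphism of Theorem \ref{thm:Choosing-R-correctly-Crossed-Products-are-beurling}, which is the identity on the dense subspace $C_{c}(G,A)$ and hence preserves non-degeneracy, turns $(\pi\rtimes U)^{\mathcal{R}}$ into the integral formula $T^{(\pi,U)}$ and realizes the target as a positive non-degenerate bounded representation of $\BeurlingTypeAlg$.

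The crux is to match the two descriptions of the covariant side, i.e.\ to identify the positive non-degenerate $\mathcal{R}$-continuous covariant representations with those additionally satisfying $\|U_r\|\leq C_U\omega(r)$. For the forward inclusion I would use that $\sigma^{\mathcal{R}}$ is equivalent to $\|\cdot\|_{1,\omega}$ on $C_{c}(G,A)$, as established in the discussion preceding Theorem \ref{thm:Choosing-R-correctly-Crossed-Products-are-beurling}: the estimate $\|\pi\rtimes U(f)\|\leq\|\pi\|\int_{G}\|f(r)\|\|U_r\|\,dr\leq C_U\|\pi\|\,\|f\|_{1,\omega}$ shows that $\pi\rtimes U$ is $\sigma^{\mathcal{R}}$-bounded, so $(\pi,U)$ is $\mathcal{R}$-continuous, and the same estimate gives bound (1). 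The reverse inclusion, the explicit inverse formulas for $(\pi^T,U^T)$, and the bounds (2) and (3) are exactly the content of the unordered statement \cite[Theorem 5.20]{CPII}, whose maps and estimates I would invoke verbatim; in particular the generated group representation $U^T$ obeys the $\omega$-bound (3), so it lands back in the correct class.

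It then remains to track positivity through these maps, which is the genuinely new ingredient. That $T^{(\pi,U)}$ is positive follows from Lemma \ref{lem:integrated-froms-positive} (the integrated form of a positive covariant representation is positive on $C_{c}(G,A)$) together with Lemma \ref{lem:bounded-positive-operator-on-ordered-noremed-space-implies-extension-is-positive} (extension to the completion $\BeurlingTypeAlg$, whose cone is closed). For the inverse, given a positive non-degenerate bounded $T$, I would verify positivity of $(\pi^T,U^T)$ directly from the SOT-limit formulas: for $a\in A_+$ one has $z_V\otimes au_i\in C_{c}(G,A_+)$ since $z_V\geq0$ and $au_i\in A_+$ (using $A_+\cdot A_+\subseteq A_+$ and positivity of $u_i$), so each $T(z_V\otimes au_i)$ is positive and the limit lies in the closed cone $B(X)_+$, whence $\pi^T(a)\in B(X)_+$; likewise $z_V(s^{-1}\cdot)\otimes u_i\in C_{c}(G,A_+)$ yields $U_s^T\in B(X)_+$, and since $(U_s^T)^{-1}=U_{s^{-1}}^T$ is positive by the same argument, $U_s^T\in\mbox{Inv}_+(X)$.

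The main obstacle is conceptual bookkeeping rather than a hard estimate: one must ensure that the bijection of Theorem \ref{thm:Ordered-general-correspondence}, stated for the abstract crossed product, is faithfully transported to $\BeurlingTypeAlg$ by an isomorphism that is merely the identity on $C_{c}(G,A)$, and that the $\omega$-bound characterizing the covariant side is preserved in both directions. All of this is underwritten by the equivalence of $\sigma^{\mathcal{R}}$ and $\|\cdot\|_{1,\omega}$ and by \cite[Theorem 5.20]{CPII}, so the only substantively new work is the positivity verification above, which rests on the closedness of the cones and on the positive approximate identities.
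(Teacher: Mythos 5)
Your proposal is correct and follows essentially the same route as the paper, which states this theorem as an application of Theorem \ref{thm:Ordered-general-correspondence} with $\mathcal{R}=\{(\tilde{\lambda},\Lambda)\}$, transported to $\BeurlingTypeAlg$ via the bipositive topological isomorphism of Theorem \ref{thm:Choosing-R-correctly-Crossed-Products-are-beurling}, and adapted from the unordered \cite[Theorem 5.20]{CPII}. Your explicit bookkeeping --- the norm equivalence of $\sigma^{\mathcal{R}}$ and $\|\cdot\|_{1,\omega}$ to match the two covariant classes, Lemmas \ref{lem:integrated-froms-positive} and \ref{lem:bounded-positive-operator-on-ordered-noremed-space-implies-extension-is-positive} for positivity of $T^{(\pi,U)}$, and the direct SOT-limit argument for positivity of $(\pi^{T},U^{T})$ --- is precisely the content the paper leaves implicit.
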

In the case where $(A,G,\alpha)=(\mathbb{R},G,\textup{triv})$ with
a weight $\omega$ on $G$, by Theorem \ref{thm:order-structures-of-beurling}
we obtain the (here rather obvious fact) fact that the classical Beurling
algebra $L^{1}(G,\omega)$ is a Banach lattice algebra. Furthermore,
Theorem \ref{thm:continuous-non-deg-covars-are-R-continuous} gives
a bijection between the positive strongly continuous group representations
of $G$ on pre-ordered Banach spaces with closed cones that are bounded
by a multiple of $\omega$, and the positive non-degenerate bounded
representations of $L^{1}(G,\omega)$ on such spaces. We hence obtain
the following adaptation of \cite[Corollary 5.22]{CPII} to the ordered
context:
\begin{cor}
\label{cor:classical-L1-result}Let $\omega$ be a weight on $G$.
Let $(z_{V})$ be as in Theorem \ref{thm:continuous-non-deg-covars-are-R-continuous}.
The maps 
\[
U\mapsto\left(f\mapsto\int_{G}f(r)U_{r}\, dr\right)=:T^{U}\quad(f\in C_{c}(G)),
\]
determining a positive non-degenerate bounded representation $T^{U}$
of the ordered Beurling algebra $L^{1}(G,\omega)$, and 
\[
T\mapsto\left(s\mapsto\textup{SOT-lim}_{V}T(z_{V}(s^{-1}\cdot))\right)=:U^{T}
\]
are mutual inverses between the positive strongly continuous group
representations $U$ of $G$ on a pre-ordered Banach space\textup{
$X$} with closed cone, satisfying $\|U_{r}\|\leq C_{U}\omega(r)$,
for some $C_{U}\geq0$ and all $r\in G$, and the positive non-degenerate
bounded representations $T:L^{1}(G,\omega)\to B(X)$ of the ordered
Beurling algebra $L^{1}(G,\omega)$ on $X$\textup{.}

If the weight satisfies $\inf_{W\in\mathcal{Z}}\sup_{r\in W}\omega(r)=1$,
where $\mathcal{Z}$ is a neighbourhood base of $e\in G$, of which
all elements are contained in a fixed compact subset of $G$, then
$\|T^{U}\|=\sup_{r\in G}\|U_{r}\|/\omega(r)$ and $\|U_{r}^{T}\|\leq\|T\|\omega(r)$
for all $r\in G$.
\end{cor}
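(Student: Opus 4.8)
The plan is to obtain Corollary \ref{cor:classical-L1-result} as the specialization of Theorem \ref{thm:continuous-non-deg-covars-are-R-continuous} to the pre-ordered Banach algebra dynamical system $(A,G,\alpha)=(\mathbb{R},G,\textup{triv})$, where $\mathbb{R}$ carries its usual cone $\mathbb{R}_{\geq0}$. First I would verify the hypotheses of that theorem. Since $\mathbb{R}$ is unital with positive identity $1$, the constant net $\{1\}$ is a positive $1$-bounded two-sided approximate identity, so it serves simultaneously as the required positive bounded approximate left identity, as the (not necessarily positive) bounded approximate right identity, and as an $M$-bounded approximate left identity with $M=1$. The trivial action is uniformly bounded with $C_\alpha=1$, and $L^1(G,\mathbb{R},\omega)$ is then a genuine pre-ordered Banach algebra, namely the classical ordered Beurling algebra $L^1(G,\omega)$, whose cone is the closure of $C_c(G,\mathbb{R}_{\geq0})$. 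Note also that $\omega$ never vanishes, since submultiplicativity gives $\omega(s)\omega(s^{-1})\geq\omega(e)\geq1$, so the quotients $\|U_r\|/\omega(r)$ appearing below are well defined.

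The substantive observation is that the $A$-component of every admissible covariant representation collapses. Any representation $\pi\colon\mathbb{R}\to B(X)$ is $\mathbb{R}$-linear and multiplicative, so $\pi(t)=t\,\pi(1)$ with $\pi(1)$ idempotent; non-degeneracy forces the range of the bounded idempotent $\pi(1)$ to be dense, hence (being closed) to be all of $X$, so that $\pi(1)=\textup{id}_X$ and $\pi=\textup{id}$, which is automatically positive. Covariance with respect to $\textup{triv}$ then holds automatically, since $\pi(a)=a\,\textup{id}_X$ is central in $B(X)$ and thus commutes with every $U_s$. Consequently the positive non-degenerate continuous covariant representations $(\pi,U)$ of $(\mathbb{R},G,\textup{triv})$ on $X$ are precisely the pairs $(\textup{id},U)$ with $U$ a positive strongly continuous representation of $G$ on $X$, and the side condition $\|U_r\|\leq C_U\omega(r)$ is transported verbatim under this identification.

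Next I would check that the two maps of Theorem \ref{thm:continuous-non-deg-covars-are-R-continuous} reduce to the stated ones once one takes $u_i=1$. In $C_c(G)=C_c(G,\mathbb{R})$ one has $z_V\otimes u_i=z_V$, and the net indexed by $(V,i)$ collapses to one indexed by $V$ alone. Hence $T^{(\textup{id},U)}(f)=\int_G\pi(f(r))U_r\,dr=\int_G f(r)U_r\,dr=T^U(f)$, while $U^T_s=\textup{SOT-lim}_V T(z_V(s^{-1}\cdot))$ and $\pi^T=\textup{id}$ is recovered and then discarded. The assertion that $U\mapsto T^U$ and $T\mapsto U^T$ are mutual inverses between the two classes of representations is then immediate from Theorem \ref{thm:continuous-non-deg-covars-are-R-continuous}.

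Finally I would derive the norm statements under the hypothesis $\inf_{W\in\mathcal{Z}}\sup_{r\in W}\omega(r)=1$. Bound (1) of Theorem \ref{thm:continuous-non-deg-covars-are-R-continuous} gives $\|T^U\|\leq C_U\|\textup{id}\|=C_U$ for every admissible $C_U$; taking the optimal value $C_U=\sup_{r\in G}\|U_r\|/\omega(r)$ yields $\|T^U\|\leq\sup_{r\in G}\|U_r\|/\omega(r)$. For the reverse inequality, bound (3) with $M=1$ and $\inf_{W}\sup_{r\in W}\omega(r)=1$ gives $\|U^T_r\|\leq\|T\|\omega(r)$ for all $r$ and all $T$; applying this to $T=T^U$ and using $U^{T^U}=U$ from the bijection gives $\|U_r\|\leq\|T^U\|\omega(r)$, whence $\sup_{r\in G}\|U_r\|/\omega(r)\leq\|T^U\|$ and the claimed equality follows. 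The same instance of bound (3) yields $\|U^T_r\|\leq\|T\|\omega(r)$ directly. There is no serious analytic obstacle: the entire argument is a bookkeeping specialization, and the only points demanding genuine care are the collapse of $\pi$ to $\textup{id}$ via non-degeneracy and idempotency, and the correct matching of the approximate-identity nets.
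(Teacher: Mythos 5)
Your proposal is correct and follows essentially the same route as the paper, which obtains this corollary precisely by specializing Theorem \ref{thm:continuous-non-deg-covars-are-R-continuous} to $(A,G,\alpha)=(\mathbb{R},G,\textup{triv})$. The details you supply --- the collapse of $\pi$ to $\textup{id}$ via non-degeneracy and idempotency of $\pi(1)$, the reduction of the nets with $u_{i}=1$, and the derivation of the norm equality from bounds (1) and (3) with $M=1$ --- are exactly the bookkeeping the paper leaves implicit, and they are carried out correctly.
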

As a particular case, the uniformly bounded positive strongly continuous
representations of $G$ on a pre-ordered Banach space $X$ with a
closed cone are in natural bijection with the positive non-degenerate
bounded representations of $L^{1}(G)$ on $X$; this also follows
from \cite[Assertion VI.1.32]{Helemski}.

Finally, we note that \cite[Theorem 8.3]{CPII} gives a bijection
between the non-degenerate bounded anti-representations of $\BeurlingTypeAlg$
on Banach spaces, for a Banach algebra dynamical system $(A,G,\alpha)$
where $A$ has a bounded two-sided approximate identity and $\alpha$
is uniformly bounded, and suitable (not covariant!) pairs $(\pi,U)$
of anti-representations of $A$ and $G$. As done above for \cite[Theorem 5.20]{CPII},
an ordered version can be derived from this, but this is left to the
reader for reasons of space.

\section*{Acknowledgements}

The authors would like to thank Anthony Wickstead for helpful comments
leading to Remark \ref{rem:wickstead}. Messerschmidt's research was
supported by a Vrije Competitie grant of the Netherlands Organisation
for Scientific Research (NWO).

\bibliographystyle{amsplain}
\bibliography{bibliography}
 
\end{document}